\patchcmd{\thebibliography}{\chapter*}{\section*}{}{}
\newcommand*{\mailto}[1]{\href{mailto:#1}{\nolinkurl{#1}}}
\newcommand{\Dx}{{\Delta x}}
\newcommand{\Dt}{{\Delta t}}
\newcommand{\R}{\mathbb{R}}
\renewcommand{\id}{\text{\normalfont{id}}}
\newcommand{\D}{\mathcal{D}}
\newcommand{\F}{\mathcal{F}}
\newcommand{\M}{\mathcal{M}}
\DeclareMathOperator*{\argmin}{arg\,min}
\apptocmd{\lim}{\limits}{}{}
\numberwithin{equation}{section}
{
    \theoremstyle{plain}
    \newtheorem{definition}{Definition}[section]
    \newtheorem{remark}[definition]{Remark}
    \newtheorem{prop}[definition]{Proposition}
    \newtheorem{example}[definition]{Example}

    \newtheorem{theorem}[definition]{Theorem}
    \newtheorem{corollary}[definition]{Corollary}
    \newtheorem{lemma}[definition]{Lemma}   
}
\newcommand{\lref}[1]{\hyperref[{#1}]{\ref*{#1}}}
\title[A numerical view on \fontsize{11pt}{13pt}{$\alpha$}-dissipative solutions]{A numerical view on \fontsize{15pt}{17pt}{$\alpha$}-dissipative solutions of the Hunter--Saxton equation}
\author[T. Christiansen ]{Thomas Christiansen}
\address{Department of Mathematical Sciences\\ NTNU Norwegian University of Science and Technology\\ NO-7491 Trondheim\\ Norway}
\email{\mailto{thomas.christiansen@ntnu.no}}
\urladdr{\url{https://www.ntnu.edu/employees/thomachr}}
\author[K. Grunert]{Katrin Grunert}
\address{Department of Mathematical Sciences\\ NTNU Norwegian University of Science and Technology\\ NO-7491 Trondheim\\ Norway}
\email{\mailto{katrin.grunert@ntnu.no}}
\urladdr{\url{https://www.ntnu.edu/employees/katrin.grunert}}
\thanks{Research supported by the grant {\it Wave Phenomena and Stability --- a Shocking Combination (WaPheS)} from the Research Council of Norway.}  
\subjclass[2020]{Primary: 65M12, 65M25; Secondary: 65M06, 35Q35}
\keywords{Hunter--Saxton equation, $\alpha$-dissipative solutions, numerical method, convergence analysis, minimal time steps}
\begin{document}

\maketitle
\counterwithin{equation}{section}
\raggedbottom
 \allowdisplaybreaks

\begin{abstract}
We propose a new numerical method for $\alpha$-dissipative solutions of the Hunter--Saxton equation, where $\alpha$ belongs to $W^{1, \infty}(\R, [0, 1))$. The method combines a projection operator with a generalized method of characteristics and an iteration scheme, which is based on enforcing minimal time steps whenever breaking times cluster. Numerical examples illustrate that these minimal time steps increase the efficiency of the algorithm substantially. Moreover, convergence of the wave profile is shown in $C([0, T], L^{\infty}(\R))$ for any finite $T \geq 0$.
\end{abstract}

\section{Introduction}
In this paper we derive a novel numerical algorithm for $\alpha$-dissipative solutions to the Cauchy problem of the Hunter--Saxton (HS) equation
\begin{align}
    u_t(t,x) + uu_x(t,x) &= \frac{1}{4}\int_{-\infty}^xu_x^2(t,z)dz - \frac{1}{4}\int_{x}^{\infty}u_x^2(t,z)dz, \quad u|_{t=0} = u_0. 
    \label{eq:Hunter-Saxton}
\end{align}
Originally proposed as an asymptotic model describing nonlinear instabilities in the director field of nematic liquid crystals in \cite{DynamicsDirector}, the equation has been subject to extensive research due to its intriguing properties. A main concern, from a numerical perspective, is the fact that weak solutions in general develop singularities within finite time, causing a loss of uniqueness for weak solutions. 

The formation of singularities is referred to as wave breaking. Any time $t_c$ for which wave breaking takes place, is characterized by the existence of a null set $E \subset \R$ such that for any $x \in E$, $u_x(t, x) \rightarrow -\infty$ pointwise as $t \uparrow t_c$, while $u(t, \cdot)$ remains H{\"o}lder continuous and bounded, and $u_{x}(t, \cdot)$ belongs to $L^2(\R)$, see \cite{DafermosContinuous, AlphaHS}. Furthermore, energy can concentrate on sets of zero measure. Consequently, in order to properly describe the solution $u$ along with its energy, a finite, nonnegative Radon measure $\mu$ is introduced. This measure satisfies $d\mu_{\mathrm{ac}} = u_x^2dx$ and contains all the information about wave breaking. In particular, it is precisely at the times for which $d\mu \neq u_x^2dx$ that energy can concentrate on sets of zero measure.

The task of continuing weak solutions past wave breaking is intricate, and various continuations have been proposed and studied in the literature, see \cite{BressanDissipative,  HSConservative, DafermosCharacteristics, AlphaHS, DynamicsDirector, NonlinearVariational}. A common denominator for these is that they require the energy to be nonincreasing as a function of time, thereby implicitly imposing the measured-valued inequality
\begin{equation}\label{eq:measure_ineq}
	\mu_t  + (u\mu)_x \leq 0.
\end{equation}
The particular case of equality, in which case no energy dissipation takes place,  leads to conservative solutions, for which existence was shown in \cite{HSConservative} and uniqueness in \cite{UniquenessConservative}. On the opposite end of the spectrum, one has the family of solutions experiencing maximal energy dissipation \cite{MaximalDissipation} -- known as dissipative solutions, for which existence and uniqueness were established in \cite{BressanDissipative} and \cite{DafermosCharacteristics}, respectively. In this work on the other hand, we focus on the family of $\alpha$-dissipative solutions, where $\alpha \in W^{1, \infty}(\R, [0, 1))$. To be more precise, if wave breaking occurs at $(t, x) = (t_c, x_c)$, an $\alpha(x_c)$-fraction of the concentrated energy is removed. Thus, the amount of energy removed at any wave breaking depends on the spatial location of its occurrence. Existence of such solutions was established in \cite{AlphaHS}, whilst uniqueness is still an open question.

Over a time interval where no wave breaking takes place, one can apply the method of characteristics to show that the HS equation is formally equivalent to a system of ODEs. This is done in \cite{HSConservative} and \cite{AlphaHS}. However,  at wave breaking, the classical method of characteristics is no longer applicable. To overcome this issue, one introduces a mapping which transforms the Eulerian initial data $(u_0, \mu_0, \nu_0)$, into a tuple $X_0 = (y_0, U_0, V_0, H_0)$ in Lagrangian coordinates, see \cite{HSConservative, UniquenessConservative, AlphaHS, TandyLipschitz}. Here $\nu_0$ is a measure  added exclusively for technical reasons, in the sense that the same solution pair $(u, \mu)(t)$ for $t\geq 0$, is recovered for any choice of $\nu_0$, see \cite[Lem. 2.13]{NewestLipschitzMetric}. Under this coordinate transformation, the HS equation rewrites into a system of ODEs that attains unique, global solutions. In particular, the evolution of the $\alpha$-dissipative solution in Lagrangian coordinates emanating from $X_0$ is governed by, see \cite{AlphaHS}, 
\begin{subequations} 
\label{eq:IntroLagrSys}
\begin{align}
	y_t(t, \xi) &= U(t, \xi), \label{eq:IntroLagrSys1} \\
	U_t(t, \xi) &= \frac{1}{2}V(t, \xi) - \frac{1}{4}V_{\infty}(t),\\
	V(t, \xi) &= \int_{-\infty}^{\xi} \left(1 - \alpha(y(\tau(\eta), \eta))\chi_{\{\omega: t \geq \tau(\omega) > 0\}}(\eta) \right)\!V_{0, \xi}(\eta)d\eta,  \label{eq:IntroLagrSys3}\\
	H_t(t, \xi) &= 0. 
\end{align}
\end{subequations}
Here $\displaystyle V_{\infty}(t) =\lim_{\xi\to\infty} V(t, \xi)$ is the total cumulative energy in the system at time $t$ and $\tau: \R \rightarrow [0, \infty]$ is the wave breaking function given by 
\begin{align}\label{eq:IntroWaveFunc}
	\tau(\xi) &= \begin{cases}
		0, & U_{0, \xi}(\xi) = y_{0, \xi}(\xi) = 0, \\
		- \frac{2y_{0, \xi}}{U_{0, \xi}}(\xi), & U_{0, \xi}(\xi) < 0, \\
		\infty, & \text{otherwise}. 
	\end{cases}
\end{align}
Note, the subindex $0$ refers to the initial data, whereas a subindex $\xi$ or $t$ refers to partial derivatives with respect to $\xi$ or $t$, respectively, for instance, $y_{0, \xi}(\xi) = \partial_{\xi}y_0(\xi)$. We will stick with this notation throughout the paper. 

One way to physically interpret the Lagrangian quantities is to consider $\xi$ as a variable identifying a particle. Then $y(t, \xi)$ represents the particle trajectory emanating from $y_0(\xi)$, and $\tau(\xi)$ the collision time of the particle. This way, $\tau(\xi)=\infty$ indicates that the particle does not participate in any collision. Furthermore, $U(t, \xi) = u(t, y(t, \xi))$ represents the particle velocity, while $V_{\xi}(t, \xi)$ is its energy. Finally, $H(t, \xi)$ is a conserved quantity stemming from $\nu$, lacking an obvious physical interpretation.  

Naturally, there has been proposed a variety of numerical methods for the HS equation. A collection of finite differences schemes of upwind-type were proved to converge towards dissipative solutions in \cite{FDDissipative}.  In \cite{DGMethod1} and \cite{DGMethod2} two discontinuous Galerkin methods for conservative and dissipative solutions, respectively, were introduced. The latter one turns out to be equivalent with a finite-difference scheme from \cite{FDDissipative} when choosing piecewise constant DG-elements, and it therefore converges. 
A convergent scheme of Godunov-type tailored to conservative solutions was introduced in \cite{NumericalConservative}. In addition, this scheme was shown to converge in $L^{\infty}(\mathbb{R})$ with order a half prior to wave breaking. Moreover, both multi-symplectic and Hamiltonian-preserving discretizations were proposed in \cite{GeometricInt}, albeit without any proof of convergence. 

One drawback with the aforementioned methods, is that they are all tailored to one particular class of weak solutions; either conservative or dissipative. More recently a convergent numerical method for $\alpha$-dissipative solutions was proposed in \cite{AlphaAlgorithm}, where $\alpha \in [0, 1]$ is held fixed. A major benefit of this scheme is its ability to continuously interpolate between conservative ($\alpha=0$) and dissipative ($\alpha=1$) solutions. In other words, it provides a numerical framework for uniformly treating solutions with nonincreasing energy. 

The method in \cite{AlphaAlgorithm} relies on the observation that if the initial data $u_0$ for \eqref{eq:Hunter-Saxton} is piecewise linear, the solution $u(t, \cdot)$ remains piecewise linear for $t\geq 0$, see \cite{PropertiesHS}. Taking advantage of this fact, the scheme combines a novel piecewise linear projection operator, which preserves some of the essential underlying structure of solutions to \eqref{eq:Hunter-Saxton}, with exact evolution via the generalized method of characteristics, i.e., according to \eqref{eq:IntroLagrSys}. The algorithm presented here can be viewed as an extension of the one from \cite{AlphaAlgorithm}, as it is applicable to a larger class of $\alpha$-dissipative solutions, but it is still based on the same underlying idea. However, some new challenges appear due to $\alpha \in W^{1, \infty}(\R, [0, 1))$.

Firstly, the amount of energy to be removed at any wave breaking occurrence cannot be computed a priori, cf. \eqref{eq:IntroLagrSys3}. In particular, if the solution experiences wave breaking along the characteristic labeled by $\xi$, then the fraction of removed energy is $\alpha(y(\tau(\xi), \xi))$. However, the time evolution of $y(t, \xi)$ depends heavily on $V_{\xi}(t, \cdot)$ on all of $\R$, see \eqref{eq:IntroLagrSys1}. Consequently, the value of $y(\tau(\xi), \xi)$ depends on all other wave breaking occurrences taking place prior to $t=\tau(\xi)$. To resolve this issue, we introduce an iteration scheme, inspired by \cite[Lem. 2.3]{AlphaHS}, which is based on computing successive approximations of the energy to be removed at wave breaking.

A second challenge is posed by the possibility of accumulating wave breaking times. This was not a problem in  \cite{AlphaAlgorithm}, because when $\alpha$ is constant, the time evolution of $X_{\xi}(t) = (y_{\xi}, U_{\xi}, V_{\xi}, H_{\xi})(t)$ is given by a closed system of ODEs.  However, as already observed, $V_{\xi}(t, \xi)$ and $y(\tau(\xi), \xi)$ are closely intertwined for  $\alpha \in W^{1, \infty}(\R, [0, 1))$, cf. \eqref{eq:IntroLagrSys3}. Thus, the differentiated Lagrangian variables can be evolved between successive breaking times, but in order to determine the correct amount of energy to be removed at wave breaking, one needs to compute $y(t, \cdot)$. To avoid the risk of having to compute $y(t, \cdot)$ repeatedly at nearly identical wave breaking times, we extract a finite sequence of breaking times $\{\tau_k^*\}$, which acts as a non-uniform temporal discretization of $[0, T]$, for any finite $T \geq 0$. The proposed numerical scheme then evolves the differentiated Lagrangian variables, with the iteration scheme, between successive times in this extracted sequence $\{\tau_k^*\}$ until the final time $T$ is reached.

The content of this paper is organized as follows. In Section~\ref{sec:Background} we present the generalized method of characteristics based on \cite{HSConservative, AlphaHS}, before we in Section~\ref{sec:NumericalImplementation} recall the projection operator from \cite{AlphaAlgorithm} and derive the numerical algorithm. This derivation consists of: motivating the minimal time evolution criterion, presenting the iteration scheme, and discussing the numerical implementation.  In addition, we show that the proposed iteration scheme terminates and derive a uniform bound on the number of iterates. Section~\ref{sec:ConvergenceMethod} is devoted to proving convergence for any $t\in [0, T]$. This is done in two steps. First we equip the set of Lagrangian coordinates by a metric introduced in \cite{AlphaHS} which allows us to analyze the error introduced by the projection operator. We thereafter introduce a function, inspired by the metric from \cite{NewestLipschitzMetric}, which provides an upper bound on the metric from \cite{AlphaHS} and allows us to estimate the error introduced by the iteration scheme. Finally, in Section~\ref{sec:NumericalExperiments} we complement the numerical analysis by applying the method on two examples to illustrate the effect of the minimal time step.

\section{$\alpha$-dissipative solutions and their stability}\label{sec:Background}
In this section we outline the construction of $\alpha$-dissipative solutions via the generalized method of characteristics from  \cite{HSConservative} and \cite{AlphaHS}. We restrict our attention to the sets of Eulerian and Lagrangian coordinates, relevant mappings between these sets and a discussion about the time evolution in Lagrangian coordinates. In addition, we recall the definition of a metric from \cite{AlphaHS}, which renders $\alpha$-dissipative solutions stable with respect to the initial data in Lagrangian coordinates. 

Let $\alpha \in W^{1, \infty}(\R, [0, 1))$ be fixed and denote by $\M^{+}(\R)$ the space of positive, finite Radon measures on $\R$. To introduce the set of Eulerian coordinates, we need to recall some function spaces from \cite{HSConservative} and \cite{AlphaHS}. 

We start by defining the Banach space
\begin{equation*}
    E := \left \{ f \in L^{\infty}(\R): f' \in L^2(\R) \right \}\!,
\end{equation*}
equipped with the norm 
\begin{equation*}
    \|f \|_{E} := \|f \|_{\infty} + \|f' \|_{2}.
\end{equation*}
Moreover, let
\begin{align*}
    H_d^1(\R) &:= H^1(\R) \times \R^d, \quad d = 1, 2,
\end{align*}
and choose a partition of unity $\chi^+$ and $\chi^-$ on $\R=(-\infty, 1) \cup (-1, \infty)$, i.e., a pair of functions $\chi^+, \chi^{-} \in C^{\infty}(\R)$ satisfying 
\begin{itemize}
    \item $\chi^+ + \chi^- = 1$,
    \item $0 \leq \chi^{\pm} \leq 1$,
    \item $\text{supp}(\chi^+) \subset (-1, \infty)$ and $\text{supp}(\chi^-) \subset (-\infty, 1)$.
\end{itemize}
This allows us to define the following linear, continuous, and injective mappings 
\begin{align*}
    R_1: H_1^1(\R) \rightarrow E, & \qquad  (\bar{f}, a) \mapsto f= \bar{f} + a \chi^+\!, \\
    R_2: H_2^1(\R) \rightarrow E, & \qquad  (\bar{f}, a, b) \mapsto f= \bar{f} + a \chi^+ + b \chi^-\!.
\end{align*} 
Accordingly, we introduce the Banach spaces $E_1$ and $E_2$ as the images of $H_1^1(\R)$ and $H_2^1(\R)$, respectively, i.e., 
\begin{align*}
    E_1 := R_1(H_1^1(\R))  \quad \text{and} \quad 
    E_2 := R_2(H_2^1(\R)), 
\end{align*}
endowed with the following norms
\begin{align*}
    \|f\|_{E_1} &:= \|\bar{f} + a \chi^+ \|_{E_1} =  \left( \|\bar{f}\|_{H^1(\R)}^2 + a^2 \right )^{\frac{1}{2}} \!\!, \\
    \|f\|_{E_2} &:= \|\bar{f} + a \chi^+ + b\chi^- \|_{E_2} = \left( \|\bar{f}\|_{H^1(\R)}^2 + a^2 + b^2 \right )^{\frac{1}{2}} \!\!.
\end{align*}
By construction, the spaces $E_1$ and $E_2$ do not rely on the particularly chosen partition of unity, see \cite{GlobalDissipative2CH}. Furthermore, the mapping $R_1$ is also well-defined when applied to functions in $L_1^2(\R) = L^2(\R) \times \R$. Consequently, let
\begin{equation*}
    E_1^0 := R_1(L_1^2(\R)),
\end{equation*}
and equip this space with the norm 
\begin{equation*}
    \|f \|_{E_1^0} := \|\bar{f} + a \chi^+ \|_{E_1^0} = \left( \|\bar{f} \|_2^2 + a^2 \right)^{\frac{1}{2}} \!.
\end{equation*}
Finally, we can define the set of Eulerian coordinates $\D^{\alpha}$.

\begin{definition}\label{def:EulSet}
   The space $\D^{\alpha}$ is composed of all triplets $(u, \mu, \nu)$ such that
    \begin{enumerate}[label=(\roman*)]
        \item $u \in E_2$,
        \item $\mu \leq \nu \in \M^{+}(\R)$, \label{def:EulSet2}
        \item $\mu_{\mathrm{ac}} \leq \nu_{\mathrm{ac}}$,
        \item $d\mu_{\mathrm{ac}} = u_x^2dx$,
        \item $\mu( (-\infty, \cdot)) \in E_1^0$,
        \item $\nu((-\infty, \cdot))  \in E_1^0$,
        \item $\frac{d\mu}{d\nu}(x) > 0$, and $\frac{d\mu_{\mathrm{ac}}}{d\nu_{\mathrm{ac}}}(x) = 1$ if $u_x(x) < 0$. 
    \end{enumerate}
\end{definition}
Since $\mu, \nu \in \M^+(\R)$ we can introduce the primitive functions $F(\cdot) = \mu((-\infty, \cdot))$ and $G(\cdot) = \nu((-\infty, \cdot))$. These are bounded, left-continuous, increasing, and satisfy
\begin{equation*}
    \lim_{x \rightarrow - \infty} F(x) = \lim_{x \rightarrow -\infty} G(x) = 0. 
\end{equation*}
\vspace{-0.05cm}
By \cite[Thm. 1.16]{RealAnalysisFolland} there is a one-to-one relationship between $(F, G)$ and $(\mu, \nu)$. We will therefore interchangeably use the notation $(u, F, G)$ and $(u, \mu, \nu)$ to refer to the same triplet in $\D^{\alpha}$. Furthermore, $F$ admits a decomposition into an absolutely continuous part, $F_{\mathrm{ac}}(x)\! =\! \mu_{\mathrm{ac}}((-\infty, x))$, and a singular part, $F_{\mathrm{sing}}(x)\! =\! \mu_{\mathrm{sing}}((-\infty, x))$, i.e., 
\begin{equation*}
	F(x) = F_{\mathrm{ac}}(x) + F_{\mathrm{sing}}(x). 
\end{equation*}
A similar decomposition exists for $G$ as well. 

Before we introduce the set of Lagrangian coordinates,  $\F^{\alpha}$, let $B:=E_2 \times E_2 \times E_1 \times E_1$, equipped with the norm 
\begin{equation*}
   \|(f_1, f_2, f_3, f_4) \|_B := \|f_1 \|_{E_2} + \|f_2 \|_{E_2} + \|f_3 \|_{E_1} + \|f_4 \|_{E_1} \!.
\end{equation*}

\begin{definition}\label{def:LagSet} The set $\F^{\alpha}$ contains all quadruplets $X = (y, U, V, H)$ with \phantom{a} $(y-\id, U, V, H) \in B$ satisfying
\begin{enumerate}[label=(\roman*)]
  \item $(y-\id, U, V, H) \in \big[W^{1, \infty}(\R) \big]^4$,
  \item $ y_{\xi}, H_{\xi} \geq 0$ and there exists $c > 0$ such that $y_{\xi} + H_{\xi} \geq c $ a.e.,
  \item \label{def:importantRelation} $y_{\xi}V_{\xi}= U_{\xi}^2 $ a.e., 
  \item $0 \leq V_{\xi} \leq H_{\xi}$ a.e., 
  \item There exists $\kappa: \R \rightarrow (0, 1]$ such that $V_{\xi}(\xi) = \kappa(y(\xi))H_{\xi}(\xi)$ a.e., with $\kappa(y(\xi))=1$ whenever $U_{\xi}(\xi) < 0$. 
\end{enumerate}
\end{definition}
 The following subsets of $\F^{\alpha}$ will play an important role in the upcoming convergence analysis, 
\begin{align*}
    \F^{\alpha, 0} &= \{X \in \F^{\alpha}: H(\xi) = V(\xi) \text{ for all }\xi \in \R \}, 
    \\
    \F_0^{\alpha} &= \{ X \in \F^{\alpha}: y + H = \id \}, 
\end{align*}
and the intersection of these two  
\begin{equation*}
    \F_0^{\alpha, 0} = \F_0^{\alpha} \cap \F^{\alpha, 0}\!.
\end{equation*} 

The construction of $\alpha$-dissipative solutions via a generalized method of characteristics is based on examining the time evolution in Lagrangian coordinates rather than in Eulerian coordinates. Thus, the mappings between $\D^{\alpha}$ and $\F^{\alpha}$ are essential.

\begin{definition}\label{def:MapL}Let $L: \D^{\alpha} \rightarrow \F_0^{\alpha}$ be defined by $L((u, \mu, \nu)) = (y, U, V, H)$, where
\begin{subequations}
\begin{align}
    y(\xi) &= \sup \{x \in \R: x + \nu((-\infty, x)) < \xi \}, \label{eq:L_eq1}\\
    U(\xi) &= u (y(\xi)), \label{eq:L_eq2} \\
    H(\xi) &= \xi - y(\xi), \label{eq:L_eq3} \\
    V(\xi) &= \int_{-\infty}^{\xi} \frac{d\mu}{d\nu} (y(\eta)) H_{\xi}(\eta) d\eta \label{eq:L_eq4}. 
\end{align}
\end{subequations}
\end{definition}

\begin{definition}\label{def:MapM}
    Define $M:\F^{\alpha} \rightarrow \D^{\alpha}$ by $M((y, U, V, H)) = (u, \mu, \nu)$, where 
    \begin{subequations}
        \begin{align}
            u(x) &= U(\xi) \quad \text{ for any } \xi \in \R \text{ such that } x = y(\xi), \\ 
            \mu &= y_{\#} \! \left (V_{\xi}d\xi \right)\!, \\
            \nu &= y_{\#}(H_{\xi}d\xi).
        \end{align}
    \end{subequations}
   Here $y_{\#}(V_{\xi}d\xi)$ denotes the pushforward of the measure $V_{\xi}d\xi$ by the function $y$. 
\end{definition}

Well-definedness of these transformations follows from \cite[Prop. 2.1.5 and 2.1.7]{PhdThesisNordli}. Furthermore, we emphasize that triplets $(u, \mu, \nu)$ are mapped to quadruplets $(y, U, V, H)$. Hence there cannot be a one-to-one correspondence between Eulerian and Lagrangian coordinates. However, one can identify equivalence classes in Lagrangian coordinates, such that each equivalence class corresponds to exactly one triplet in Eulerian coordinates, see \cite{PhdThesisNordli}. 

Moreover, it should be pointed out that all the important information about the solution in Eulerian coordinates is contained in $(u, \mu)$, and therefore encoded in the triplet $(y, U, V)$ in Lagrangian coordinates. Despite this, $\nu$ heavily influences the mapping $L$, in the sense that changing $\nu$ does not only change $H$, but also $(y, U, V)$. 

Up to this point the formalism has been stationary, transforming back and forth between Eulerian and Lagrangian coordinates. Next, we turn our attention to the time evolution. 

The $\alpha$-dissipative solution in Lagrangian coordinates, $X(t) = (y, U, V, H)(t)$, with initial data $X(0) = X_0 \in \F^{\alpha}$, is the unique solution to the following system of differential equations 
\begin{subequations}
\label{eq:intLagrSystem}
\begin{align}
    y_t(t, \xi) &= U(t, \xi), \label{eq:int_ODE1} 
    \\
    U_t(t, \xi) &= \frac{1}{2}V(t, \xi) - \frac{1}{4}V_{\infty}(t), \label{eq:int_ODE2} 
    \\
    V(t, \xi) &= \int_{-\infty}^{\xi}\!\left(1 - \alpha(y(\tau(\eta), \eta)) \chi_{\{\omega: t \geq \tau(\omega) > 0\}}(\eta) \right)\! V_{0, \xi}(\eta) d\eta, \label{eq:int_ODE3} 
    \\
    H_t(t, \xi) &= 0. \label{eq:int_ODE4}
\end{align}
\end{subequations}
Here  $\tau: \R \rightarrow [0, \infty]$ is the wave breaking function given by
\begin{align}\label{eq:waveBreakingFunc}
    \tau(\xi) &= \begin{cases}
     0, & \text{if }y_{0, \xi}(\xi) = U_{0, \xi}(\xi) = 0, \\ 
    - \frac{2y_{0, \xi}(\xi)}{U_{0, \xi}(\xi)}, & \text{if } U_{0, \xi}(\xi) < 0,\\
    \infty, & \text{otherwise},
    \end{cases}
\end{align}
and \small$V_{\infty}(t) = \lim_{\xi \rightarrow \infty}V(t, \xi)$ \normalsize is the total Lagrangian energy at time $t$. For later use, we also introduce \small $H_{\infty} = \lim_{\xi \rightarrow \infty}H(t, \xi) =  \lim_{\xi \rightarrow \infty} H_0(\xi)$\normalsize. 

Note that the right-hand side of the system \eqref{eq:intLagrSystem} becomes discontinuous at fixed times given by the function \eqref{eq:waveBreakingFunc}. Nevertheless, existence and uniqueness of solutions is shown in \cite[Lem. 2.3]{AlphaHS} through a standard fixed point argument. 

Based on \eqref{eq:intLagrSystem} we can define the solution operator $S_t$.

\begin{definition}\label{def:SolOPLagr}
    For any $t \geq 0$ and $X_0 \in \F^{\alpha}$, define $S_t(X_0) = X(t)$, where $X(t)$ denotes the unique $\alpha$-dissipative solution to \eqref{eq:intLagrSystem} with initial data $X(0) = X_0$. 
\end{definition}

Finally, we obtain the $\alpha$-dissipative solution in Eulerian coordinates by combining the solution operator $S_t$ with the mappings $L$ and $M$. 
\begin{definition}\label{def:AlphaSol}
  For $(u_0, \mu_0, \nu_0) \in \D^{\alpha}$, the $\alpha$-dissipative solution at time $t\geq 0$ is given by 
    \begin{equation*}
        (u, \mu, \nu)(t) = T_t((u_0, \mu_0, \nu_0)) = M \circ S_t \circ L ((u_0, \mu_0, \nu_0)).
    \end{equation*}
\end{definition}

As mentioned  earlier, $L((u, \mu, \nu))$ is dependent on the choice of $\nu$. In spite of that, it has been established in \cite[Lem. 2.13]{NewestLipschitzMetric} that the choice of $\nu$ has no influence on $(u, \mu)$ in the following sense: Given $(u_{\mathrm{A}, 0}, \mu_{\mathrm{A}, 0}, \nu_{\mathrm{A}, 0})$ and $(u_{\mathrm{B}, 0}, \mu_{\mathrm{B}, 0}, \nu_{\mathrm{B}, 0})$ in $ \D^{\alpha}$ such that
\begin{equation*}
    u_{\mathrm{A}, 0} = u_{\mathrm{B}, 0} \hspace{0.3cm} \text{and} \hspace{0.3cm} \mu_{\mathrm{A}, 0} = \mu_{\mathrm{B}, 0}, 
\end{equation*}
it then holds that
\begin{equation*}
    u_{\mathrm{A}}(t) = u_{\mathrm{B}}(t) \hspace{0.3cm} \text{and} \hspace{0.3cm} \mu_{\mathrm{A}}(t) = \mu_{\mathrm{B}}(t) \hspace{0.3cm} \text{for all } t \geq 0. 
\end{equation*}
Therefore, we will from now on restrict ourselves to initial data $(u_0, \mu_0, \nu_0)$ belonging to the subset 
\begin{equation}\label{eq:SubsetD}
    \D_0^{\alpha} := \{(u, \mu, \nu) \in \D^{\alpha}: \mu = \nu \} \subset \D^{\alpha} \!. 
\end{equation}

To simplify the upcoming convergence analysis, we equip the set $\F^{\alpha}$ with a metric rendering $\alpha$-dissipative solutions in Lagrangian coordinates stable with respect to the Lagrangian initial data. Various metrics have been introduced for this very purpose, see \cite{AlphaHS} and \cite{NewestLipschitzMetric}. In our setting, it turns out to be most convenient to work with the metric $d$ presented in \cite[Def. 4.6]{AlphaHS}. This metric relies on a function, $g_1(X)$, which models the energy loss at wave breaking continuously in time, in contrast to the actual energy density $V_{\xi}$, which may drop abruptly. It is defined by 
 \begin{align}\label{eq:g}
        g_1(X)(\xi) &:= \begin{cases}
        (1 - \alpha(y(\xi)))V_{\xi}(\xi), & \xi \in \Omega_d(X), \\
        V_{\xi}(\xi), & \xi \in \Omega_c(X),
     \end{cases}
    \end{align}
 where 
\begin{align}\label{def:omegas}
    \Omega_d \left(X \right) &:= \left \{ \xi \in \R \!: U_{\xi}(\xi) < 0 \right\} \!, \notag 
    \\
    \Omega_c(X) &:= \{\xi \in \R \!: U_{\xi}(\xi) \geq 0 \}. 
\end{align}
Here $\Omega_d$ and $\Omega_c$ split $\R$ into a set of points where the $\alpha$-dissipative solution will eventually experience wave breaking,  $\Omega_d$, and its complement, $\Omega_c$, containing all points where no wave breaking takes place in the future. 
   
 Furthermore, the metric also involves the two additional functions 
\begin{align}
    g_2(X)(\xi) &:= \begin{cases}
    \|\alpha ' \|_{\infty} H_{\infty}U_{\xi}(\xi), & \xi \in \Omega_d(X), \\
    0, & \xi \in \Omega_c(X), \end{cases} \label{eq:g_2}\\
    g_3(X)(\xi) &:= \begin{cases}
    \|\alpha ' \|_{\infty}UU_{\xi}(\xi), & \xi \in \Omega_d(X), \\
    0, & \xi \in \Omega_c(X). \end{cases} \label{eq:g_3}
\end{align}

\begin{definition}\label{def:metric}
    Define $d: \F^{\alpha} \times \F^{\alpha} \rightarrow [0, \infty)$ by  
    \begin{align*}
        d(X, \hat{X}) &= \|y - \hat{y} \|_{\infty} + \|U-\hat{U}\|_{\infty} +  \|H_{\xi} - \hat{H}_{\xi} \|_1 + \|\alpha ' \|_{\infty}\|UH_{\xi} - \hat{U}\hat{H}_{\xi} \|_2 \\
        & \quad + \|y_{\xi} - \hat{y}_{\xi}\|_2 + \|U_{\xi} - \hat{U}_{\xi}\|_2  + \|g_1(X) + y_{\xi} - g_1(\hat{X}) - \hat{y}_{\xi} \|_2 \\
        & \quad  + \|H_{\xi} - \hat{H}_{\xi}\|_2 + \|g_2(X) - g_2(\hat{X}) \|_2 + \|g_3(X) - g_3(\hat{X}) \|_2.
    \end{align*}
\end{definition}

Although originally introduced as a preliminary metric, since it separates Lagrangian coordinates belonging to the same equivalence class, this metric is well-suited for our analysis due to the following stability estimate.

\begin{theorem}[{\cite[Thm. 4.18]{AlphaHS}}] \label{thm:Lipschitz} Let $X_0\! \in\! \F^{\alpha, 0}$ and $\hat{X}_0 \!\in \!\F^{\alpha, 0}_0$. Furthermore, assume that $\max(H_{\infty}, \hat H_{\infty})\leq M$ and  $t \geq 0$, then
	\begin{equation}\label{eq:LipschitzEst}
	d \left (S_t(X_0), S_t(\hat{X}_0) \right) \leq C(t)e^{D(t)t}d(X_0, \hat{X}_0),
	\end{equation}
	where $C(t) = C(t, M, \|\alpha '\|_{\infty})$ and $D(t) = D(t, M, \|\alpha '\|_{\infty})$ are given by 
	\begin{align}
	C(t) &= 3 + \frac{3}{2}t + \frac{1}{2}t^2 + \frac{3}{16}t^3 + \sqrt{M} \bigg(1 + \frac{1}{4}t + \frac{1}{4}t^2 + \frac{1}{16}t^3 \bigg) \nonumber
	\\ & \quad + \|\alpha ' \|_{\infty} \sqrt{M} \bigg(5 + 2t + t^2 + \frac{3}{8}t^3 \bigg)  +  \|\alpha ' \|_{\infty} M \bigg(3 + \frac{5}{4}t + \frac{1}{2}t^2 + \frac{1}{8}t^3 \bigg), \label{eq:func_C}\raisetag{-20pt}\\
	D(t) &= 2 + \|\alpha '\|_{\infty}\sqrt{M} + \sqrt{M} \bigg(\frac{1}{2} + \frac{1}{8}t + \frac{1}{16}t^2 \bigg)  + \|\alpha '\|_{\infty}M \bigg(1 + \frac{1}{4}t + \frac{1}{8}t^2 \bigg). \nonumber
	\end{align}
\end{theorem}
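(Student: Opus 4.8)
The plan is to establish the Lipschitz estimate \eqref{eq:LipschitzEst} by differentiating the ten constituent terms of $d(S_t(X_0), S_t(\hat X_0))$ in time and bounding the resulting expression by a constant multiple of $d(S_t(X_0), S_t(\hat X_0))$ itself, so that Gr\"onwall's inequality yields exponential growth. Write $X(t) = S_t(X_0)$ and $\hat X(t) = S_t(\hat X_0)$, and let $\delta(t) = d(X(t), \hat X(t))$. Since the right-hand side of \eqref{eq:intLagrSystem} is discontinuous only at the (fixed, initial-data-determined) wave breaking times given by \eqref{eq:waveBreakingFunc}, I would first argue that on each open interval between consecutive breaking times the Lagrangian variables are genuinely differentiable in $t$, and that $\delta$ is absolutely continuous with at worst downward jumps at breaking times (energy is only removed there), so the differential inequality proved on the open intervals suffices. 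The key structural facts I will lean on are: $y_t = U$, $U_t = \tfrac12 V - \tfrac14 V_\infty$, $H_t = 0$, and from \eqref{eq:int_ODE3} that $V_\xi(t,\xi) = (1 - \alpha(y(\tau(\eta),\eta))\chi_{\{t\ge\tau>0\}})V_{0,\xi}$, whence $V_{\xi,t} = 0$ except at breaking times; differentiating the relation $y_\xi V_\xi = U_\xi^2$ in \ref{def:importantRelation} gives $y_{\xi,t} V_\xi + y_\xi V_{\xi,t} = 2 U_\xi U_{\xi,t}$, i.e. $y_{\xi,t} = 2U_\xi U_{\xi,t}/V_\xi$ when $V_\xi > 0$, which on $\Omega_d$ (where $\kappa \equiv 1$) is controlled.

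The main work is term-by-term. For $\|y - \hat y\|_\infty$: $\frac{d}{dt}(y-\hat y) = U - \hat U$, so this is bounded by $\|U - \hat U\|_\infty \le \delta$. For $\|U - \hat U\|_\infty$: $\frac{d}{dt}(U - \hat U) = \tfrac12(V - \hat V) - \tfrac14(V_\infty - \hat V_\infty)$; here I bound $|V - \hat V|(t,\xi)$ by $\int_{-\infty}^\xi |V_\xi - \hat V_\xi| d\eta$, and the point is to re-express $V_\xi - \hat V_\xi$ through the quantity $g_1 + y_\xi$ (which appears in $d$) plus the $\alpha$-difference terms, exploiting that $g_1(X) = (1-\alpha(y))V_\xi$ on $\Omega_d$ and $= V_\xi$ on $\Omega_c$, together with $y_\xi = V_\xi \cdot (\text{something})$ via \ref{def:importantRelation}; this is exactly why the metric bundles $g_1 + y_\xi$ rather than $V_\xi$ alone. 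The genuinely delicate estimates are the ones controlling $\|\alpha'\|_\infty \|UH_\xi - \hat U\hat H_\xi\|_2$, $\|g_2(X) - g_2(\hat X)\|_2$ and $\|g_3(X) - g_3(\hat X)\|_2$: here one must differentiate $g_2 = \|\alpha'\|_\infty H_\infty U_\xi$ and $g_3 = \|\alpha'\|_\infty U U_\xi$ on $\Omega_d$, using $H_t = 0$ (so $H_\infty$ is constant), $U_t = \tfrac12 V - \tfrac14 V_\infty$, and the above expression for $U_{\xi,t}$; the factor $\|\alpha'\|_\infty$ and the bound $H_\infty \le M$ are what make the constants $C(t), D(t)$ carry their specific polynomial-in-$t$, $\sqrt M$ and $M$ dependence. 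I would also need that $\Omega_d$ and $\Omega_c$ are time-independent (they are, since $U_{0,\xi}$ determines them via $\tau$), so the case split in $g_1, g_2, g_3$ does not itself contribute jump terms.

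Collecting all ten derivative bounds produces $\frac{d}{dt}\delta(t) \le D(t)\,\delta(t) + (\text{lower-order}) $ where the lower-order piece is absorbed, and an integrating-factor/Gr\"onwall argument on $[0,t]$ gives $\delta(t) \le e^{\int_0^t D(s)\,ds}\,\delta(0)$; tracking the polynomial coefficients through the integration yields the prefactor $C(t)$ and the stated form $C(t)e^{D(t)t}$ (the non-sharp bundling of $\int_0^t D$ into $D(t)\,t$). The hypotheses $X_0 \in \F^{\alpha,0}$ and $\hat X_0 \in \F^{\alpha,0}_0$ enter to guarantee $V_\xi = H_\xi$ initially (so $\kappa \equiv 1$, removing the $\kappa$-ambiguity in \eqref{eq:L_eq4}-type relations) and to pin down $\hat y + \hat H = \id$, which normalizes $\hat H_\xi$ and lets several mixed terms telescope. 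The principal obstacle is bookkeeping: there are many cross terms, and the challenge is organizing the $\Omega_d$/$\Omega_c$ case analysis and the repeated use of \ref{def:importantRelation} so that every term on the right is visibly one of the ten summands of $d$ (or $\sqrt M$ / $M$ times one of them), rather than a new uncontrolled quantity — this is precisely the design criterion behind the somewhat baroque definition of $d$, and I expect the proof in \cite{AlphaHS} to proceed by exactly this route.
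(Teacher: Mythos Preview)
The paper does not prove this theorem; it is quoted as \cite[Thm.~4.18]{AlphaHS} and used as a black box, so there is no in-paper proof to compare against. Your overall Gr\"onwall strategy is indeed the route taken in \cite{AlphaHS}, but two points in your sketch are genuinely wrong and would block the argument as written.

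First, the sets $\Omega_d(X(t))$ and $\Omega_c(X(t))$ are \emph{not} time-independent: by \eqref{def:omegas} they are defined through $U_\xi(t,\cdot)$, and a point $\xi$ with $0<\tau(\xi)<\infty$ lies in $\Omega_d(X(t))$ for $t<\tau(\xi)$ and in $\Omega_c(X(t))$ for $t\ge\tau(\xi)$ (since $U_\xi$ passes through zero at breaking and becomes nonnegative thereafter). The continuity of the $g_i$ across breaking is engineered ($g_1$ via the prefactor $1-\alpha(y)$, and $g_2,g_3$ because $U_\xi\to 0$ at breaking), so no upward jumps occur---but this must be checked, not assumed away. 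More seriously, $\Omega_d(X(t))$ and $\Omega_d(\hat X(t))$ differ in general because $\tau$ and $\hat\tau$ are determined by different initial data; the four-way decomposition $\Omega_k(X(t))\cap\Omega_m(\hat X(t))$, $k,m\in\{c,d\}$, is where the real estimates live, and it is exactly to close those mixed cases that the metric carries $g_2$, $g_3$, and $\|\alpha'\|_\infty\|UH_\xi-\hat U\hat H_\xi\|_2$. Your sketch does not engage with this.

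Second, your derivation of $y_{\xi,t}$ via the constraint $y_\xi V_\xi=U_\xi^2$ is unnecessary and wrong: differentiating $y_t=U$ in $\xi$ gives $y_{\xi,t}=U_\xi$ directly, and likewise $U_{\xi,t}=\tfrac12 V_\xi$ from \eqref{eq:int_ODE2}. The constraint in Definition~\ref{def:LagSet}\ref{def:importantRelation} is algebraic, used to bound quantities like $\|U_\xi\|_2$ in terms of energy, not to compute time derivatives.
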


\section{The numerical algorithm}\label{sec:NumericalImplementation}
This section is devoted to the derivation of the numerical algorithm.  As mentioned in the introduction, the energy loss at any wave breaking occurrence is unknown a prior.  To overcome this obstacle we combine an iteration scheme, producing approximations of \eqref{eq:intLagrSystem}, with a minimal time evolution criterion, which reduces the computational complexity, especially when accumulating breaking times are present. The latter phenomenon is thoroughly discussed for the related Camassa--Holm equation in \cite{AccumulationCH}.

\subsection{The projection operator}

We start by recalling the projection operator $P_{\Dx}$ introduced in \cite[Def. 3.2]{AlphaAlgorithm}. 
Let $\{x_j \}_{j \in \mathbb{Z}}$ be a uniform discretization of $\R$, where $x_j = j \Delta x$ for $j \in \mathbb{Z}$ and $\Delta x > 0$. Furthermore, to ease the notation, introduce for any function $f:\R \rightarrow \R$ the sequence $\{f_j\}_{j \in \mathbb{Z}}$ where $f_j = f(x_j)$ and the difference operators,
\begin{align*}
    D_+f_j &:= \frac{f_{j+1} - f_j}{\Delta x}, \\ 
    Df_{2j} &:= \frac{D_+ f_{2j+1} + D_+f_{2j}}{2} = \frac{f_{2j+2} - f_{2j}}{2\Delta x}.
\end{align*}
Then we can define the projection operator $P_{\Dx}$ as follows.  
\begin{definition}\label{def:ProjOp}
    Define $P_{\Dx}:\! \D_0^{\alpha}\! \rightarrow\! \D_0^{\alpha}$ by $P_{\Delta x} \!\left((u, F, G) \right)\! =\!(u_{\Delta x}, F_{\Delta x}, G_{\Delta x})$, where  
     \begin{align}
        u_{\Delta x}(x) &= \begin{cases}
        u_{2j} + \left(Du_{2j} \mp q_{2j} \right)(x-x_{2j}), & x_{2j} < x \leq x_{2j+1}, 
        \\ 
        u_{2j+2} + \left(Du_{2j} \pm q_{2j} \right) \left(x-x_{2j+2} \right) \!, & x_{2j+1} < x \leq x_{2j+2}, 
        \end{cases}
        \label{eq:Proj_u}
    \end{align}
    with $q_{2j}$ given by
    \begin{equation*}
    q_{2j} := \sqrt{DF_{\mathrm{ac}, 2j} - (Du_{2j})^2},
\end{equation*}
      and
    \begin{align}
        G_{\Delta x}(x) &= F_{\Delta x}(x) = F_{\Delta x, \mathrm{ac}}(x) + F_{\Delta x, \mathrm{sing}}(x), \quad \text{ for all } x\in \R,  
        \label{eq:Proj_F}
    \end{align}
   where the absolutely continuous and singular part are given by 
    \begin{align}
    F_{\Delta x, \mathrm{ac}}(x) &= \begin{cases} F_{\mathrm{ac}, 2j} + \left(Du_{2j} \mp q_{2j}\right)^2 \left(x - x_{2j} \right), & x_{2j} < x \leq x_{2j+1}, 
    \\ 
    \frac{1}{2}\left(F_{\mathrm{ac}, 2j+2} + F_{\mathrm{ac}, 2j}\right) \mp 2Du_{2j} q_{2j}\Delta x \\ \quad + \left(Du_{2j} \pm q_{2j} \right)^2(x-x_{2j+1}), & x_{2j+1} < x \leq x_{2j+2},
    \end{cases}
    \label{eq:Proj_Fac} \end{align}
    and 
    \begin{align}
        F_{\Delta x, \mathrm{sing}}(x) &= F_{2j+2} - F_{\mathrm{ac}, 2j+2}= F_{\mathrm{sing}, 2j+2}, & x_{2j} < x \leq x_{2j+2},
        \label{eq:Proj_Fs}
    \end{align}
    respectively.
\end{definition}

\begin{remark}
	Note that \eqref{eq:Proj_u} and \eqref{eq:Proj_Fac} leave us with two possible sign choices over each interval $[x_{2j}, x_{2j+2}]$. The accuracy of the numerical approximation is affected by the choice of sign, see e.g. \cite[Ex. 3.4]{AlphaAlgorithm}. We therefore implement the sign-selection criterion from \cite{AlphaAlgorithm}, which is based on minimizing the difference between $u(x_{2j+1})$ and $u_{\Dx}(x_{2j+1})$. In particular, let
\begin{equation*}
	k_{2j} := \argmin_{m \in \{0, 1\}} \left \{ (D_+u_{2j} - Du_{2j})\Dx + (-1)^{m+1}q_{2j}\Dx \right \},
\end{equation*}
then we use the sign $(-1)^{k_{2j}}$ over $[x_{2j}, x_{2j+1}]$ and $(-1)^{k_{2j}+1}$ over $[x_{2j+1}, x_{2j+2}]$. 
\end{remark}

\subsection{Numerical implementation of the algorithm}

In this section the focus is on the implementation of the algorithm. For this purpose, fix a discretization parameter $\Delta x > 0$, a final time $T \geq 0$, and an initial datum $(u, F, G)(0) \in \D_0^{\alpha}$. 
\subsubsection{Implementation of $L$ and computation of $\tau_{\Dx}$}
The numerical Lagrangian initial data is given by  
\begin{equation}
    X_{\Delta x}(0) = \left(y_{\Delta x}, U_{\Delta x}, V_{\Delta x}, H_{\Delta x} \right)(0) = L \circ P_{\Delta x} \left((u, F, G)(0) \right) \!. 
\label{eq:numLagrInitial}
\end{equation}
For a  detailed account about how to numerically implement the mapping $L$ the interested reader is referred to \cite[Sec. 3.2.1]{AlphaAlgorithm}. We here emphasize that the uniform grid in Eulerian coordinates $\{x_j\}_{j \in \mathbb{Z}}$ is transformed into a nonuniform Lagrangian discretization $\{\xi_{j}\}_{j \in \mathbb{Z}}$, where each pair $[x_{2j}, x_{2j+1}]$ and $[x_{2j+1}, x_{2j+2}]$ 
possibly results in three Lagrangian grid cells with gridpoints given by 
\begin{align}\label{eq:LagrGridpoints}
	\xi_{3j} &= x_{2j} + G_{\Dx}(0, x_{2j}), \hspace{1.4cm} \xi_{3j+1} = x_{2j} + G_{\Dx}(0, x_{2j}+), \nonumber \\ 
	\xi_{3j+2} &= x_{2j+1} + G_{\Dx}(0, x_{2j+1}), \hspace{0.7cm}
	\xi_{3j+3} = x_{2j+2} + G_{\Dx}(0, x_{2j+2}),
\end{align} 
where the notation $x_{2j}+$ denotes the right limit at $x_{2j}$. Note that $\xi_{3j} = \xi_{3j+1}$ if $G_{\Dx}(0,\cdot)$ does not admit a jump discontinuity at $x=x_{2j}$. 
Moreover, due to Definition~\ref{def:MapL} and Definition~\ref{def:ProjOp}, $X_{\Dx}(0, \cdot)$ will be continuous and piecewise linear with nodes located at $\{\xi_j\}_{j \in \mathbb{Z}}$. Consequently, one can recover $X_{\Dx}(0, \xi)$ for any $\xi \in \R$ by linear interpolation based on the values attained at $\{\xi_j\}_{j \in \mathbb{Z}}$, i.e, $\{X_j(0)\}_{j\in\mathbb{Z}}= \{X_{\Dx}(0, \xi_j)\}_{j\in\mathbb{Z}}$. 

\begin{figure}
	\centering
	\captionsetup{width=.9\linewidth}
	\includegraphics{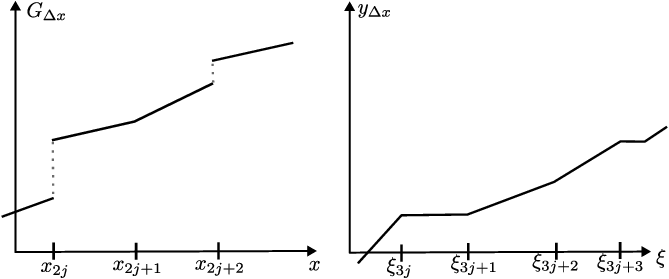}
	\caption{The relation between the Eulerian and Lagrangian gridpoints. Note, that points, where $G_{\Dx}$ has a jump, are mapped to intervals where $y_{\Dx}$ is constant.} 
	\label{fig:eulerToLagrange}
\end{figure}

The relationship between the Eulerian and Lagrangian gridpoints is illustrated in Figure \ref{fig:eulerToLagrange}, from which we observe that $y_{\Dx}(0, \xi) \!= x_{2j}$ for all $\xi \in [\xi_{3j}, \xi_{3j+1}]$ and by \eqref{eq:L_eq2}, $y_{\Dx, \xi}(0, \xi) \!=\! 0 = \!U_{\Dx, \xi}(0, \xi)$ for all $\xi \in [\xi_{3j}, \xi_{3j+1}]$. As a consequence, when computing the numerical wave breaking function $\tau_{\Dx}:\R \rightarrow [0, \infty]$ via \eqref{eq:waveBreakingFunc} with initial data \eqref{eq:numLagrInitial}, it holds that $\tau_{\Dx}(\xi)\! = 0$ for $\xi \in [\xi_{3j}, \xi_{3j+1}]$. Furthermore, observe that $U_{\Dx, \xi}(0, \xi) \!= u_{\Dx, x}(0, y_{\Dx}(0, \xi))y_{\Dx, \xi}(0, \xi)$ for all $\xi \in (\xi_{3j+1}, \xi_{3j+3})\setminus\! \{\xi_{3j+2}\}$, and introduce
\begin{align*}
    \tau_{3j+ \frac{3}{2}} &= \begin{cases} -\frac{2}{Du_{2j} \mp q_{2j}}, & \text{if } Du_{2j} \mp q_{2j} < 0, \\ 
    \infty, & \text{otherwise}, \end{cases} 
    \\
    \tau_{3j+\frac{5}{2}} &=\begin{cases} -\frac{2}{Du_{2j} \pm q_{2j}}, & \text{if } Du_{2j} \pm q_{2j} < 0, \\ 
    \infty, & \text{otherwise}. \end{cases}
\end{align*} 
Then the wave breaking function $\tau_{\Dx}(\xi)$ for $\xi \in [\xi_{3j}, \xi_{3j+3})$ takes the form 
\begin{align}\label{eq:numBreakFunction}
    \tau_{\Delta x}(\xi) &= \begin{cases} 
    0, & \xi \in [\xi_{3j}, \xi_{3j +1}], \\
    \tau_{3j+\frac{3}{2}}, & \xi \in (\xi_{3j + 1}, \xi_{3j+2}], \\
    \tau_{3j + \frac{5}{2}}, & \xi \in (\xi_{3j+2}, \xi_{3j+3}). \end{cases}
\end{align}
To ease notation later on, we will denote by $\tau_{3j+\frac{1}{2}}$ the value attained by $\tau_{\Dx}$ on $[\xi_{3j}, \xi_{3j+1}]$. The reader is referred to \cite[Sec. 3.2.2]{AlphaAlgorithm} for more details about $\tau_{\Dx}$.

\subsubsection{Time evolution and numerical iteration procedure}\label{sec:numericalImp}
The next step is the implementation of the solution operator $S_t$ from Definition~\ref{def:SolOPLagr}. 

Note that for a fixed $\xi \in \R$ with $0<\tau(\xi) < \infty$, we have, due to \eqref{eq:int_ODE3},
\begin{align}\label{eq:energy_den}
	V_{\xi}(t, \xi) &= \begin{cases}
		V_{0, \xi}(\xi), & t < \tau(\xi), \\
		(1-\alpha(y(\tau(\xi), \xi)))V_{0, \xi}(\xi), & \tau(\xi) \leq t,
	\end{cases}
\end{align}
from which it is evident that the amount of energy to be removed depends on $y(\tau(\xi), \xi)$. 
However, the time evolution of $y(t, \xi)$, which is governed by \eqref{eq:int_ODE1}, depends on $V_\xi(t, \cdot)$ on all of $\R$. Consequently, $y(\tau(\xi), \xi)$ will depend on all wave breaking occurrences in $[0, \tau(\xi))$. Thus \eqref{eq:intLagrSystem} cannot be solved explicitly in general. As a consequence, we have to approximate the solution operator $S_t$ numerically. This is done by combining an iteration scheme, which draws inspiration from the one used in the proof of \cite[Lem. 2.3]{AlphaHS}, with a minimal time evolution criterion that reduces the number of times numerical integration is performed. The resulting approximation of $S_t$ is restricted to the time interval $[0, T]$. 

We start by introducing 
\begin{equation*}
    \zeta_{\Dx} := y_{\Dx} - \id, 
\end{equation*}
which belongs to $L^{\infty}(\R)$ in contrast to $y_{\Dx}$. Furthermore, set
\begin{equation*}
    \hat{X}_{\Dx} = (\zeta_{\Dx}, U_{\Dx}, V_{\Dx}, H_{\Dx}). 
\end{equation*}
From a numerical perspective, it is advantageous to compute the time evolution of $\hat{X}_{\Dx, \xi}$ rather than the one of $\hat{X}_{\Dx}$, due to possible drops in $V_{\Dx, \xi}$. However, a major difficulty now, which is not present when $\alpha$ is a constant, see \cite{AlphaAlgorithm}, is the lack of a closed ODE-system for $\hat{X}_{\Dx, \xi}$, since $V_{\Dx, \xi}$ depends on $y_{\Dx}$, cf.  \eqref{eq:energy_den}.

Note that $\hat{X}_{\Dx, \xi}(0)$, given by differentiating \eqref{eq:numLagrInitial}, is a piecewise constant function, whose possible discontinuities are located at the nodes $\{\xi_{j} \}_{j \in \mathbb{Z}}$. As a consequence, we can recover $\hat{X}_{\Dx, \xi}(0)$ from the sequence which associates to each $j \in \mathbb{Z}$ the value attained at the midpoint of the interval $[\xi_j, \xi_{j+1}]$, i.e.,
\begin{align} \label{eq:sec_lagr}
    \hat{X}_{j+\frac{1}{2}, \xi}(0) &= \begin{cases}
    \hat{X}_{\Dx, \xi} \left(0, \tfrac{1}{2}(\xi_{j} + \xi_{j+1}) \right) \!, & \text{if } \xi_j \neq \xi_{j+1}, \\ 
    (0, 0, 0, 0), & \text{otherwise}. 
    \end{cases}
\end{align}
Furthermore, introduce 
\begin{equation}
    \tau_{j+\frac{1}{2}} = \tau_{\Dx} \left( \tfrac{1}{2} (\xi_j + \xi_{j+1} ) \right) \!,
    \label{eq:sec_breaking}
\end{equation}
and let $\{\tilde{\tau}_{j} \}_{j \in \mathbb{N}}$ be the sequence of distinct breaking times sorted in increasing order, i.e., $\tilde{\tau}_{j} < \tilde{\tau}_{j+1}$. In addition, if $\tilde{\tau}_1 = 0$, we relabel the sequence by reducing the indices by one. Otherwise, augment the sequence by the element $\tilde{\tau}_0 = 0$. Either way, the resulting sequence, which might be finite, is denoted by  $\{\hat{\tau}_j \}_{j \in \mathcal{J}}$, where $\mathcal{J} \subset \mathbb{N}_0 \!= \mathbb{N} \cup \{0\}$ and $\hat{\tau}_0 =0$. 

Next, we extract from $\{\hat{\tau}_j \}_{j \in \mathcal{J}}$ a finite sequence of breaking times $\{\tau_k^*\}_{k=0}^{N}$, which constitutes a non-uniform discretization of $[0,T]$. To this end, we introduce a temporal parameter $\Dt>0$, which eventually will be related to $\Delta x$ in order to obtain an upper bound on the number of iterations needed, see \eqref{eq:dt}.

\vspace{-0.25cm}
\begin{algorithm}
\caption{Extraction of the finite time sequence $\{\tau_k^*\}$.}
\label{alg:pseudocode}
\begin{algorithmic}
\STATE Let $\tau_0^* = 0$, $m=0$ and $k=0$.
\STATE \textbf{if} $\hat{\tau}_{m+1} \geq T$ \textbf{then} let $\tau_{k+1}^* = T$ and \textbf{stop}. \vspace{-0.05cm}
\IF{$\hat{\tau}_{m+2} - \tau_{k}^* \leq \Delta t$ \textbf{and} $\hat{\tau}_{m+2} <\! T$}  
        \STATE Let $\tau_{k+1}^* = \max\limits_{j \geq m+2} \left\{\hat{\tau}_j: \tau_{k}^*\! < \! \! \hat{\tau}_j \leq \min\{\tau_{k}^* + \Delta t, T\} \right\}=\hat{\tau}_{m+l}$. 
        \STATE $m \mapsto m+l$.
        \STATE $k \mapsto k+1$.
        \STATE \textbf{if} $\tau_{k}^* = T$  \textbf{then} \textbf{stop}. 
\ELSE \vspace{-0.05cm}
        \STATE $\tau_{k+1}^* = \hat{\tau}_{m+1}$. 
        \STATE $m \mapsto m+1$. 
        \STATE $k \mapsto k+1$. 
       \ENDIF{}
\end{algorithmic}
\end{algorithm}
\vspace{-0.25cm}

We now show that the resulting sequence $\{\tau_k^*\}$ is finite and constitutes a non-uniform discretization of $[0,T]$, with local mesh sizes 
\begin{equation}\label{eq:temporalMesh}
	\Dt_k = \tau_{k+1}^* - \tau_k^*.
\end{equation}
 
\begin{prop}\label{lem:TimestepBound}
The sequence $\{\tau_k^*\}$, extracted according to Algorithm~\ref{alg:pseudocode}, has $N+1$ elements and 
\begin{equation}\label{eq:TimestepBound}
		N< 2\!\left(\frac{T}{\Dt} +1\right)\!.
	\end{equation}
\end{prop}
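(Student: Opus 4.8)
The plan is to track how the two branches of Algorithm~\ref{alg:pseudocode} consume the parameter space $[0,T]$ and to argue that each pass through the loop advances $\tau_k^*$ by a definite amount, except possibly for the final step. The key observation is that the sequence $\{\tau_k^*\}$ is strictly increasing: in the \textbf{else} branch we set $\tau_{k+1}^* = \hat\tau_{m+1} > \tau_k^*$ (since $\hat\tau_{m+1}$ has not yet been selected), and in the \textbf{if} branch $\tau_{k+1}^* = \hat\tau_{m+l}$ with $\hat\tau_{m+l} > \tau_k^*$ by the defining constraint in the max. So the loop cannot stall, and since $\{\hat\tau_j\}_{j\in\mathcal J}$ is used up monotonically (the index $m$ only increases), the procedure terminates; call $N+1$ the number of elements produced, so the last index is $N$ and $\tau_N^* = T$ (the algorithm always stops by setting some $\tau_{k+1}^* = T$, either via the early \textbf{if} test, via the min capping at $T$, or when $\hat\tau_{m+1} = \hat\tau_{m+l} = T$ happens to be reached).

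The heart of the argument is a lower bound on how much "time budget" is spent per step. I will classify the steps $k = 0, 1, \dots, N-1$ into two types. \emph{Type I} steps are those where the \textbf{if} branch was taken but $\min\{\tau_k^* + \Dt, T\} = \tau_k^* + \Dt$, i.e.\ the cap was $\Dt$ and not $T$; for such a step I claim $\tau_{k+1}^* - \tau_k^* > \Dt/2$... actually the cleaner route: after a Type I step at level $k$, the \emph{next} breaking time $\hat\tau_{m+l+1}$ exceeds $\tau_k^* + \Dt$ (otherwise it would have been included in the max), hence $\tau_{k+2}^* \geq \hat\tau_{m+l+1} > \tau_k^* + \Dt$ when step $k+1$ is an \textbf{else} step, and more generally consecutive pairs of steps advance by more than $\Dt$. \emph{Type II} steps are the \textbf{else} steps and the \textbf{if}-steps capped by $T$; an \textbf{else} step is taken precisely when $\hat\tau_{m+2} - \tau_k^* > \Dt$ or $\hat\tau_{m+2} \geq T$, and in the former case $\tau_{k+1}^* = \hat\tau_{m+1}$, so the \emph{following} selected time satisfies $\tau_{k+2}^* \geq \hat\tau_{m+2} > \tau_k^* + \Dt$. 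In every case I get: for each $k$ with $k + 2 \leq N$, $\tau_{k+2}^* - \tau_k^* > \Dt$, with the only exceptions being steps adjacent to the terminal time $T$.

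Summing this over even and odd indices separately: $\tau_{2i}^* > 2i \cdot (\Dt/ ?)$... more precisely, the inequality $\tau_{k+2}^* - \tau_k^* > \Dt$ telescoped along $k = 0, 2, 4, \dots$ gives $\tau_{2i}^* > i\,\Dt$, and along $k = 1, 3, 5, \dots$ gives $\tau_{2i+1}^* - \tau_1^* > (i-1)\Dt$, hence $\tau_{2i+1}^* > (i-1)\Dt$. Since $\tau_k^* \leq T$ for all $k$ and $\tau_{N}^* = T$, taking $k$ to be the largest even index $\leq N-1$ forces $\lfloor (N-1)/2 \rfloor \cdot \Dt < T$, i.e.\ $N - 1 < 2(T/\Dt) + 1$, which rearranges to $N < 2(T/\Dt + 1)$. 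I would write the telescoping carefully enough to absorb the at most two "exceptional" near-$T$ steps into the $+1$ slack, which is exactly why the stated bound has the form $2(T/\Dt + 1)$ rather than $2T/\Dt$.

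The main obstacle is the bookkeeping in the \textbf{if} branch: one must verify that after selecting $\tau_{k+1}^* = \hat\tau_{m+l}$ as the \emph{maximal} breaking time in $(\tau_k^*, \tau_k^* + \Dt]$, the genuinely next breaking time lies strictly beyond $\tau_k^* + \Dt$ (so the gap to the step after next exceeds $\Dt$), while simultaneously handling the degenerate possibility $l = 2$ where only one new breaking time is absorbed. Keeping the case split clean — \textbf{else}-then-\textbf{else}, \textbf{else}-then-\textbf{if}, \textbf{if}-then-anything — and checking that each consecutive pair of non-terminal steps advances the discretization by more than $\Dt$ is the only delicate point; everything else is termination (monotonicity of $m$) and a two-line telescoping sum.
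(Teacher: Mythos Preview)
Your proposal is correct and follows essentially the same approach as the paper: both hinge on the key two-step estimate $\tau_{k+2}^* - \tau_k^* > \Dt$ for $k \leq N-3$, obtained by casing on whether the \textbf{if} or \textbf{else} branch produced $\tau_{k+1}^*$, followed by a telescoping/summation to bound $N$. The paper sums $\sum_{k=0}^{N-3}(\tau_{k+2}^*-\tau_k^*) = \sum\Dt_{k+1} + \sum\Dt_k \leq 2T$ directly, which is slightly cleaner than your even/odd split, but the arguments are equivalent and the handling of the exceptional step(s) near $T$ is the same.
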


\begin{proof}
Suppose that the strictly increasing sequence $\{\tau_k^*\}$, which is bounded from above by $T$, is not finite. Pick $k\in \mathbb{N}_0$, then there exists $m\in \mathbb{N}_0$ such that $\tau_k^*=\hat \tau_m<\hat\tau_{m+1}\leq \tau_{k+1}^*$ and one of the following scenarios plays out:
\begin{itemize}
	\item If $\tau_{k+1}^* = \hat{\tau}_{m+1}$, which is only possible if $\hat{\tau}_{m+2} - \tau_k^* > \Dt$, then
	\begin{equation*}
	\tau_{k+2}^* - \tau_k^*\geq  \hat{\tau}_{m+2}- \tau_k^*> \Dt.
	\end{equation*} 
	\item If $\tau_{k+1}^*=\hat{\tau}_{m+l}$ for $l\geq 2$, which is only possible if $\hat \tau_{m+2}- \tau_k^*\leq \Dt$, then the maximum in Algorithm~\ref{alg:pseudocode} ensures that 
	\begin{equation*}
		\tau_{k+2}^* -\tau_k^* \geq \hat{\tau}_{m+l+1} - \hat{\tau}_k^* > \Dt. 
	\end{equation*}
	\end{itemize}
Thus, for any $N\in \mathbb{N}_0$ we have 
\begin{align*}
	(N-1)\Dt = \sum_{k=0}^{N-2}\Dt < \sum_{k=0}^{N-2} \left(\tau_{k+2}^* - \tau_k^*\right) = \sum_{k=0}^{N-2} \Dt_{k+1} + \sum_{k=0}^{N-2}\Dt_k \leq 2T,
\end{align*}
which is impossible for $T>0$ and $\Dt>0$ fixed. As a consequence, the sequence $\{\tau_k^*\}$ has at most $N+1$ elements and 
\begin{equation}\label{eq:Claim}
	\tau_{k+2}^* - \tau_k^* > \Dt, \qquad  \text{ for }k \in \{0, \dots, N-3\},
\end{equation}
which implies that $(N-2)\Dt\leq 2T$.
\end{proof}

With the help of the sequence $\{\tau_k^*\}_{k=0}^N$,  we can approximate the solution to \eqref{eq:intLagrSystem} on $(\tau_k^*, \tau_{k+1}^*]$ by using an iteration scheme, where the number of iterations needed is related to the accuracy of the approximation, and hence dependent on the choice of $\Dt$ and $\Dx$.

This iteration scheme is defined inductively as follows. 
Assume that the sequence $\{\hat X_{j+\frac12, \xi}(t)\}_{j\in \mathbb{Z}}$ has already been computed for $t\in [0, \tau_k^*]$ and introduce the sequence $\{\hat{X}_{j+\frac{1}{2}, \xi}^n(t)\}_{j \in \mathbb{Z}, n\in \mathbb{N}} = \{(\zeta_{j+\frac{1}{2}, \xi}^n, U_{j+\frac{1}{2}, \xi}^n, V_{j+\frac{1}{2}, \xi}^n, H_{j+\frac{1}{2}, \xi}^n )(t) \}_{j \in \mathbb{Z}, n\in \mathbb{N}}$ for $t\in (\tau_k^*, \tau_{k+1}^*]$, whose time evolution is governed by 
\begin{subequations}\label{eq:numLagrSystemIter}
\begin{align}
    \zeta_{j+\frac{1}{2}, t\xi}^{n}(t) &= U_{j+\frac{1}{2}, \xi}^{n}(t), 
    \label{eq:numLagr1it}
    \\ 
    U_{j+\frac{1}{2}, t \xi}^{n}(t) &= \frac{1}{2}V_{j+\frac{1}{2}, \xi}^{n}(t), \label{eq:numLagr2it}
    \\ 
    V_{j+\frac{1}{2}, \xi}^{n}(t) &= \left(1 - \beta_{j+\frac{1}{2}}^{n}(t) \chi_{\{s:s \geq \tau_{j+\frac{1}{2}} > \tau_{k}^* \}} (t) \right) V_{j+\frac{1}{2}, \xi}(\tau_k^*), \label{eq:numLagr3it}
    \\
    H_{j+\frac{1}{2}, t\xi}^{n}(t) &= 0, \label{eq:numLagr4it}
\end{align}
\end{subequations}
with $\hat{X}_{j+\frac{1}{2}, \xi}^{n}(\tau_k^*) = \hat{X}_{j+\frac{1}{2}, \xi}(\tau_k^*)$ for all $n \geq 1$, where  \vspace{-0.15cm}
\begin{equation}\label{eq:beta_one}
    \beta_{j+\frac{1}{2}}^1(t) = 0,
\end{equation}
and for $n \geq 2$, \vspace{-0.15cm}
\begin{align}\label{eq:beta}
    \beta_{j+\frac{1}{2}}^{n}(t) &= \begin{cases}
        0, & \tau_{j+\frac{1}{2}} \notin (\tau_k^*, \tau_{k+1}^*] , \\
        \alpha \!\left(y_{j}^{n-1}(\tau_{k+1}^*) \right) \!, & \tau_{j+\frac{1}{2}} \in (\tau_k^*, \tau_{k+1}^*]. 
    \end{cases}
\end{align}
A closer look at the above system reveals that it depends on the sequence $\{y^n_{j}(\tau_{k+1}^*)\}_{j\in \mathbb{Z}}$, which can, keeping \eqref{eq:intLagrSystem} in mind, be computed from $\{ \hat X^n_{j+\frac12,\xi}\}_{j\in \mathbb{Z}}$. This will be discussed in depth after examining how to solve \eqref{eq:numLagrSystemIter}.

For $k\in \{0, \dots, N-1\}$ introduce the set $\mathcal{I}_{k+\frac{1}{2}} \subset \mathbb{Z}$ by
\begin{equation}\label{eq:breakingIndices}
	\mathcal{I}_{k+\frac{1}{2}} := \left\{j \in \mathbb{Z}: \tau_{j+\frac{1}{2}} \in (\tau_k^*, \tau_{k+1}^*] \right\}\!.
\end{equation}
If $j \notin \mathcal{I}_{k+\frac{1}{2}}$ and $t \in [\tau_k^*, \tau_{k+1}^*]$, then the solution to \eqref{eq:numLagrSystemIter} is given by
\begin{align}\label{eq:exact_num}
	\zeta_{j+\frac{1}{2}, \xi}^{n}(t) &= \zeta_{j+\frac{1}{2}, \xi}(\tau_k^*) + U_{j+\frac{1}{2}, \xi}(\tau_k^*)(t-\tau_k^*) + \frac{1}{4}V_{j+\frac{1}{2}, \xi}(\tau_k^*)(t-\tau_k^*)^2\!, \nonumber \\ 
	U_{j+\frac{1}{2}, \xi}^{n}(t) &= U_{j+\frac{1}{2}, \xi}(\tau_k^*) + \frac{1}{2}V_{j+\frac{1}{2}, \xi}(\tau_k^*)(t-\tau_k^*), \nonumber \\
	V_{j + \frac{1}{2}, \xi}^{n}(t) &= V_{j+\frac{1}{2}, \xi}(\tau_k^*). 
\end{align}
This will also be the solution when $j \in \mathcal{I}_{k+\frac{1}{2}}$ prior to wave breaking, i.e., for $t \in [\tau_k^*, \tau_{j+\frac{1}{2}})$. In addition, 
\begin{equation}\label{eq:num_timecont}
	\lim_{t \uparrow \tau_{j+\frac{1}{2}}} \zeta_{j+\frac{1}{2}, \xi}^{n}(t)\! = \zeta^{n}_{j+\frac{1}{2}, \xi}(\tau_{j+\frac{1}{2}})\! = -1 \hspace{0.4cm} \text{and} \hspace{0.4cm} \lim_{t \uparrow \tau_{j+\frac{1}{2}}} U_{j+\frac{1}{2}, \xi}^{n}(t)\! = U^{n}_{j+\frac{1}{2}, \xi}(\tau_{j+\frac{1}{2}})\! = 0,
\end{equation}
since $\zeta_{j+\frac{1}{2}, \xi}^{n}(t)$ and $U_{j+\frac{1}{2}, \xi}^{n}(t)$ are continuous with respect to time. Therefore the solution past wave breaking can also be computed using \eqref{eq:numLagrSystemIter}, which yields 
\begin{align}\label{eq:numExactIteration}
	\zeta_{j+\frac{1}{2}, \xi}^{n}(t) &= \begin{cases}
		\zeta_{j+\frac{1}{2}, \xi}(\tau_k^*) + U_{j+\frac{1}{2}, \xi}(\tau_k^*)(t-\tau_{k}^*) \\ \quad + \frac{1}{4}V_{j+\frac{1}{2}, \xi}(\tau_k^*)(t-\tau_k^*)^2, & \tau_k^* \leq t < \tau_{j+\frac{1}{2}}, \nonumber \\
		-1 + \frac{1}{4}\!\left(1-\alpha(y^{n-1}_{j}(\tau_{k+1}^*))\right)\!V_{j+\frac{1}{2}, \xi}(\tau_k^*)(t-\tau_{j+\frac{1}{2}})^2, & \tau_k^* <\tau_{j+\frac{1}{2}} \leq t,
		\end{cases} \nonumber \\
	U_{j+\frac{1}{2}, \xi}^{n}(t) &= \begin{cases}
		U_{j+\frac{1}{2}, \xi}(\tau_k^*) + \frac{1}{2}V_{j+\frac{1}{2}, \xi}(\tau_k^*)(t-\tau_k^*), & \tau_k^* \leq t < \tau_{j+\frac{1}{2}}, \\
		\frac{1}{2}\!\left(1-\alpha(y^{n-1}_{j}(\tau_{k+1}^*))\right)\!V_{j+\frac{1}{2}, \xi}(\tau_k^*)(t-\tau_{j+\frac{1}{2}}), & \tau_k^* <\tau_{j+\frac{1}{2}} \leq t, 
		\end{cases} \nonumber \\ 
		V_{j+\frac{1}{2}, \xi}^{n}(t) &= \begin{cases}
		V_{j+\frac{1}{2}, \xi}(\tau_k^*), & \tau_k^* \leq t < \tau_{j+\frac{1}{2}}, \\
		\left(1-\alpha(y_{j}^{n-1}(\tau_{k+1}^*))\right)\!V_{j+\frac{1}{2}, \xi}(\tau_k^*), & \tau_k^* < \tau_{j+\frac{1}{2}} \leq t. 
		\end{cases} \raisetag{-42.5pt}
\end{align}
\vspace{0.25cm}

Finally, to proceed from one iterate to another, i.e., from $\{\hat X^{n}_{j+\frac12, \xi}(t)\}_{j\in\mathbb{Z}}$ to $\{\hat X^{n+1}_{j+\frac12,\xi}(t)\}_{j\in \mathbb{Z}}$, one has to compute $\{y^{n}_j(\tau_{k+1}^*)\}_{j\in \mathbb{Z}}$ from the sequence $\{\hat X_{j+\frac12,\xi}^{n}(t)\}_{j\in \mathbb{Z}}$. Since $\{\hat{X}_j^{n}(t)\}_{j \in \mathbb{Z}}\! = \{(\zeta_j^{n}, U_j^{n}, V_j^{n}, H_j^{n})(t)\}_{j \in \mathbb{Z}}$ is an approximation of the exact solution to \eqref{eq:intLagrSystem}--\eqref{eq:waveBreakingFunc}, its asymptotic behavior as $j \!\rightarrow\! \pm \infty$ should change in accordance with the asymptotic behavior of \eqref{eq:intLagrSystem}.

Motivated by \eqref{eq:intLagrSystem} and \eqref{eq:numLagrSystemIter} it immediately follows that
\begin{equation*}
	H_j^{n}(t) = H_j(\tau_k^*) = H_j(0),
\end{equation*}
while the computation of the remaining sequences is a bit more involved. 

Introduce the abbreviation \small $f_{\pm \infty}(t) = \lim_{j \rightarrow \pm \infty}f_j(t)$\normalsize. Then \eqref{eq:int_ODE3} implies 
\begin{equation}\label{eq:leftAsymptoteV}
	V^{n}_{-\infty}(t) = 0 \quad \text{ for all } t \in [\tau_k^*, \tau_{k+1}^*],
\end{equation}
which after being combined with \eqref{eq:exact_num} and \eqref{eq:numExactIteration}, results in the following recursive formula for any $t \in [\tau_k^*, \tau_{k+1}^*]$,  
\begin{align}\label{eq:recursiveV}
	V_j^{n}(t) &= \sum_{m=-\infty}^{j-1}V^{n}_{m+\frac{1}{2}, \xi}(t)\!\left(\xi_{m+1} - \xi_m\right) = V_{j-1}^{n}(t) + V_{j-\frac{1}{2}, \xi}^{n}(t) \left(\xi_{j} - \xi_{j-1} \right)\! \nonumber
	\\ &= V_j(\tau_k^*) -\sum_{m =-\infty}^{j-1} \beta_{m+\frac{1}{2}}^{n}(t)\chi_{\{s:s\geq \tau_{m+\frac{1}{2}}> \tau_k^*\}}(t)V_{m+\frac{1}{2}, \xi}(\tau_k^*)(\xi_{m+1}-\xi_m). 
\end{align}

In order to compute $\{U_j^{n}(t)\}_{j \in \mathbb{Z}}$ for $t \in [\tau_k^*, \tau_{k+1}^*]$, we start by calculating its left asymptote. Combining \eqref{eq:intLagrSystem} and \eqref{eq:numLagrSystemIter} implies 
\begin{align}
	U_{-\infty}^{n}(t) &= U_{-\infty}(\tau_k^*) - \frac{1}{4}V_{\infty}(\tau_k^*)(t-\tau_k^*) \nonumber
	\\ & \quad + \frac{1}{4}\sum_{m=-\infty}^{\infty}\beta_{m+\frac{1}{2}}^{n}(t) \chi_{\{s:s \geq \tau_{m+\frac{1}{2}} > \tau_k^*\}}(t)V_{m+\frac{1}{2}, \xi}(\tau_k^*)(\xi_{m+1} - \xi_m)(t-\tau_{m+\frac{1}{2}}), \raisetag{-25pt}
\end{align}
and hence, we have for all $j \in \mathbb{Z}$ and $t \in [\tau_k^*, \tau_{k+1}^*]$, 
\begin{equation}\label{eq:recursiveU}
	U_{j}^{n}(t) = U_{-\infty}^{n}(t) + \sum_{m=-\infty}^{j-1}U_{m+\frac{1}{2}, \xi}^{n}(t) \left(\xi_{m+1} - \xi_m\right) = U_{j-1}^{n}(t) + U_{j-\frac{1}{2}, \xi}^{n}(t)(\xi_j - \xi_{j-1}). 
\end{equation}
Following the same procedure for $\{\zeta_j^{n}(t)\}_{j \in \mathbb{Z}}$ where $t \in [\tau_k^*, \tau_{k+1}^*]$, i.e., using \eqref{eq:intLagrSystem} and \eqref{eq:numLagrSystemIter} once more, yields 
\begin{align}\label{eq:leftAsymptoteZeta}
	\zeta_{-\infty}^{n}(t) &= \zeta_{-\infty}(\tau_k^*) + U_{-\infty}(\tau_k^*)(t-\tau_k^*) - \frac{1}{8}V_{\infty}(\tau_k^*)(t-\tau_k^*)^2 \nonumber
	\\ & + \frac{1}{8}\sum_{m=-\infty}^{\infty}\beta_{m+\frac{1}{2}}^{n}(t) \chi_{\{s:s \geq \tau_{m+\frac{1}{2}} > \tau_k^*\}}(t)V_{m+\frac{1}{2}, \xi}(\tau_k^*)(\xi_{m+1}-\xi_m)(t-\tau_{m+\frac{1}{2}})^2, \raisetag{-25pt}
\end{align}
and for all $j \in \mathbb{Z}$ and $t \in [\tau_k^*, \tau_{k+1}^*]$, 
\begin{equation}\label{eq:recursiveZeta}
	\zeta_j^{n}(t) = \zeta_{-\infty}^{n}(t) + \sum_{m=-\infty}^{j-1}\zeta_{m+\frac{1}{2}, \xi}^{n}(t)\left(\xi_{m+1}-\xi_m\right) = \zeta_{j-1}^{n}(t) + \zeta_{j-\frac{1}{2}, \xi}^{n}(t)\left(\xi_j - \xi_{j-1}\right)\!.
\end{equation}
At last, after using $y_j^{n}(t) = \zeta_j^{n}(t) + \xi_j$, one recovers $\{y_j^{n}(t) \}_{j \in \mathbb{Z}}$.

Having a closer look at the iteration given by \eqref{eq:numLagrSystemIter}--\eqref{eq:beta} reveals that it suffices to compute $\{y_j^{n}(\tau_{k+1}^*)\}_{j \in \mathbb{Z}}$ in order to proceed from iterate $n$ to iterate $n+1$, meaning that we only need to use \eqref{eq:leftAsymptoteZeta} and \eqref{eq:recursiveZeta} once for every iterate. 
This reduces the computational complexity substantially compared to the case where one would compute $\{y_j^{n}(t)\}_{j \in \mathbb{Z}}$ at each wave breaking time $t=\tau_{m+\frac{1}{2}}$ for $m \in \mathcal{I}_{k+\frac{1}{2}}$. However, this comes at the cost of introducing a local error, which we analyze in Section~\ref{sec:Convergence}  

\begin{remark}\label{rem:evolveExact}
If no wave breaking occurs in $(\tau_k^*, \tau_{k+1}^*)$, the iteration scheme terminates after at most $2$ iterations. In particular, if no wave breaking takes place at $\tau_{k+1}^*$, which is only possible if $\tau_{k+1}^*=T$, the scheme terminates with $n=1$, and  if $\tau_{k+1}^*=\tau_{j+\frac12}$ for some $j\in \mathbb{Z}$, then $n=2$. Furthermore, the iteration scheme is redundant in the latter case as $\{\hat X^n_{j+\frac12,\xi}(t)\}$ for $t \in [\tau_k^*, \tau_{k+1}^*)$ coincides for all $n\in \mathbb{N}$ with the unique solution $\{\hat X_{j+\frac12,\xi}(t)\}$ to 
	\begin{align}\label{eq:numLagrSystem}
	    \zeta_{j+\frac{1}{2}, t\xi}(t) = U_{j+\frac{1}{2}, \xi}(t), \hspace{0.6cm} U_{j+\frac{1}{2}, t \xi}(t) = \frac{1}{2}V_{j+\frac{1}{2}, \xi}(t), \hspace{0.6cm}  V_{j+\frac{1}{2}, \xi}(t) = V_{j+\frac{1}{2}, \xi}(\tau_{k}^*), 
 \end{align}
with initial data $\{\hat{X}_{j+\frac{1}{2}, \xi}(\tau_k^*)\}_{j \in \mathbb{Z}}$. Hence, we have $\hat X_{j}(t)=\hat X_j^n(t)$ for all $t\in [\tau_k^*, \tau_{k+1}^*)$. Since  $\{(y_j^n, U_j^n)\}_{j \in \mathbb{Z}}$ is continuous with respect to time,  one can thereafter compute 
	\begin{align}\label{eq:numBreak}
	V_{j+\frac{1}{2}, \xi}(\tau_{k+1}^*) &= \begin{cases}
	V_{j+\frac{1}{2}, \xi}(\tau_k^*), & j \notin \mathcal{I}_{k+\frac{1}{2}}, \\
	(1-\alpha(y_j(\tau_{k+1}^*)))V_{j+\frac{1}{2}, \xi}(\tau_k^*), & j \in \mathcal{I}_{k+\frac{1}{2}}.
	\end{cases} 
\end{align}	
	Numerically, we therefore evolve according to \eqref{eq:numLagrSystem} and \eqref{eq:numBreak} whenever no wave breaking occurs within $(\tau_k^*, \tau_{k+1}^*)$, because this is slightly more efficient as we only have to compute $\{y_j(\tau_{k+1}^*)\}$ once. Moreover, no local error is introduced. 
\end{remark}   

The last issue we have to address is the stopping of the iteration scheme. To this end, let $\ell^{\infty}$ denote the usual sequence space equipped with the norm
\begin{equation*}
	\|\{a_j\}_{j \in \mathbb{Z}}\|_{\ell^{\infty}} := \sup_{j \in \mathbb{Z}}|a_j|. 
\end{equation*}

Closely inspecting \eqref{eq:numExactIteration} reveals that the difference between two successive elements in the iteration scheme depends heavily on $\| \{y_j^{n+1}(\tau_{k+1}^*)\}-\{y_j^n(\tau_{k+1}^*)\}\|_{\ell^{\infty}}$, which serves as a basis for determining a limiting criteria for the number of iterations needed.

\begin{prop}\label{prop:termination}
	Assume $\tau_{k}^* = \hat{\tau}_m$ and $\tau_{k+1}^*=\hat{\tau}_{m+l}$ for $m\! \! \in \mathbb{N}_0$ and $l \geq 2$. Then for every $\varepsilon > 0$ there is some $M_{\mathrm{it}}^k \geq 2$ such that
\begin{equation}\label{eq:termCond}
	\sup_{t \in [\tau_k^*, \tau_{k+1}^*]} \|\{y_j^{M_{\mathrm{it}}^k}(t)\} - \{y_j^{M_{\mathrm{it}}^k-1}(t)\}\|_{\ell^{\infty}} \leq \varepsilon. 
\end{equation}
\end{prop}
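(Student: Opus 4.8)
The plan is to show that the sequence of iterates $\{y_j^n(\tau_{k+1}^*)\}$ is Cauchy in $\ell^\infty$ by deriving a contraction estimate for the map taking iterate $n$ to iterate $n+1$, with a contraction constant that can be made small by constraining $\Delta t$ (equivalently, by Proposition~\ref{lem:TimestepBound}, by constraining the local mesh $\Dt_k$). First I would fix $k$ and work on the single interval $[\tau_k^*, \tau_{k+1}^*]$, where the only quantities that change from iterate to iterate are those indexed by $j \in \mathcal{I}_{k+\frac12}$, via the coefficient $\beta_{j+\frac12}^n(t) = \alpha(y_j^{n-1}(\tau_{k+1}^*))$. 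From the explicit formulas \eqref{eq:numExactIteration}, \eqref{eq:leftAsymptoteZeta} and \eqref{eq:recursiveZeta}, the difference $\zeta_j^{n+1}(t) - \zeta_j^n(t)$ (and likewise for $U$ and $V$) is a finite/convergent sum over $m \in \mathcal{I}_{k+\frac12}$ of terms of the form $\tfrac18\big(\alpha(y_m^{n}(\tau_{k+1}^*)) - \alpha(y_m^{n-1}(\tau_{k+1}^*))\big)V_{m+\frac12,\xi}(\tau_k^*)(\xi_{m+1}-\xi_m)(t-\tau_{m+\frac12})^2$, plus the analogous asymptote term. Using $|\alpha(a)-\alpha(b)| \leq \|\alpha'\|_\infty |a-b|$ and $|t - \tau_{m+\frac12}| \leq \Dt_k$ for $t$ in the interval and $\tau_{m+\frac12} \in \mathcal{I}_{k+\frac12}$, together with $\sum_{m} V_{m+\frac12,\xi}(\tau_k^*)(\xi_{m+1}-\xi_m) \leq V_\infty(\tau_k^*) = V_\infty(0) \leq H_\infty$ (energy is nonincreasing), one bounds
\begin{equation*}
	\sup_{t \in [\tau_k^*, \tau_{k+1}^*]}\|\{y_j^{n+1}(t)\} - \{y_j^n(t)\}\|_{\ell^\infty} \leq \tfrac18 \|\alpha'\|_\infty H_\infty \,\Dt_k^2 \; \|\{y_j^{n}(\tau_{k+1}^*)\} - \{y_j^{n-1}(\tau_{k+1}^*)\}\|_{\ell^\infty},
\end{equation*}
and in particular the same bound holds when the left-hand side is evaluated at $t = \tau_{k+1}^*$, giving a genuine self-map contraction on $\|\{y_j^{n}(\tau_{k+1}^*)\}-\{y_j^{n-1}(\tau_{k+1}^*)\}\|_{\ell^\infty}$.

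Next I would observe that $\Dt_k \leq \Dt$ by construction (Algorithm~\ref{alg:pseudocode} only ever advances to times within $\Delta t$ of $\tau_k^*$), so the contraction factor is at most $q := \tfrac18\|\alpha'\|_\infty H_\infty \Dt^2$. If $q < 1$ — which we may arrange by relating $\Dt$ to $\Dx$ as promised around \eqref{eq:dt}, or simply note holds for $\Dt$ small enough — then $\{y_j^n(\tau_{k+1}^*)\}$ is Cauchy in $\ell^\infty$, hence the differences $\|\{y_j^{n+1}(\tau_{k+1}^*)\}-\{y_j^n(\tau_{k+1}^*)\}\|_{\ell^\infty} \to 0$ geometrically. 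Combined with the displayed time-uniform estimate, $\sup_{t\in[\tau_k^*,\tau_{k+1}^*]}\|\{y_j^{n+1}(t)\}-\{y_j^n(t)\}\|_{\ell^\infty} \leq q^{\,n-1}\|\{y_j^{2}(\tau_{k+1}^*)\}-\{y_j^{1}(\tau_{k+1}^*)\}\|_{\ell^\infty} \to 0$, so for any $\varepsilon>0$ there is an index $M_{\mathrm{it}}^k \geq 2$ making the left-hand side of \eqref{eq:termCond} at most $\varepsilon$, which is exactly the claim. (One should also record that the base difference $\|\{y_j^2(\tau_{k+1}^*)\}-\{y_j^1(\tau_{k+1}^*)\}\|_{\ell^\infty}$ is finite, which is immediate since $\beta^1 \equiv 0$ and $\beta^2$ is bounded by $\|\alpha\|_\infty < 1$, so a single application of the difference formula with the crude bound $|t-\tau_{m+\frac12}|^2\leq\Dt_k^2$ and $\sum_m V_{m+\frac12,\xi}(\tau_k^*)(\xi_{m+1}-\xi_m)\leq H_\infty$ gives a finite bound.)

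The main obstacle I anticipate is bookkeeping the infinite sums cleanly: the asymptote formulas \eqref{eq:leftAsymptoteZeta}, \eqref{eq:leftAsymptoteU} involve sums over all $m$, and although $\beta_{m+\frac12}^n$ vanishes outside $\mathcal{I}_{k+\frac12}$, one must still justify that $\sum_{m\in\mathcal{I}_{k+\frac12}}V_{m+\frac12,\xi}(\tau_k^*)(\xi_{m+1}-\xi_m)$ converges and is controlled by $H_\infty$ uniformly in $n$ and $k$ — this follows because $V_\xi \leq H_\xi$ and $\sum_m H_{m+\frac12,\xi}(0)(\xi_{m+1}-\xi_m) = H_\infty$, but it needs to be stated. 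A secondary point is that the estimate must be uniform in $t$ over the closed interval including the endpoint $\tau_{k+1}^*$, and that past a breaking time $\tau_{m+\frac12}$ the difference formula changes form; here the key is simply that before $\tau_{m+\frac12}$ the $n$-dependence is absent (the solution agrees with \eqref{eq:exact_num}) so only the post-breaking branch contributes, and $|t - \tau_{m+\frac12}| \leq \Dt_k$ on the whole interval for those $m$ that have already broken. Everything else is the routine Lipschitz-in-$\alpha$ / Grönwall-free geometric-series argument.
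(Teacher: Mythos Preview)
Your proposal is correct and follows essentially the same approach as the paper: derive the contraction estimate $\sup_t\|\{y_j^{n+1}(t)\}-\{y_j^n(t)\}\|_{\ell^\infty} \leq \gamma_{\Dt}\sup_t\|\{y_j^{n}(t)\}-\{y_j^{n-1}(t)\}\|_{\ell^\infty}$ with $\gamma_{\Dt}=\tfrac18\|\alpha'\|_\infty G_\infty\Dt^2$ via the Lipschitz bound on $\alpha$ and $\sum_m V_{m+\frac12,\xi}(\tau_k^*)(\xi_{m+1}-\xi_m)\leq G_\infty$, then iterate geometrically. Two small remarks: (i) the equality $V_\infty(\tau_k^*)=V_\infty(0)$ should be an inequality $\leq$ (energy may already have dropped at earlier steps), though your final bound by $H_\infty=G_\infty$ is unaffected; (ii) your claim that $\Dt_k\leq\Dt$ ``by construction'' holds precisely because the hypothesis $l\geq 2$ forces you into the first branch of Algorithm~\ref{alg:pseudocode}---worth stating explicitly, since in the $l=1$ branch $\Dt_k$ can exceed $\Dt$.
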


\begin{proof}
Recall that $\{\hat{X}_{j}^{n}(\tau_k^*)\} = \{\hat{X}_{j}(\tau_k^*)\}$ for all $n \geq 1$. Therefore,  after combining \eqref{eq:numExactIteration} with \eqref{eq:recursiveV}--\eqref{eq:recursiveZeta}, it holds, for $t \in [\tau_k^*, \tau_{k+1}^*]$ and $j\in\mathbb{Z}$ that 
\begin{subequations}\label{eq:ftc}
\begin{align}
	\left|y_{j}^{n+1}(t) - y_{j}^n(t) \right|\!  &= \left| \zeta_{j}^{n+1}(t) - \zeta_{j}^n(t) \right| 
	\leq \int_{\tau_k*}^{t} \left|U_{j}^{n+1}(s) - U_{j}^n(s) \right|ds, \label{eq:ftc_y} \\
	\left| U_{j}^{n+1}(t) - U_{j}^n(t) \right| &\leq \frac{1}{4}\sum_{m\in \mathbb{Z}}\int_{\tau_k^*}^t \left|V_{m+\frac12, \xi}^{n+1}(s) - V_{m+\frac12, \xi}^{n}(s) \right|ds(\xi_{m+1}-\xi_m)\label{eq:ftc_U}.
\end{align}
\end{subequations}
Moreover, \eqref{eq:exact_num} and \eqref{eq:numExactIteration} yield, for $n\geq 2$,
\begin{align}\label{eq:iter_Vxis}
	V_{j+\frac12, \xi}^{n+1}(t) - V_{j+\frac12, \xi}^{n}(t) &= \begin{cases}
	\!0, & j \notin \mathcal{I}_{k+\frac{1}{2}}, \\
	\!\!\left(\alpha(y_{j}^{n-1}(\tau_{k+1}^*)) - \alpha (y_j^{n}(\tau_{k+1}^*) ) \right)
	\\ \quad \times V_{j+\frac12, \xi}(\tau_k^*) \chi_{\{q: q \geq \tau_{j+\frac{1}{2}} > \tau_{k}^*\}}(t), & j \in \mathcal{I}_{k+\frac{1}{2}}.  \raisetag{-50pt}
	\end{cases}
\end{align}
By assumption there are $(l-1) \geq 1$ distinct breaking times in $(\tau_k^*, \tau_{k+1}^*)$ and by construction $\tau_{k+1}^* - \tau_k^* \leq \Dt$. Thus, we obtain for $n \geq 2$,
\begin{align}\label{eq:general_recursive}
	\left| y_{j}^{n+1}(t) - y_{j}^n(t) \right| &\leq  \frac{1}{4}\|\alpha ' \|_{\infty}  \sum_{m \in \mathbb{Z}}\int_{\tau_k^*}^t \int_{\tau_k^*}^s \left| y_{m}^{n}(\tau_{k+1}^*) - y_{m}^{n-1}(\tau_{k+1}^*) \right| \nonumber
	\\ & \qquad \qquad \qquad \quad  \times  \chi_{\{q: q \geq \tau_{m+\frac{1}{2}} >\tau_k^*\}}(r)V_{m+\frac12, \xi}(\tau_k^*)   dr ds (\xi_{m+1}-\xi_m)\nonumber
	\\ & \leq\frac{1}{8} \|\alpha ' \|_{\infty} V_{ \infty}(\tau_k^*) \Dt^2 \sup_{s \in [\tau_k^*, \tau_{k+1}^*]} \|\{y_{j}^{n}(s)\} - \{y_{j}^{n-1}(s)\} \|_{\ell^\infty} \nonumber \\ 
	&\leq \gamma_{\Dt} \!\sup_{s \in [\tau_k^*, \tau_{k+1}^*]} \|\{y_{j}^{n}(s)\} - \{y_{j}^{n-1}(s)\} \|_{\ell^\infty},
\end{align}
where we introduced
\begin{equation}\label{eq:gamma_dt}
	\gamma_{\Dt} := \frac{1}{8} \|\alpha ' \|_{\infty}G_{\infty}\Dt^2,
\end{equation}
and used that the total energy $V_\infty(t)$ is nonincreasing and bounded from above by $V_\infty(0)=F_\infty(0)=G_\infty$. Furthermore, due to \eqref{eq:beta_one}, we have 
\begin{align}\label{eq:iter_initial}
	\bigl| y_{j}^{2}(t) &- y_{j}^{1}(t) \bigr| \nonumber
	\\ &\leq \frac{1}{4} \sum_{m \in \mathbb{Z}} \int_{\tau_k^*}^t \int_{\tau_k^*}^s \alpha(y_{m}^1(\tau_{k+1}^*))\chi_{\{q: q \geq \tau_{m+\frac{1}{2}} >\tau_k^*\}}(r)V_{m+\frac12, \xi}(\tau_k^*) dr ds (\xi_{m+1}-\xi_m)\nonumber
	\\ & \leq \frac{1}{8} G_{\infty} \Dt^2\!. \raisetag{-20pt}
\end{align}
Thus, it follows by \eqref{eq:general_recursive} and \eqref{eq:iter_initial}, that for $n\geq1$ 
\begin{equation}\label{eq:iter_y1}
	\sup_{t \in [\tau_k^*, \tau_{k+1}^*]} \| \{y_{j}^{n+1}(t)\} - \{y_{j}^{n}(t)\} \|_{\ell^\infty} \leq \frac{1}{8} G_{\infty}\Dt^2 \gamma_{\Dt}^{n-1}\!.
\end{equation}
Provided that we choose $\Dt$ small enough, we can ensure that $\gamma_{\Dt} < 1$. As a consequence, the right side of \eqref{eq:iter_y1} can be made arbitrarily small by choosing $n$ large enough. Thus, to every $\varepsilon > 0$, there exists an integer $n=M_{\mathrm{it}}^k-1$ such that \eqref{eq:iter_y1} is less than or equal to $\varepsilon$. 
\end{proof}

Finally, we can put an upper bound on the maximum number of iterations needed. Enforcing from now on that $\Dt$ satisfies the upper bound 
\begin{equation}\label{eq:dt}
    \Delta t\leq   \sqrt{  \frac{8 \Delta x}{\|\alpha ' \|_{\infty} G_{\infty}} },
\end{equation}
yields $\gamma_{\Dt}\! \leq\! \Dx$ and \eqref{eq:iter_y1} turns into  
\begin{equation}\label{eq:iter_y2}
	\sup_{t \in [\tau_k^*, \tau_{k+1}^*]} \| \{y_{j}^{n+1}(t)\} - \{y_{j}^{n}(t)\} \|_{\ell^\infty} \leq\frac18 G_{\infty} \Dt^2 \Dx^{n-1} \!, 
\end{equation}
for all $n\geq 1$. Choosing $n=2$ corresponds to carrying out at most $3$ iterations, cf. Proposition~\ref{prop:termination} and introduces an error of order $\mathcal{O}(\Dx^2)$. Therefore, we define, for $t\in (\tau_k^*, \tau_{k+1}^*]$, the sequence $\{X_j(t)\}_{j \in \mathbb{Z}}$ as 
\begin{equation}\label{iterseq}
\{X_j(t)\}_{j \in \mathbb{Z}} =\{X_j^{\mathrm{M}_{\mathrm{it}}^{k}}(t)\}_{j\in \mathbb{Z}}= \{(y_j^{\mathrm{M}_{\mathrm{it}}^{k}}, U_j^{\mathrm{M}_{\mathrm{it}}^{k}}, V_j^{\mathrm{M}_{\mathrm{it}}^{k}}, H_j^{\mathrm{M}_{\mathrm{it}}^{k}})(t)\}_{j \in \mathbb{Z}},
\end{equation}
which satisfies \eqref{eq:termCond} with $\varepsilon=\frac{1}{8} G_{\infty}\Dt^2 \Dx$, and 
where, taking into account Remark~\ref{rem:evolveExact} and Proposition~\ref{prop:termination},
\begin{equation*}
\mathrm{M}_{\mathrm{it}}^{k}=\begin{cases} 1, \quad \text{ if } [\tau_k^*,T] \subset [\hat\tau_m, \hat\tau_{m+1}),\\
 2, \quad  \text{ if } [\tau_k^*, \tau_{k+1}^*]=[\hat \tau_m, \hat \tau_{m+1}],\\
 2 \text{ or } 3, \quad \text{ otherwise}.
 \end{cases}
 \end{equation*}

Finally, we can recover the numerical Lagrangian solution $$X_{\Dx}(t,\xi) = (y_{\Dx}, U_{\Dx}, V_{\Dx}, H_{\Dx})(t,\xi) \quad \text{ for }(t,x)\in [0,T]\times\R$$ by linear interpolation based on $\{X_j(t) \}_{j \in \mathbb{Z}}=\{X_{\Dx}(t, \xi_j)\}_{j\in \mathbb{Z}}$. In symbolic notation, we represent the whole procedure via the numerical solution operator $S_{\Dx, t}$ defined by 
\begin{equation} \label{eq:numSolOP}
    S_{\Dx, t}(X_{\Dx}(0)) := X_{\Dx}(t). 
  \end{equation}

\begin{remark}\label{rem:Invariance}
	The iteration scheme \eqref{eq:numLagrSystemIter}--\eqref{eq:beta} is chosen in such a way that $S_{\Dx, t}$ preserves Definition~\ref{def:LagSet}~\ref{def:importantRelation} and that the breaking times given by $\tau_{\Dx}$ are respected. As a consequence, one can follow the argument in \cite[Lem. 2.3]{AlphaHS} to show that for any $X \in \F^{\alpha}$, which is piecewise linear with nodes located at $\{\xi_j\}_{j \in \mathbb{Z}}$, one has 
	\begin{equation*}
		S_{\Dx, t}(X) \in \F^{\alpha} \quad \text{ for all } t \in [0,T]. 
	\end{equation*}
\end{remark}

Before examining the implementation of the mapping $M$, we consider an example illustrating the possible computational benefit of evolving between successive times in $\{\tau_k^*\}_{k=0}^N$, rather than evolving between successive times in $\{\hat{\tau}_j\}_{j \in \mathcal{J}}$.   
  
 \begin{example}[Cusp data]\label{ex:cuspData}
 Consider the following initial data 
 \begin{align*}
	u(x) &= \begin{cases}
	1, &x < -1, \\
	\left|x\right|^{\frac{2}{3}}\!, & -1\leq x \leq1, \\
	1, & 1 < x,
	\end{cases} \\ 
	F(x) &= \begin{cases}
	0, & x < -1, \\
	\frac{4}{3} \left(1 + \mathrm{sgn}(x)|x|^{\frac{1}{3}}\right)\!, & -1 \leq x \leq 1, \\
	\frac{8}{3}, & 1 < x, \end{cases} \\ 
	\alpha(x) &= \begin{cases}
	\beta, & x < -1, \\
	\beta|x|, & -1 \leq x <0, \\
	0, & 0 \leq x, \end{cases}
\end{align*}
where $\beta \in [0, 1)$. Computing $u_{x}(x)$ for $x \in (-1, 0)$ reveals that the exact solution experiences wave breaking continuously over the time interval $[0, 3]$. 

Let $x_j = -1 + j\Dx$ denote a uniform discretization of $[-1, 1]$ with $\Dx =10^{-k}$ for $k \in \mathbb{N}$. Then there are $10^k$ grid cells covering $[-1, 0]$. Furthermore, $u$ is strictly decreasing on $[-1, 0)$, implying that $Du_{2j} < 0$ for all $j \in \{0, 1, \hdots,  5\cdot 10^{k-1}-1\}$.  Consequently, it follows, after differentiating \eqref{eq:Proj_u} and using the nonnegativity of $q_{2j}$, that at least $5\cdot 10^{k-1} $ linear segments on $[-1, 0]$ have negative slope. However, if we can show that
\begin{equation*}
	2 (Du_{2j})^2 > DF_{\mathrm{ac}, 2j}
\end{equation*}
holds, this will imply that $Du_{2j} + q_{2j} < 0,$ and as a consequence, all the $10^k$ linear segments of $u_{\Dx}$ over $[-1, 0]$ have negative slope. Indeed, we have  
\begin{align*}
	Du_{2j} &= \frac{1}{2\Dx} \left(|x_{2j+2}|^{\frac{2}{3}} - |x_{2j}|^{\frac{2}{3}} \right)
	\\ &= \frac{1}{2\Dx} \left(|x_{2j+2}|^{\frac{1}{3}} - |x_{2j}|^{\frac{1}{3}} \right) \left(|x_{2j+2}|^{\frac{1}{3}} + |x_{2j}|^{\frac{1}{3}} \right)\!, \\
	DF_{\mathrm{ac}, 2j} &= \frac{2}{3\Dx} \left(|x_{2j}|^{\frac{1}{3}} - |x_{2j+2}|^{\frac{1}{3}}\right)\!,
\end{align*}
and moreover, $2\Dx = x_{2j+2} - x_{2j} = (|x_{2j}|^{\frac{1}{3}})^3 -  (|x_{2j+2}|^{\frac{1}{3}})^3$, which combined with $a^3 - b^3 = (a-b)(a^2 +ab +b^2)$ yields
\begin{align*}
	2(Du_{2j})^2 &= \frac{1}{\Dx} \frac{ \left(|x_{2j+2}|^{\frac{1}{3}} - |x_{2j}|^{\frac{1}{3}} \right)^2 \left(|x_{2j+2}|^{\frac{1}{3}} + |x_{2j}|^{\frac{1}{3}} \right)^2}{\left(|x_{2j}|^{\frac{1}{3}} - |x_{2j+2}|^{\frac{1}{3}}\right) \left(|x_{2j}|^{\frac{2}{3}} +|x_{2j}|^{\frac{1}{3}} |x_{2j+2}|^{\frac{1}{3}} + |x_{2j+2}|^{\frac{2}{3}}\right)}
	\\ & \geq \frac{1}{\Dx} \frac{\left(|x_{2j}|^{\frac{1}{3}} - |x_{2j+2}|^{\frac{1}{3}} \right)\left(|x_{2j+2}|^{\frac{1}{3}} + |x_{2j}|^{\frac{1}{3}} \right)^2}{\left( |x_{2j}|^{\frac{2}{3}} + 2|x_{2j}|^{\frac{1}{3}}|x_{2j+2}|^{\frac{1}{3}} +  |x_{2j+2}|^{\frac{2}{3}}\right)}
	\\ &= \frac{1}{\Dx} (|x_{2j}|^{\frac{1}{3}} - |x_{2j+2}|^{\frac{1}{3}}) = \frac{3}{2}DF_{\mathrm{ac}, 2j} > DF_{\mathrm{ac}, 2j}. 
\end{align*}

Assuming evenly distributed breaking times over $[0, 3]$, yields the following approximate internal distance between two successive breaking times
\begin{equation*}
	\hat{\tau}_{j+1} - \hat{\tau}_j \approx \frac{3}{10^k}\!.
\end{equation*}
Suppose we have equality in \eqref{eq:dt}, then  
\begin{equation*}
	\Dt = \sqrt{\frac{3}{\beta}}10^{-\frac{k}{2}}, 
\end{equation*}
which implies that each subinterval of $[0, 3]$ of length $\Dt$ contains on average
\begin{equation*}
	M \approx \frac{10^k \Dt }{3}= \frac{10^{\frac{k}{2}}}{\sqrt{3\beta}}, 
\end{equation*}
distinct breaking times. Consequently, the length of the sequence $\{\tau_k^*\}_{k=0}^N$ is roughly
\begin{equation*}
	N \approx \frac{3}{\Dt} = \sqrt{3\beta}10^{\frac{k}{2}}.
\end{equation*}
When $\Dx=10^{-4}$ and $\beta=\frac{19}{20}$, $\{\hat{\tau}_j \}_{j \in \mathcal{J}}$ has length equal to approximately $10^4$, while $N \approx 165$. Furthermore, $\Dt \approx 1.78 \cdot 10^{-2}$ and each interval $[\tau_k^*, \tau_{k+1}^*]$ contains on average around $59$ distinct breaking times. Numerical simulations conducted in Section~\ref{sec:NumericalExperiments} support these observations.
\end{example}

\subsubsection{Implementation of $M$}
Finally, to recover the numerical solution in Eulerian coordinates we apply the mapping $M$ from Definition~\ref{def:MapM} to $X_{\Dx}(t)$, i.e., 
\begin{equation*}
    \left (u_{\Delta x}, \mu_{\Delta x}, \nu_{\Delta x}\right)(t) = M\!\left( X_{\Dx}(t) \right)\!.
\end{equation*}
As the components of $X_{\Dx}(t)$ are continuous and piecewise linear, $u_{\Dx}(t)$ will be continuous and piecewise linear, while $F_{\Dx}(t)$ and $G_{\Dx}(t)$ are increasing, piecewise linear and left-continuous, with $F_{\Dx}(t) \neq G_{\Dx}(t)$ for $t > 0$ in general. Furthermore, the nodes of $(u_{\Dx}, F_{\Dx}, G_{\Dx})(t)$ are located at the points $\{y_{\Dx}(t, \xi_j) \}_{j \in \mathbb{Z}}=\{y_j(t)\}_{j \in \mathbb{Z}}$. Thus, in principle, applying $M$ amounts to applying a piecewise linear reconstruction based on the Eulerian gridpoints $\{y_{j}(t) \}_{j \in \mathbb{Z}}$. For details about this reconstruction, the interested reader is referred to \cite[Sec. 3.2.4]{AlphaAlgorithm}. 

To summarize, we define the numerical solution as follows.
\begin{definition}\label{def:numSol}
    For any $(u_0, \mu_0,\nu_0)\in \mathcal{D}^\alpha_0$, the numerical $\alpha$-dissipative solution at time $t\in [0,T]$ is given by 
    \begin{align*}
        (u_{\Delta x}, \mu_{\Delta x}, \nu_{\Delta x})(t) &:= T_{\Delta x, t} \circ P_{\Dx} \left((u_0, \mu_0, \nu_0)\right) \\ & = M \circ S_{\Delta x, t} \circ L \circ P_{\Dx} \left( (u_0, \mu_0, \nu_0)\right)\!.
    \end{align*}
\end{definition}

\section{Convergence of the numerical method}\label{sec:ConvergenceMethod}

In order to prove convergence of the presented numerical method, we will follow the same steps as in the last section. That is, we will start by showing that convergence of the initial data in Eulerian coordinates leads to convergence initially in Lagrangian coordinates. Thereafter, it is shown that the numerical solution converges in Lagrangian coordinates for all $t \in [0, T]$, i.e., $X_{\Dx}(t)\to X(t)$  as $\Dx\to 0$, before we establish that this convergence carries over to Eulerian coordinates.

\subsection{Convergence of the numerical initial data}\label{sec:PrelimConv}
We start this section by recalling the convergence results for the projection operator $P_{\Dx}$ from \cite{AlphaAlgorithm}. Then we proceed by showing that this implies convergence of the initial data in Lagrangian coordinates.

For $P_{\Dx}$ the following result has been proven in \cite[Prop. 4.1 and Lem. 4.2]{AlphaAlgorithm}.
 
\begin{prop}\label{prop:ProjectionEul}
    For any $(u,\mu, \nu) \in \D_0^{\alpha}$, let $(u_{\Delta x}, F_{\Delta x}, G_{\Delta x})=P_{\Delta x} \left((u,F, G) \right)$, then
 \begin{subequations}
    \begin{align}
        \|u - u_{\Delta x}\|_{\infty} & \leq \left(1 + \sqrt{2} \right)\!\sqrt{F_{\mathrm{ac}, \infty}} \Delta x^{\frac{1}{2}}, \label{eq:proj_uinfty}\\
        \|u - u_{\Delta x}\|_{2} & \leq \sqrt{2}\left(1+\sqrt{2}\right)\!\sqrt{F_{\mathrm{ac}, \infty}} \Delta x, \label{eq:proj_uL2}  \\ 
        \| F - F_{\Delta x}\|_p &\leq 2 F_{\infty}\Delta x^{\frac{1}{p}},  \quad \text { for } p=1,2, \label{eq:proj_FLp} \\
        \|G - G_{\Dx} \|_p & \leq 2 G_{\infty}\Dx^{\frac{1}{p}}, \quad \text { for } p=1,2. \label{eq:proj_GLp} 
       \end{align}
        \end{subequations}
       Moreover, it holds, as $\Dx \rightarrow 0$, that 
       \begin{equation}
          u_{\Dx, x}  \rightarrow u_x \text{ in } L^2(\R). \label{eq:proj_uxL2}
           \end{equation}
\end{prop}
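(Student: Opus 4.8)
The plan is to prove each of the bounds \eqref{eq:proj_uinfty}--\eqref{eq:proj_uxL2} directly from the explicit formulas for $P_{\Dx}$ in Definition~\ref{def:ProjOp}, exploiting the fact that the projected data is piecewise linear with nodes at the $x_j$ and that the construction is engineered to preserve the key relation $d\mu_{\Dx,\mathrm{ac}} = u_{\Dx,x}^2\,dx$ together with the cumulative energies at the grid points. Since the statement is quoted verbatim from \cite[Prop.~4.1 and Lem.~4.2]{AlphaAlgorithm}, the honest ``proof'' is to cite those references; but a self-contained argument would proceed as follows.

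First, for \eqref{eq:proj_FLp} and \eqref{eq:proj_GLp}: by construction \eqref{eq:Proj_F}--\eqref{eq:Proj_Fs}, $F_{\Dx}$ is piecewise linear, increasing, left-continuous, and agrees with $F$ at the even grid points $x_{2j}$ (the singular part is lumped so that $F_{\Dx}(x_{2j+2}) = F(x_{2j+2})$, and $F_{\mathrm{ac}}$ is matched at the even nodes as well). Hence on each interval $[x_{2j},x_{2j+2}]$ both $F$ and $F_{\Dx}$ are increasing with the same endpoint values, so $|F - F_{\Dx}| \le F(x_{2j+2}) - F(x_{2j})$ there, which gives the $L^\infty$-type control; the $L^1$ bound then follows by summing $\int_{x_{2j}}^{x_{2j+2}}|F-F_{\Dx}|\,dx \le 2\Dx\,(F(x_{2j+2})-F(x_{2j}))$ over $j$ and telescoping to $F_\infty$, while the $L^2$ bound interpolates between these via $\|F-F_{\Dx}\|_2^2 \le \|F-F_{\Dx}\|_\infty \|F-F_{\Dx}\|_1$. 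The same argument applies to $G_{\Dx}=F_{\Dx}$.

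Next, for the velocity bounds \eqref{eq:proj_uinfty}--\eqref{eq:proj_uL2}: on $[x_{2j},x_{2j+2}]$ both $u$ and $u_{\Dx}$ are continuous with $u_{\Dx}(x_{2j})=u(x_{2j})$, $u_{\Dx}(x_{2j+2})=u(x_{2j+2})$, and $u_{\Dx}$ is the particular ``two-piece'' linear interpolant described in \eqref{eq:Proj_u}. The oscillation of $u-u_{\Dx}$ on this cell is controlled by the total variation of $u$ there, which — since $u\in E_2$ so $u_x\in L^2$ — is bounded by $\sqrt{2\Dx}\big(\int_{x_{2j}}^{x_{2j+2}}u_x^2\big)^{1/2}$ by Cauchy--Schwarz, and similarly the variation of the piecewise-linear $u_{\Dx}$ is governed by $DF_{\mathrm{ac},2j}$; using $d\mu_{\mathrm{ac}}\le d\mu\le d\nu$ and summing the per-cell energies telescopes to $F_{\mathrm{ac},\infty}$. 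Tracking the constants (the $\sqrt 2$ coming from the midpoint node $x_{2j+1}$ where the two linear pieces meet, and the extra factor from $q_{2j}$) yields the stated $(1+\sqrt2)$ prefactor; the $L^2$-in-space bound is obtained by multiplying the local $L^\infty$ bound by $\sqrt{2\Dx}$ per cell and summing. Finally \eqref{eq:proj_uxL2} follows because $u_{\Dx,x}$ is piecewise constant with $\|u_{\Dx,x}\|_2^2 = \sum_j \big(DF_{\mathrm{ac},2j}\big)\cdot 2\Dx \le F_{\mathrm{ac},\infty}$ uniformly, while $u_{\Dx}\to u$ in $L^2$ by \eqref{eq:proj_uL2}; weak-$L^2$ convergence of $u_{\Dx,x}$ to $u_x$ is immediate from the $L^2$ convergence of $u_{\Dx}$, and since the limit $u_x$ satisfies $\|u_x\|_2^2 = F_{\mathrm{ac},\infty} \ge \limsup \|u_{\Dx,x}\|_2^2$ is not quite automatic one argues instead that $\|u_{\Dx,x}\|_2^2 \to \|u_x\|_2^2$ because $\sum_j DF_{\mathrm{ac},2j}\,2\Dx$ is a Riemann-type sum converging to $F_{\mathrm{ac},\infty}=\int u_x^2\,dx$; weak convergence plus norm convergence gives strong convergence in $L^2$.

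The main obstacle is the last convergence statement \eqref{eq:proj_uxL2}: the pointwise structure of $P_{\Dx}$ only guarantees that $u_{\Dx,x}^2$ has the correct \emph{cell-averaged integral} (namely $DF_{\mathrm{ac},2j}$ over $[x_{2j},x_{2j+2}]$), not that $u_{\Dx,x}$ is close to $u_x$ pointwise, and moreover the sign ambiguity in \eqref{eq:Proj_u} means $u_{\Dx,x}$ can differ substantially from $u_x$ on individual cells where $u$ changes monotonicity. One therefore cannot argue by naive pointwise comparison; the clean route is the weak-convergence-plus-norm-convergence argument above, where the delicate point is verifying $\|u_{\Dx,x}\|_2 \to \|u_x\|_2$, i.e.\ that the projection does not lose energy in the limit — this uses that $\mu$ has no singular part charging the grid in a pathological way together with the definition $DF_{\mathrm{ac},2j} = \tfrac{1}{2\Dx}\int_{x_{2j}}^{x_{2j+2}} u_x^2\,dx$, so the sum is exactly $\int_{\R}u_x^2\,dx$ for every $\Dx$. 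Granting that, everything else is bookkeeping of constants, and I would in practice simply defer to \cite[Prop.~4.1 and Lem.~4.2]{AlphaAlgorithm} for the explicit estimates.
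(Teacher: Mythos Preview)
Your proposal is correct and in fact goes further than the paper: the paper does not prove this proposition at all but simply cites \cite[Prop.~4.1 and Lem.~4.2]{AlphaAlgorithm}, which you correctly identified. Your self-contained sketch is sound --- in particular, the key observation for \eqref{eq:proj_uxL2} that $\|u_{\Dx,x}\|_2^2 = \sum_j 2\Dx\, DF_{\mathrm{ac},2j} = F_{\mathrm{ac},\infty} = \|u_x\|_2^2$ exactly for every $\Dx$, so that weak convergence plus equality of norms gives strong $L^2$ convergence, is precisely the right mechanism.
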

The convergence in Proposition~\ref{prop:ProjectionEul} is transported to Lagrangian coordinates via the mapping $L$, from Definition~\ref{def:MapL}, in the following way. 

\begin{lemma}\label{lem:intLagr}
Given $(u, \mu, \nu) \in \D_0^{\alpha}$, let $X = \left(y, U, V, H \right) = L \left( (u, \mu, \nu) \right)$ and $X_{\Delta x}= (y_{\Delta x}, U_{\Delta x}, V_{\Delta x}, H_{\Delta x}) = L \circ P_{\Delta x} \left( (u, \mu, \nu) \right)$, then we have 
    \begin{subequations}
    \begin{align}
        \|y - y_{\Delta x} \|_{\infty} &\leq 2\Delta x, \label{eq:initial_approx_y}\\
        \|U - U_{\Delta x} \|_{\infty} & \leq \left(1 + 2\sqrt{2} \right)\!\sqrt{F_{\mathrm{ac}, \infty}}\Delta x^{\frac{1}{2}},\label{eq:initial_approx_U} \\
        \|H - H_{\Delta x} \|_{\infty} & \leq 2 \Delta x. \label{eq:initial_approx_H}
     \end{align}
     \end{subequations}
     Furthermore, as $\Dx \rightarrow 0$, 
     \begin{subequations}
     \begin{align}
        y_{\Delta x, \xi} & \rightarrow y_{\xi} \text{ in } L^1(\R) \cap L^2(\R),  \label{eq:convergence_y_xi}\\
             U_{\Delta x, \xi} &\rightarrow U_{\xi} \text{ in } L^2(\R), \label{eq:convergence_U_xi} \\ 
            H_{\Delta x, \xi} &\rightarrow H_{\xi} \text{ in } L^1(\R) \cap L^2(\R). \label{eq:convergence_H_xi}
    \end{align}
    \end{subequations}
\end{lemma}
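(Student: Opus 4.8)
The strategy is to transport each estimate and convergence statement from Eulerian to Lagrangian coordinates by unwinding the explicit formulas in Definition~\ref{def:MapL}. Since we have restricted to $\D_0^\alpha$, we have $\mu=\nu$, hence $F=G$ and $\frac{d\mu}{d\nu}\equiv 1$, which simplifies \eqref{eq:L_eq4} to $V(\xi)=\int_{-\infty}^\xi H_\xi(\eta)\,d\eta = H(\xi)$; the same holds for the projected data since $P_{\Dx}$ maps $\D_0^\alpha$ into itself and sets $G_{\Dx}=F_{\Dx}$. So effectively we only need to control $y-y_{\Dx}$, $U-U_{\Dx}$, and $H-H_{\Dx}$, and the statements about $V$ follow.

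First I would handle the sup-norm bounds \eqref{eq:initial_approx_y}–\eqref{eq:initial_approx_H}. For $y$: from \eqref{eq:L_eq1}, $y(\xi)$ and $y_{\Dx}(\xi)$ are both characterized implicitly by $x + F(x) = \xi$ (resp. $x + F_{\Dx}(x) = \xi$) up to the usual left-continuous/sup conventions. Using the standard Lipschitz-type argument for such generalized inverses (as in \cite{HSConservative, AlphaAlgorithm}), one gets $\|y - y_{\Dx}\|_\infty \le \|F - F_{\Dx}\|_\infty$ plus a term controlling the jump size, but a cleaner route is: since $x \mapsto x + F(x)$ is strictly increasing with increments bounded below by the identity, $|y(\xi) - y_{\Dx}(\xi)|$ is bounded by the horizontal discrepancy between the two graphs, which is at most $\|F - F_{\Dx}\|_\infty$. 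Combined with the projection structure — $F_{\Dx}$ agrees with $F$ at the even gridpoints in the singular part and the $L^\infty$ deviation within a cell is $O(\Dx)$ — this gives the bound $2\Dx$. Then \eqref{eq:initial_approx_H} follows immediately from \eqref{eq:L_eq3}, $H = \xi - y$, so $\|H - H_{\Dx}\|_\infty = \|y - y_{\Dx}\|_\infty \le 2\Dx$. For \eqref{eq:initial_approx_U}, write $U(\xi) - U_{\Dx}(\xi) = u(y(\xi)) - u_{\Dx}(y_{\Dx}(\xi))$ and split via the triangle inequality into $|u(y(\xi)) - u(y_{\Dx}(\xi))| + |u(y_{\Dx}(\xi)) - u_{\Dx}(y_{\Dx}(\xi))|$; the second term is $\le \|u - u_{\Dx}\|_\infty \le (1+\sqrt2)\sqrt{F_{\mathrm{ac},\infty}}\Dx^{1/2}$ by \eqref{eq:proj_uinfty}, and the first is bounded using the $1/2$-Hölder continuity of $u$ (which holds since $u_x \in L^2$, with Hölder constant controlled by $\sqrt{F_{\mathrm{ac},\infty}}$) applied to $|y(\xi) - y_{\Dx}(\xi)| \le 2\Dx$, giving $\sqrt{F_{\mathrm{ac},\infty}}\sqrt{2\Dx} = \sqrt{2}\sqrt{F_{\mathrm{ac},\infty}}\Dx^{1/2}$; adding the constants yields $(1 + 2\sqrt2)\sqrt{F_{\mathrm{ac},\infty}}\Dx^{1/2}$.

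Next, for the $L^1\cap L^2$ convergence of the derivatives \eqref{eq:convergence_y_xi}–\eqref{eq:convergence_H_xi}. Here I would again use $H_\xi = 1 - y_\xi$ (from $H = \xi - y$) so that \eqref{eq:convergence_H_xi} is equivalent to \eqref{eq:convergence_y_xi}. For $y_\xi$, from the implicit relation $y(\xi) + F(y(\xi)) = \xi$ one obtains $y_\xi = \frac{1}{1 + F'(y)}$ wherever things are differentiable; the key point is that $y_\xi$ and $y_{\Dx,\xi}$ are expressible through $u_x$ and $u_{\Dx,x}$ (since $d\mu_{\mathrm{ac}} = u_x^2\,dx$), so the $L^2$ convergence $u_{\Dx,x} \to u_x$ from \eqref{eq:proj_uxL2}, together with the $L^\infty$ control already established and a change-of-variables argument pushing everything onto a common domain, should yield $y_{\Dx,\xi} \to y_\xi$ in $L^2$; the $L^1$ convergence then follows since $0 \le y_\xi, y_{\Dx,\xi} \le 1$ and both differ from $1$ only on sets of finite measure (where $\mu$ lives), so an $L^1$-vs-$L^2$ interpolation or dominated convergence on the relevant bounded-in-measure set applies. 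For \eqref{eq:convergence_U_xi}, differentiate \eqref{eq:L_eq2}: $U_\xi = u_x(y)\, y_\xi$, and similarly $U_{\Dx,\xi} = u_{\Dx,x}(y_{\Dx})\, y_{\Dx,\xi}$; then $\|U_\xi - U_{\Dx,\xi}\|_2$ is controlled by combining the $L^2$ convergence of $u_{\Dx,x}$, the $L^2$ (indeed $L^1\cap L^2$) convergence of $y_{\Dx,\xi}$, the uniform $L^\infty$ bound $|y_{\Dx,\xi}| \le 1$, and the $L^\infty$ convergence $y_{\Dx} \to y$ to handle the composition; a standard product-of-converging-sequences estimate closes it.

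The main obstacle I anticipate is the composition/change-of-variables bookkeeping in the derivative convergences: $u_x$ and $u_{\Dx,x}$ are evaluated at the \emph{different} arguments $y(\xi)$ and $y_{\Dx}(\xi)$, and $u_x$ is only an $L^2$ function (no pointwise continuity), so one cannot naively say $u_x(y_{\Dx}(\xi)) \to u_x(y(\xi))$. The resolution is to pull everything back to Eulerian coordinates via the substitution $x = y(\xi)$ (with $d\xi = (1 + F'(y))\,dy = dy/y_\xi$ where meaningful, and separate care on the flat pieces of $y$ corresponding to atoms of $\mu$), reducing the Lagrangian $L^2$ norms to Eulerian $L^2$ norms of $u_x - u_{\Dx,x}$ weighted by bounded factors, at which point \eqref{eq:proj_uxL2} does the work. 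This is exactly the kind of argument carried out in \cite[Lem. 4.2]{AlphaAlgorithm} for the constant-$\alpha$ case, and since the projection operator and the mapping $L$ are literally the same here, that proof should transfer essentially verbatim — the $\alpha \in W^{1,\infty}$ generalization does not affect the \emph{initial} transformation $L \circ P_{\Dx}$ at all.
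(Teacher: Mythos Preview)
Your proposal is correct and matches the paper's approach: the paper itself gives no proof but simply refers to \cite[Lem.~4.4--4.5]{AlphaAlgorithm} and \cite[Sec.~5]{EquivalenceEulerLagrange}, precisely because, as you observe, $L$ and $P_{\Dx}$ are unchanged from the constant-$\alpha$ setting and the argument transfers verbatim. Your sketch of the sup-norm bounds via the generalized-inverse/H\"older splitting and of the derivative convergences via change of variables back to Eulerian coordinates is exactly the content of those cited lemmas (note the paper cites Lem.~4.4--4.5 rather than~4.2).
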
 
For a proof the interested reader is referred to \cite[Lem. 4.4--4.5]{AlphaAlgorithm} and \cite[Sec. 5]{EquivalenceEulerLagrange}. 

In order to obtain convergence with respect to the metric $d$ from Definition~\ref{def:metric}, it remains to show the following result.

\begin{prop}\label{prop:conv_gUHxi}
Given $(u, \mu, \nu) \in \D_0^{\alpha}$, let $X= (y, U, V, H) = L \left((u, \mu, \nu) \right)$ and $X_{\Delta x} = \left(y_{\Delta x}, U_{\Delta x}, V_{\Delta x}, H_{\Delta x} \right) = L \circ P_{\Delta x} \left((u, \mu, \nu ) \right)$, then, as $\Dx \rightarrow 0$,  
\begin{subequations}
\begin{align}
    g_1(X_{\Delta x}) &\rightarrow g_1(X) \text{ in } L^1(\R) \cap L^2(\R) \label{eq:conv_g}, 
    \\
    g_2(X_{\Delta x}) & \rightarrow g_2(X) \text{ in } L^2(\R) \label{eq:conv_g2},
    \\ 
    g_3(X_{\Delta x}) & \rightarrow g_3(X) \text{ in } L^2(\R) \label{eq:conv_g3},  \\ 
    U_{\Dx}H_{\Dx, \xi} & \rightarrow U H_{\xi} \text{ in } L^2(\R) \label{eq:conv_UHxi}. 
\end{align}
\end{subequations}
\end{prop}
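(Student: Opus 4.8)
The plan is to leverage the strong convergences already recorded in Lemma~\ref{lem:intLagr} and Proposition~\ref{prop:ProjectionEul}, together with the pointwise bounds that are available because both $X$ and $X_{\Dx}$ lie in $\F_0^{\alpha,0}$. Since $(u,\mu,\nu)\in\D_0^\alpha$ forces $\mu=\nu$, Definition~\ref{def:MapL} gives $V_\xi=H_\xi$ a.e., and as $P_{\Dx}$ maps $\D_0^\alpha$ into itself (Definition~\ref{def:ProjOp}) the same holds for $X_{\Dx}$; moreover $y_\xi+H_\xi=1$ (and $y_{\Dx,\xi}+H_{\Dx,\xi}=1$), so $0\le y_\xi,\,V_\xi=H_\xi\le 1$ and $|U_\xi|\le\tfrac12$, with the analogous bounds for the discretized quantities, and $\|U_{\Dx}\|_\infty$ uniformly bounded by \eqref{eq:proj_uinfty}--\eqref{eq:initial_approx_U}. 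One also checks from \eqref{eq:Proj_Fac}--\eqref{eq:Proj_Fs} that $F_{\Dx}$ agrees with $F$ at the even gridpoints $x_{2j}$, which in particular gives $H_{\Dx,\infty}=H_\infty$. It suffices to prove each asserted limit along an arbitrary subsequence of $\Dx\to0$; passing to such a subsequence we may assume $y_{\Dx,\xi}\to y_\xi$, $U_{\Dx,\xi}\to U_\xi$, $H_{\Dx,\xi}\to H_\xi$ pointwise a.e.\ (from the $L^2$-convergences in Lemma~\ref{lem:intLagr}) and $y_{\Dx}\to y$, $U_{\Dx}\to U$ uniformly.

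With this, \eqref{eq:conv_UHxi} is immediate by writing $U_{\Dx}H_{\Dx,\xi}-UH_\xi=(U_{\Dx}-U)H_{\Dx,\xi}+U(H_{\Dx,\xi}-H_\xi)$ and using $\|U_{\Dx}-U\|_\infty\to0$, $\|H_{\Dx,\xi}\|_2$ bounded, and \eqref{eq:convergence_H_xi}. For \eqref{eq:conv_g}--\eqref{eq:conv_g3} the three differences $g_i(X_{\Dx})-g_i(X)$ are bounded in $L^\infty$ by the estimates above, so it is enough to prove $L^1$-convergence and then upgrade via $\|f\|_2^2\le\|f\|_\infty\|f\|_1$. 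Splitting each $g_i(X_{\Dx})-g_i(X)$ into a term of the shape $(\,\cdot\,)(H_{\Dx,\xi}-H_\xi)$ or $(\,\cdot\,)(U_{\Dx,\xi}-U_\xi)$ plus bounded factors (which vanishes in $L^1$ by Lemma~\ref{lem:intLagr}, after also using that $\alpha$ is Lipschitz and $y_{\Dx}\to y$ uniformly) and a term carrying the indicator difference $\chi_{\Omega_d(X_{\Dx})}-\chi_{\Omega_d(X)}$, everything reduces to showing
\[ \int_{\Omega_d(X_{\Dx})\,\triangle\,\Omega_d(X)}\bigl(H_\xi+U_\xi^2\bigr)\,d\xi\ \longrightarrow\ 0 .\]
On $\{U_\xi>0\}\cup\{U_\xi<0\}$ the pointwise convergence $U_{\Dx,\xi}\to U_\xi$ forces $\chi_{\Omega_d(X_{\Dx})}\to\chi_{\Omega_d(X)}$ a.e., and dominated convergence (majorant $H_\xi+U_\xi^2\in L^1$) disposes of that part. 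On $\{U_\xi=0\}$ the $U_\xi^2$ contribution is zero; the relation $y_\xi V_\xi=U_\xi^2$ with $y_\xi+V_\xi=1$ gives $y_\xi\in\{0,1\}$ a.e.\ there, hence $H_\xi=V_\xi\in\{0,1\}$, and the part $\{U_\xi=0,\ H_\xi=0\}$ contributes nothing. Thus the entire proposition comes down to proving $|P\cap\Omega_d(X_{\Dx})|\to0$, where $P:=\{\xi:\,U_\xi=0,\ y_\xi=0\}$.

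Establishing that last claim is the main obstacle, and it is where the singular parts of the data genuinely enter. Since $y$ is Lipschitz with $y_\xi=0$ on $P$, the set $P$ is a countable disjoint union of intervals $[a_i,a_i+c_i]$ on which $y\equiv x_i$ for a point mass location $x_i$ of $\mu=\nu$, with $c_i=\mu(\{x_i\})$, $a_i=x_i+\nu((-\infty,x_i))$, and $\sum_i c_i\le\mu(\R)<\infty$. Because $F_{\Dx}$ matches $F$ at the even gridpoints, the singular part of $\mu_{\Dx}=\nu_{\Dx}$ is carried by even gridpoints, so (for $\Dx$ small enough that the masses $c_1,\dots,c_N$ sit in distinct cells) the mass $c_i$ is placed at some $x_{2j_i}\in(x_i-2\Dx,x_i]$, and $L\circ P_{\Dx}$ sends it to an interval $[a_{\Dx,i},a_{\Dx,i}+c_i]$ with $a_{\Dx,i}=x_{2j_i}+F(x_{2j_i})$ on which $y_{\Dx,\xi}=0$, hence $U_{\Dx,\xi}=0$; consequently $P_i\cap\Omega_d(X_{\Dx})\subseteq P_i\setminus[a_{\Dx,i},a_{\Dx,i}+c_i]$. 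From $|x_{2j_i}-x_i|\le2\Dx$ and $|F(x_{2j_i})-F(x_i)|\to0$ (left-continuity of $F$, as $x_{2j_i}\uparrow x_i$) we get $|a_{\Dx,i}-a_i|\to0$, so $|P_i\setminus[a_{\Dx,i},a_{\Dx,i}+c_i]|\le 2|a_{\Dx,i}-a_i|\to0$ for each $i$; a dominated-sum argument over $i$, using $\sum_i c_i<\infty$, then yields $|P\cap\Omega_d(X_{\Dx})|\to0$. Since every subsequence admits such a further subsequence along which all the limits hold, the full sequences converge, which proves the proposition.
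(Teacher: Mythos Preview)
Your handling of \eqref{eq:conv_UHxi}, \eqref{eq:conv_g2}, and \eqref{eq:conv_g3} is essentially sound, though more circuitous than the paper's. The paper works directly in $L^2$ and exploits a simple sign observation: on $\Omega_{c,d}=\Omega_c(X)\cap\Omega_d(X_{\Dx})$ one has $U_\xi\ge 0>U_{\Dx,\xi}$, hence $U_{\Dx,\xi}^2\le(U_{\Dx,\xi}-U_\xi)^2$, and symmetrically on $\Omega_{d,c}$. This immediately controls the cross terms by $\|U_\xi-U_{\Dx,\xi}\|_2^2$, without any need for subsequences, pointwise a.e.\ convergence, or the $L^1\!\to\!L^2$ upgrade. (Incidentally, your $L^1$ route is shaky for $g_2,g_3$: the indicator-difference term carries $|U_\xi|$, which need not be in $L^1$, so the reduction to $\int_{\Omega_d\triangle}U_\xi^2$ is not the $L^1$ norm you claim to be computing.)

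For \eqref{eq:conv_g} there is a genuine gap. Your assertion that ``$P$ is a countable disjoint union of intervals $[a_i,a_i+c_i]$ on which $y\equiv x_i$ for a point mass location $x_i$'' is false in general. The set $P=\{y_\xi=0\}$ encodes the entire singular part of $\nu$, not just its atoms: if $\nu_{\mathrm{sing}}$ has a singular continuous component (say a Cantor measure), then $y$ is strictly increasing there, so $y$ is constant on no interval, yet $y_\xi=0$ on a set of positive measure --- a fat-Cantor-type set in $\xi$, not a union of intervals. Your matching argument (pair each $P_i$ with a flat interval of $y_{\Dx}$) then has nothing to pair with, and the conclusion $|P\cap\Omega_d(X_{\Dx})|\to0$ is left unproven. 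Note that $U_{\Dx,\xi}\to U_\xi=0$ in $L^2$ on $P$ does \emph{not} force $\chi_{\{U_{\Dx,\xi}<0\}}\to0$ in measure on $P$, since $U_{\Dx,\xi}$ may well be negative and small there. The paper does not give a self-contained proof of \eqref{eq:conv_g} either --- it defers to \cite[Prop.~4.7]{AlphaAlgorithm} with the two elementary inequalities for $\alpha(2-\alpha)$ supplied to accommodate nonconstant $\alpha$ --- but your reduction to $|P\cap\Omega_d(X_{\Dx})|\to0$ is not an alternative proof unless you can actually establish that limit for general $\nu_{\mathrm{sing}}$.
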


\begin{proof}
The proof of \eqref{eq:conv_g}, which we do not present here, is a slight modification of the one of \cite[Prop 4.7]{AlphaAlgorithm}, and relies on the following two inequalities, which hold for any $\alpha\in W^{1,\infty}(\mathbb{R},[0,1))$,
\begin{equation*}
	\bigl | \alpha(x)\bigl(2- \alpha(x) \bigr) - \alpha(z)\bigl(2 -\alpha(z)\bigr) \! \! \bigr | \leq
	\left| \int_{\alpha(z)}^{\alpha(x)}2(1-s)ds \right| \leq 2 \|\alpha ' \|_{\infty} |x-z|,
\end{equation*}
and
\begin{equation*}
	\alpha(x)(2-\alpha(x)) \leq 1. 
\end{equation*}

Next, recall $g_3(X)$ given by \eqref{eq:g_3}, and introduce 
\begin{equation}\label{eq:notation}
	\Omega_{k, m} = \Omega_k(X) \cap \Omega_m(X_{\Delta x}), \quad \text{for } k, m \in \{c, d \}.
\end{equation}
Then, it follows that
\begin{subequations}
\begin{align}
	\|g_3(X) - g_3(X_{\Dx}) \|_2^2 &= \|\alpha'\|_{\infty}^2\int_{\Omega_{c, d}}\! U_{\Dx}^2 U_{\Dx, \xi}^2(\xi)d\xi \label{eq:est_g31}
	\\ & \quad +  \|\alpha'\|_{\infty}^2\int_{\Omega_{d, c}} \!U^2 U_{\xi}^2(\xi)d\xi \label{eq:est_g32}
	\\ & \quad +  \|\alpha'\|_{\infty}^2\int_{\Omega_{d, d}} \!(UU_{\xi} - U_{\Dx}U_{\Dx, \xi} )^2 (\xi)d\xi \label{eq:est_g33}. 
\end{align}
\end{subequations} 
The two terms \eqref{eq:est_g31} and \eqref{eq:est_g32} have a similar structure and thus we only derive an estimate for \eqref{eq:est_g31}.  Combining \eqref{eq:L_eq2} and \eqref{eq:proj_uinfty} yields
\begin{equation}\label{eq:estUDx}
 \|U_{\Dx} \|_{\infty}\! \leq \|u_{\Dx} \|_{\infty}\leq \|u\|_\infty + \|u_{\Dx}-u\|_\infty\leq \|u\|_\infty +\left(1 + \sqrt{2} \right)\!\sqrt{F_{\mathrm{ac}, \infty}} \sqrt{\Delta x}.
 \end{equation}
Furthermore, $U_{\xi}(\xi) \geq 0$, while $U_{\Dx, \xi}(\xi) <  0$ for all $\xi\in \Omega_{c, d}$ and hence 
\begin{align}\nonumber
	\int_{\Omega_{c, d}}\!\!\! \!U_{\Dx}^2 U_{\Dx, \xi}^2(\xi)d\xi  & \leq \|U_{\Dx}\|_{\infty}^2 \int_{\Omega_{c, d}}\! \!\!(U_{\Dx, \xi} - U_{\xi} )^2 (\xi)d\xi \\ 
	& \label{eq:est_g3First} \leq\! \|U_{\Dx}\|_{\infty}^2 \|U_{\Dx, \xi} - U_{\xi}\|_2^2.
\end{align} 

For the term \eqref{eq:est_g33}, we use $(a+b)^2 \leq 2(a^2+b^2)$ for any $a, b \in \R$. In particular, choosing $a=U(U_{\xi} - U_{\Dx, \xi})$ and $b=U_{\Dx, \xi}(U - U_{\Dx})$ we have
\begin{align}\label{eq:est_g3Final}
	\int_{\Omega_{d, d}} \!(UU_{\xi} - U_{\Dx}U_{\Dx, \xi})^2(\xi)d\xi  &\leq 2 \int_{\Omega_{d, d}} U^2 (U_{\xi} - U_{\Dx, \xi})^2(\xi) d\xi \nonumber
	\\ & \quad + 2\int_{\Omega_{d, d}} U_{\Dx, \xi}^2 (U - U_{\Dx})^2 (\xi)d\xi \nonumber
	\\ & \leq 2 \|U\|_{\infty}^2 \|U_{\xi} - U_{\Dx, \xi} \|_2^2 + 2F_{\infty} \|U - U_{\Dx}\|_{\infty}^2, 
\end{align}
where we in the last step used that Definition~\ref{def:LagSet}~\ref{def:importantRelation} and $0\leq y_{\Dx, \xi} \leq 1$, which holds by Definition~\ref{def:MapL}, imply 
\begin{equation*}
	\int_{\Omega_{d, d}} \l U_{\Dx, \xi}^2(\xi)d\xi \leq \int_{\R}y_{\Dx, \xi}V_{\Dx, \xi}(\xi)d\xi \leq F_{\infty}.
\end{equation*}
Combining \eqref{eq:est_g3First}--\eqref{eq:est_g3Final} with Lemma~\ref{lem:intLagr} finishes the proof of \eqref{eq:conv_g3}. Next, observe that $H_\infty= H_{\Dx, \infty}$, and \eqref{eq:conv_g2} is therefore an immediate consequence of the above estimates. 

In order to prove \eqref{eq:conv_UHxi}, we combine Lemma~\ref{lem:intLagr} with $0\leq H_{\Delta x, \xi}\leq 1$, which holds by Definition~\ref{def:MapL}, and 
\begin{align*}
	\int_{\R}\left(UH_{\xi} - U_{\Dx}H_{\Dx, \xi} \right)^2(\xi)d\xi & \leq 2\int_{\R}U^2(H_{\xi} - H_{\Dx, \xi})^2(\xi)d\xi \\
	& \quad + 2 \int_{\R}H_{\Dx, \xi}^2 (U- U_{\Dx})^2(\xi)d\xi. \qedhere
\end{align*}
\end{proof}

After combining Definition~\ref{def:metric}, Lemma~\ref{lem:intLagr}, and Lemma~\ref{prop:conv_gUHxi} we end up with the following result, which establishes the convergence of the numerical initial data in Lagrangian coordinates. 

\begin{corollary}\label{cor:convini}
Given $(u, \mu, \nu) \in \D_0^{\alpha}$, let $X= (y, U, V, H) = L \left((u, \mu, \nu) \right)$ and $X_{\Delta x} = \left(y_{\Delta x}, U_{\Delta x}, V_{\Delta x}, H_{\Delta x} \right) = L \circ P_{\Delta x} \left((u, \mu, \nu ) \right)$, then
\begin{equation*}
d(X,X_{\Dx})\to 0 \quad \text{ as }\quad \Dx\to 0.
\end{equation*}
\end{corollary}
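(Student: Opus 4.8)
The plan is to read off the result directly from Definition~\ref{def:metric} by showing that each of the ten summands appearing in $d(X, X_{\Dx})$ tends to zero as $\Dx \to 0$. This is entirely bookkeeping once the earlier convergence results are in hand. Concretely, I would first group the terms and match each to a statement already proven:
\begin{itemize}
\item[] the terms $\|y - y_{\Dx}\|_\infty$ and $\|U - U_{\Dx}\|_\infty$ vanish by the explicit $\mathcal{O}(\Dx)$ and $\mathcal{O}(\Dx^{1/2})$ bounds in \eqref{eq:initial_approx_y} and \eqref{eq:initial_approx_U} of Lemma~\ref{lem:intLagr};
\item[] the terms $\|H_\xi - \hat H_\xi\|_1$ and $\|H_\xi - \hat H_\xi\|_2$ vanish by \eqref{eq:convergence_H_xi};
\item[] the terms $\|y_\xi - \hat y_\xi\|_2$ and $\|U_\xi - \hat U_\xi\|_2$ vanish by \eqref{eq:convergence_y_xi} and \eqref{eq:convergence_U_xi};
\item[] the term $\|\alpha'\|_\infty \|UH_\xi - U_{\Dx}H_{\Dx,\xi}\|_2$ vanishes by \eqref{eq:conv_UHxi};
\item[] the terms $\|g_2(X) - g_2(X_{\Dx})\|_2$ and $\|g_3(X) - g_3(X_{\Dx})\|_2$ vanish by \eqref{eq:conv_g2} and \eqref{eq:conv_g3}.
\end{itemize}

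The only summand needing a short additional argument is $\|g_1(X) + y_\xi - g_1(X_{\Dx}) - y_{\Dx,\xi}\|_2$. Here I would use the triangle inequality to split it as $\|g_1(X) - g_1(X_{\Dx})\|_2 + \|y_\xi - y_{\Dx,\xi}\|_2$; the first term goes to zero by \eqref{eq:conv_g} (which in particular gives $L^2$ convergence), and the second by \eqref{eq:convergence_y_xi}. Assembling all ten estimates and invoking the finiteness of the sum in Definition~\ref{def:metric} gives $d(X, X_{\Dx}) \to 0$.

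Strictly speaking there is nothing hard here — the work has been front-loaded into Lemma~\ref{lem:intLagr} and Proposition~\ref{prop:conv_gUHxi}. The one point requiring a moment's care is making sure that each norm appearing in the metric is covered by a convergence statement in the \emph{correct} function space: for instance, the metric uses $\|y_\xi - \hat y_\xi\|_2$ and $\|H_\xi - \hat H_\xi\|_1$ as well as $\|H_\xi - \hat H_\xi\|_2$, so one must check that Lemma~\ref{lem:intLagr} indeed supplies $L^1 \cap L^2$ convergence for $y_\xi$ and $H_\xi$ (it does, via \eqref{eq:convergence_y_xi} and \eqref{eq:convergence_H_xi}) and $L^2$ convergence for $U_\xi$ (via \eqref{eq:convergence_U_xi}), and that Proposition~\ref{prop:conv_gUHxi} covers $g_1$ in $L^1 \cap L^2$ and $g_2, g_3, UH_\xi$ in $L^2$. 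Once this matching is verified, the proof is a one-line assembly, so the bulk of what remains to write is simply citing Lemma~\ref{lem:intLagr} and Proposition~\ref{prop:conv_gUHxi} term by term.
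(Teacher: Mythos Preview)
Your proposal is correct and matches the paper's approach exactly: the paper presents this corollary as an immediate consequence of combining Definition~\ref{def:metric}, Lemma~\ref{lem:intLagr}, and Proposition~\ref{prop:conv_gUHxi}, which is precisely the term-by-term verification you outline. The only additional step you spell out---splitting the $g_1 + y_\xi$ term via the triangle inequality---is the natural way to handle it and is implicit in the paper's ``combining'' statement.
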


\subsection{Convergence in Lagrangian coordinates}\label{sec:Convergence}
The aim of this section is to prove that $X_{\Dx}(t)\to X(t)$ as $\Dx\to 0$ for all $t\in [0,T]$. Unlike the situation in \cite{AlphaAlgorithm} where $\alpha$ is a constant, and as highlighted in Section~\ref{sec:numericalImp}, we have to approximate the operator $S_t$ by the numerical solution operator $S_{\Dx,t}$. In particular, $S_{\Dx, t}$ involves an iteration scheme, which stops after a finite number of iterations, and is based on approximating the amount of energy to be removed at wave breaking. A consequence of these two {\it inaccuracies} is that Theorem~\ref{thm:Lipschitz} is not sufficient for establishing convergence.

To compare $X(t)=S_t(X)$ and $X_{\Dx}(t)=S_{\Dx, t}(X_{\Dx})$, where $S_t$ and $S_{\Dx, t}$ are defined in Definition~\ref{def:SolOPLagr} and \eqref{eq:numSolOP}, respectively, introduce
\begin{equation*}
	\widehat{X}_{\Dx}(t) := S_t(X_{\Dx}).
\end{equation*}
By the triangle inequality and Theorem~\ref{thm:Lipschitz}, we then obtain
\begin{align}\label{eq:triangle}
	d(X(t), X_{\Dx}(t)) &\leq d\!\left(X(t), \widehat{X}_{\Dx}(t) \right) + d\!\left(\widehat{X}_{\Dx}(t), X_{\Dx}(t) \right)\! \nonumber
	\\ & \leq  C(t)e^{D(t)t}d(X, X_{\Dx}) + d(\widehat{X}_{\Dx}(t), X_{\Dx}(t)).
\end{align}
Corollary~\ref{cor:convini} implies that the first term on the right-hand side tends to $0$ as $\Dx \to 0$.  For the second term, on the other hand, no analogue to Theorem~\ref{thm:Lipschitz} is available. However, we can take advantage of the fact that $\widehat{X}_{\Dx}(t)$ and $X_{\Dx}(t)$ share the same initial data, which by \eqref{eq:waveBreakingFunc} and Remark~\ref{rem:Invariance} imply $\hat{\tau}_{\Dx} = \tau_{\Dx}$. This allows us to replace $d$ by a much simpler function $d_s$, which is inspired by \cite[Sec. 3]{NewestLipschitzMetric} and which allows us to argue inductively with respect to the sequence $\{\tau_k^*\}_{k=0}^N$ as we will see.

For any $(X,\hat X)\! \in \! \F^{\alpha} \times \F^{\alpha}$ with $\tau=\hat \tau$, which in turn implies $\Omega_j(X)=\Omega_j(\hat X)$ for $j\in\{c, d\}$, define 
\begin{equation}\label{eq:gg}
	g(X,\hat X)(\xi):= \begin{cases} \vert V_\xi-\hat V_\xi\vert(\xi) , & \xi \in \Omega_c(X),\\
	\vert V_\xi-\hat V_\xi\vert(\xi) + \|\alpha'\|_\infty \min(V_\xi(\xi), \hat V_\xi(\xi)) &\\
	 \qquad \qquad \times \!\left(\vert y-\hat y\vert (\xi)+ \vert U-\hat U\vert(\xi) \right)\!, & \xi \in \Omega_d(X),
\end{cases}
\end{equation}
and let 
\begin{align}\label{eq:ds}
	d_s(X, \hat X)&:=\|y-\hat y\|_\infty+ \|U-\hat U\|_\infty\\ \nonumber
	& \quad + \|y_\xi-\hat y_\xi\|_2+ \|U_\xi-\hat U_\xi\|_2
 	+ \|g(X, \hat X)\|_2.
\end{align}

To begin with, we establish a connection between $d$ and $d_s$ which forms the basis for the remainder of this section.

\begin{lemma}\label{lem:ConstantCs}
	Let $X =L\circ P_{\Dx}((u,\mu,\nu)) \in \F^{\alpha, 0}_0$ for some $(u,\mu,\nu)\in \D_0^\alpha$. Moreover, assume that $\Dx\leq 1$  and let $\widehat{X}_{\Dx}(t) = S_{t}(X)$ and $X_{\Dx}(t)=S_{\Dx, t} (X)$ for $t\in [0,T]$. Then 
	\begin{equation}\label{eq:equivalent}
	 d\!\left(\widehat{X}_{\Dx}(t), X_{\Dx}(t) \right)\leq C_sd_s\!\left(\widehat{X}_{\Dx}(t), X_{\Dx}(t) \right)\!,
	\end{equation}
	where
	\begin{equation}\label{eq:constantCs}
		C_s = 2 + \|\alpha'\|_{\infty} \bigg(\|u\|_{\infty} +  \left(2+ \sqrt{2} + e^{\frac{1}{4}T} \right)\sqrt{G_{\infty}} + \bigg(1 + \frac{1}{4}T \bigg)G_{\infty}\bigg).
	\end{equation}
\end{lemma}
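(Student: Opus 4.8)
The plan is to compare the ten terms appearing in the definition of $d$ (Definition~\ref{def:metric}) one by one with the five terms appearing in $d_s$ (equation~\eqref{eq:ds}), exploiting that $\widehat{X}_{\Dx}(t)$ and $X_{\Dx}(t)$ share the same initial data so that $\tau=\hat\tau$ and hence $\Omega_d(\widehat{X}_{\Dx}(t))=\Omega_d(X_{\Dx}(t))$ and $\Omega_c(\widehat{X}_{\Dx}(t))=\Omega_c(X_{\Dx}(t))$; write these common sets as $\Omega_d$ and $\Omega_c$. Four of the ten terms, namely $\|y-\hat y\|_\infty$, $\|U-\hat U\|_\infty$, $\|y_\xi-\hat y_\xi\|_2$ and $\|U_\xi-\hat U_\xi\|_2$, appear verbatim in $d_s$, so they contribute with constant $1$. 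The remaining six terms must each be bounded by a constant multiple of one of the five $d_s$-terms, and the constants accumulated this way produce $C_s$.

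First I would handle the two terms involving $H_\xi$, namely $\|H_\xi-\hat H_\xi\|_1$ and $\|H_\xi-\hat H_\xi\|_2$. By Definition~\ref{def:LagSet}~(v) (or directly from \eqref{eq:IntroLagrSys3} and $\kappa$) and the relation $V_\xi=\kappa(y)H_\xi$, together with the fact that on $\Omega_d$ one has $\kappa\equiv 1$, I would express $H_\xi-\hat H_\xi$ in terms of $V_\xi-\hat V_\xi$ (on $\Omega_d$ the two differences coincide, and since $X,\hat X$ emanate from $\F_0^{\alpha,0}$-type data the structure of $H$ is controlled); the $\ell^1$ and $\ell^2$ norms of $H_\xi-\hat H_\xi$ are then bounded by $\|g(X,\hat X)\|_2$ up to a constant using that the relevant supports have finite measure bounded in terms of $G_\infty$ (or $H_\infty=G_\infty$) and that $\Dx\le 1$. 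Next, the term $\|g_1(X)+y_\xi-g_1(\hat X)-\hat y_\xi\|_2$ splits over $\Omega_c$ and $\Omega_d$: on $\Omega_c$, $g_1=V_\xi$ so the integrand is $(V_\xi-\hat V_\xi)+(y_\xi-\hat y_\xi)$, bounded by $\|g(X,\hat X)\|_2+\|y_\xi-\hat y_\xi\|_2$; on $\Omega_d$, $g_1(X)=(1-\alpha(y))V_\xi$, so $g_1(X)-g_1(\hat X)=(V_\xi-\hat V_\xi)-(\alpha(y)V_\xi-\alpha(\hat y)\hat V_\xi)$, and after writing $\alpha(y)V_\xi-\alpha(\hat y)\hat V_\xi=\alpha(y)(V_\xi-\hat V_\xi)+\hat V_\xi(\alpha(y)-\alpha(\hat y))$ and using $|\alpha(y)-\alpha(\hat y)|\le\|\alpha'\|_\infty|y-\hat y|$ together with $\hat V_\xi\ge\min(V_\xi,\hat V_\xi)$ and $\alpha<1$, the integrand is controlled pointwise by the two pieces of $g(X,\hat X)$ plus $|y_\xi-\hat y_\xi|$.

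Then I would treat the weighted term $\|\alpha'\|_\infty\|UH_\xi-\hat U\hat H_\xi\|_2$ and the two remaining $g_2,g_3$ terms. All three vanish on $\Omega_c$, so only $\Omega_d$ matters. On $\Omega_d$, $g_2(X)-g_2(\hat X)=\|\alpha'\|_\infty H_\infty(U_\xi-\hat U_\xi)$, giving the bound $\|\alpha'\|_\infty H_\infty\|U_\xi-\hat U_\xi\|_2$ — but wait, $g_2$ uses $U_\xi$, not $V_\xi$, so I must instead bound via $\|U_\xi-\hat U_\xi\|_2$ with constant $\|\alpha'\|_\infty H_\infty=\|\alpha'\|_\infty G_\infty$; similarly $g_3(X)-g_3(\hat X)=\|\alpha'\|_\infty(UU_\xi-\hat U\hat U_\xi)$, which I split as $U(U_\xi-\hat U_\xi)+\hat U_\xi(U-\hat U)$ and bound using $\|U\|_\infty\le\|u\|_\infty+(1+\sqrt2)\sqrt{F_{\mathrm{ac},\infty}}\sqrt{\Dx}$ from \eqref{eq:estUDx}, $\int_{\Omega_d}\hat U_\xi^2\le F_\infty\le G_\infty$ (via Definition~\ref{def:LagSet}~(iii) and $0\le y_\xi\le1$), and $\|U-\hat U\|_\infty$; an extra bookkeeping point is that $\|U\|_\infty$ at time $t$ must be propagated forward, which introduces the factor $e^{\frac14 T}$ (from $U_t=\frac12V-\frac14V_\infty$, so $\|U(t)\|_\infty\le\|U(0)\|_\infty e^{t/4}$ roughly, or a polynomial-in-$t$ bound). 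For $UH_\xi-\hat U\hat H_\xi$ I would use the same product-splitting with $0\le\hat H_\xi\le 1$ and $\|H_\xi-\hat H_\xi\|_2$ already bounded above, plus $\|U-\hat U\|_\infty$ and $\int H_\xi\le H_\infty=G_\infty$.

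The main obstacle I anticipate is getting the constant exactly as stated in \eqref{eq:constantCs} rather than merely some finite constant: this requires careful tracking of which $d_s$-term each $d$-term is charged to, avoiding double-counting, and correctly bounding the time-$t$ sup-norm of $U$ (hence the $e^{T/4}\sqrt{G_\infty}$ contribution) and of quantities like $\int V_\xi\,d\xi = V_\infty(t)\le G_\infty$ and $\int H_\xi\,d\xi = H_\infty = G_\infty$, all uniformly on $[0,T]$. The bound $\Dx\le 1$ is used to absorb the $\sqrt{\Dx}$-type corrections coming from \eqref{eq:estUDx} into clean constants, and the hypothesis $X=L\circ P_{\Dx}((u,\mu,\nu))\in\F_0^{\alpha,0}$ (so that $H_\infty=G_\infty=F_\infty$ and $V_\infty(0)=G_\infty$) is what lets every appearance of $F_\infty$, $H_\infty$, $V_\infty$ be replaced by the single quantity $G_\infty$ in $C_s$.
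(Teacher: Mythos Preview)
Your overall strategy---compare the ten terms of $d$ to the five of $d_s$, exploiting $\tau=\hat\tau$ so that $\Omega_c$ and $\Omega_d$ agree---matches the paper's, and your handling of $g_1$, $g_2$, $g_3$ is essentially right. But you have missed the key simplification that makes the argument short and the stated constant $C_s$ come out as written.

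Both $\widehat X_{\Dx}(t)=S_t(X)$ and $X_{\Dx}(t)=S_{\Dx,t}(X)$ start from the \emph{same} $X$, and both evolutions leave $H$ untouched: $H_t=0$ in \eqref{eq:int_ODE4} and $H^n_{j+\frac12,t\xi}=0$ in \eqref{eq:numLagr4it}. Hence $\widehat H_\xi(t)=H_\xi(t)=H_{0,\xi}$ identically, so $\|H_\xi-\widehat H_\xi\|_1=\|H_\xi-\widehat H_\xi\|_2=0$ and the weighted term collapses to
\[
\|\alpha'\|_\infty\|UH_\xi-\widehat U\widehat H_\xi\|_2=\|\alpha'\|_\infty\|H_{0,\xi}(U-\widehat U)\|_2\le\|\alpha'\|_\infty\sqrt{G_\infty}\,\|U-\widehat U\|_\infty,
\]
using $0\le H_{0,\xi}\le 1$ and $\int H_{0,\xi}\,d\xi=G_\infty$. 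Your plan to recover $H_\xi-\widehat H_\xi$ from $V_\xi-\widehat V_\xi$ via $V_\xi=\kappa(y)H_\xi$ is unnecessary and, as stated, vague on $\Omega_c$ where $\kappa$ need not equal $1$; carried out correctly it would only rediscover that the difference is zero, but you would not arrive at the clean constant in \eqref{eq:constantCs}.

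A secondary point: the factor $e^{T/4}$ in $C_s$ does not come from propagating $\|U(t)\|_\infty$. It comes from bounding $\|\widehat U_\xi(t)\|_2$ for the \emph{exact} flow via $(\widehat y_\xi+\widehat H_\xi)(t)\le e^{t/2}$ (cf.\ \eqref{eq:upper_bound_deriv}) together with $\widehat U_\xi^2=\widehat y_\xi\widehat V_\xi$, which yields $\|\widehat U_\xi(t)\|_2\le e^{T/4}\sqrt{G_\infty}$. The sup-norm of the \emph{numerical} $U(t)$ is instead bounded linearly in $t$ by \eqref{eq:upper_boundU}, namely $\|U(t)\|_\infty\le\|u\|_\infty+(1+\sqrt2)\sqrt{G_\infty}\sqrt{\Dx}+\tfrac14 G_\infty T$, after which $\Dx\le 1$ absorbs the $\sqrt{\Dx}$. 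In the $g_3$ splitting these two bounds attach to different factors; keeping straight which is which is what produces the exact form of $C_s$.
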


\begin{proof}
To ease the readability we drop the subindex $\Dx$. 

Since $\widehat X(0)=X=X(0)$, it follows by \eqref{eq:waveBreakingFunc} and \eqref{eq:numBreakFunction} that $\widehat{\tau}= \tau$, and hence $\Omega_j(\widehat{X}(t)) = \Omega_j(X(t))$ for $j\in \{c,d\}$. Thus, by \eqref{eq:g} and \eqref{eq:gg} we have 
\begin{equation*}
|g_1(\widehat{X}(t))(\xi)- g_1(X(t))(\xi)| = g(\widehat{X}(t),X(t))(\xi)\quad \text{ for }\xi \in \Omega_c(X(t)),
\end{equation*}
whereas for $\xi \in \Omega_d(X(t))$ it holds that
\begin{align*}
	|g_1(\widehat{X}(t))(\xi)- g_1(X(t))(\xi)| &\leq |\left(1-\alpha(\widehat{y}(t, \xi))\right)(\widehat{V}_{\xi} - V_{\xi})(t, \xi) |
	\\ & \qquad + \left|\alpha(\widehat{y}(t, \xi)) - \alpha(y(t, \xi))\right|\!V_{\xi}(t, \xi)
	\\ & \leq |\widehat{V}_{\xi} - V_{\xi}|\!(t, \xi) + \|\alpha'\|_{\infty}V_{\xi}(t, \xi)\!\left|\widehat{y} - y\right|\!(t, \xi).
\end{align*}
Note that one can argue similarly with the roles of $\widehat{X}(t)$ and $X(t)$ reversed, and hence 
\begin{equation*}
|g_1(\widehat{X}(t))(\xi)- g_1(X(t))(\xi)| \leq g(\widehat{X}(t),X(t))(\xi)\quad \text{ for } \xi \in \Omega_d(X(t)).
\end{equation*}
Thus, one obtains 
\begin{align*}
	\|g_1(\widehat{X}(t)) + \widehat y_{\xi}(t)&- g_1(X(t)) -y_{\xi}(t)\|_2 \\
& \leq \|g_1(\widehat{X}(t))- g_1(X(t)) \|_2 + \|\widehat y_{\xi}(t)-y_{\xi}(t)\|_2\\
& \leq \|g(\widehat{X}(t),X(t))\|_2+  \|\widehat y_{\xi}(t)-y_{\xi}(t)\|_2.
\end{align*}
 Furthermore, by \eqref{eq:g_2} and $G_\infty=H_{\infty}(t)=\widehat H_{\infty}(t)$ for all $t\in [0,T]$, we have 
	\begin{equation*}
		\|g_2(\widehat{X}(t)) - g_2(X(t))\|_2 \leq \|\alpha'\|_{\infty}G_{\infty}\|\widehat{U}_{\xi}(t) - U_{\xi}(t)\|_2,
	\end{equation*}
and, by \eqref{eq:g_3}, 
	\begin{align}\label{eq:bound_g3}
		\|g_3(\widehat{X}(t)) - g_3(X(t))\|_2 &\leq \|\alpha'\|_{\infty}\|\widehat{U}_{\xi}(t)\|_2 \|\widehat{U}(t) - U(t)\|_{\infty} \nonumber
		\\ & \quad +\|\alpha'\|_{\infty}\|U(t)\|_{\infty} \|\widehat{U}_{\xi}(t) - U_{\xi}(t)\|_2.
	\end{align}
	Moreover, since $\widehat{H}_{\xi}(t) = H_{\xi}(t) = H_{\xi}(0)$ and $0\leq H_{\xi}(0)\leq 1$, we immediately obtain\vspace{-0.07cm}
	\begin{align*}
		\|\alpha'\|_{\infty} \|\widehat{U}\widehat{H}_{\xi}(t) - UH_{\xi}(t)\|_2 &\leq \|\alpha'\|_{\infty} \|H_{\xi}(0)\|_2 \|\widehat{U}(t) - U(t)\|_{\infty}\\
		& \leq  \|\alpha'\|_{\infty} \sqrt{G_\infty} \|\widehat{U}(t) - U(t)\|_{\infty},	\end{align*}
and by Definition~\ref{def:metric}, we end up with 
\begin{align*}
	 d(\widehat{X}(t), X(t)) &\leq \! \left(2+\|\alpha'\|_\infty\Big(\sqrt{G_\infty}+ G_\infty +  \|\hat U_{\xi}(t)\|_2+ \|U(t)\|_\infty\Big)\right) d_s(\widehat{X}(t), X(t)).
\end{align*}

It remains to bound  $\|\widehat{U}_{\xi}(t)\|_2$ and $\|U(t)\|_{\infty}$. Following the proof of \cite[(2.15)]{AlphaHS}, we find, as $\widehat X(0)\in \F_0^{\alpha,0}$,
\begin{equation}\label{eq:upper_bound_deriv}
	(\widehat{y}_{\xi}+ \widehat{H}_{\xi})(t, \xi) \leq e^{\frac{1}{2}t}\left(\widehat y_{\xi}(0, \xi) + \widehat H_{\xi}(0, \xi) \right)= e^{\frac12 t}\!,
\end{equation}
which together with Definition~\ref{def:LagSet}~\ref{def:importantRelation} implies 
\begin{equation}\label{eq:bound_deriv}
	\|\widehat{U}_{\xi}(t) \|_2 \leq e^{\frac{1}{4}T}\sqrt{G_{\infty}}.
\end{equation}

To estimate $\|U(t)\|_{\infty}$ for any $t \in [\tau_k^*, \tau_{k+1}^*]$,  recall that $X(t)$ is defined as the linear interpolant of $\{X_j(t)\}_{j\in \mathbb{Z}}=\{X_j^{M_{\mathrm{it}}}(t)\}_{j \in \mathbb{Z}}$, cf. \eqref{eq:numSolOP}, which combined with Definition~\ref{def:EulSet}~\ref{def:EulSet2},   \eqref{eq:numLagrSystemIter}, \eqref{eq:recursiveU}, \eqref{eq:estUDx}, and induction implies
\begin{align}\label{eq:upper_boundU}
	\left|U(t, \xi) \right| &\leq \left|U(\tau_k^*, \xi) \right| + \frac14 \int_{\tau_k^*}^t V_\infty (s)ds\\ \nonumber
	 &\leq |U(\tau_k^*, \xi)| + \frac{1}{4}V_{\infty}(\tau_k^*)(t-\tau_k^*) \\ \nonumber
	 & \leq |U(0,\xi)|+ \frac14 V_\infty(0)t
	\\ \nonumber&\leq \|u\|_\infty +\left(1 + \sqrt{2} \right)\!\sqrt{G_\infty} \sqrt{\Delta x} + \frac{1}{4} G_{\infty}T. \qedhere
\end{align}
\end{proof}

Up to this point, we have shown for any $t \in [0, T]$, cf. \eqref{eq:triangle} and Lemma~\ref{lem:ConstantCs}, that
\begin{equation*}
	d(X(t), X_{\Dx}(t)) \leq C(t)e^{D(t)t}d(X, X_{\Dx}) + C_sd_s(\widehat{X}_{\Dx}(t), X_{\Dx}(t)),
\end{equation*}
with the first term tending to $0$ as $\Dx \rightarrow 0$. Hence it remains to prove 
\begin{equation}\label{claim:2}
	d_s(\widehat X_{\Dx}(t), X_{\Dx}(t))\to 0 \quad \text{ as } \Dx\to 0.
\end{equation}

To this end, note that for $t= \tau_k^*+h \in [\tau_k^*, \tau_{k+1}^*]$, we can apply the triangle inequality and write
 \begin{align}\nonumber
 	d_s(\widehat{X}_{\Dx}(t), X_{\Dx}(t)) &\leq d_s(S_h(\widehat{X}_{\Dx}(\tau_k^*)), S_h(X_{\Dx}(\tau_k^*))) 
	\\ & \qquad + d_s(S_h(X_{\Dx}(\tau_k^*)), S_{\Dx, h}(X_{\Dx}(\tau_k^*))).\label{eq:recursiveBeg}
\end{align}
Here the first term on the right-hand side measures how the error between $\hat X_{\Delta x}(\tau_k^*)$ and $X_{\Dx}(\tau_k^*)$ evolves with respect to time, while the second term measures the error introduced by terminating the iteration scheme after finitely many iterations and approximating the amount of energy to be removed.

\begin{remark}
	As observed in \cite[Sec. 3]{NewestLipschitzMetric}, $g(X, \hat{X})$ does not satisfy the triangle inequality in general, due to the minimum appearing in \eqref{eq:gg}. However, if we are given $X$, $\hat X$, and $\tilde X$ such that $\tau=\hat\tau=\tilde\tau$ and $\hat V_\xi(\xi)= \tilde V_{\xi}(\xi)$ for all $ \xi \in \Omega_{d}(X)$, then 
	\begin{equation*}
	g(X, \hat X)\leq g(X, \tilde X)+ g(\tilde X, \hat X),
	\end{equation*}
which implies \eqref{eq:recursiveBeg}.
\end{remark}

The two terms on the right-hand side of \eqref{eq:recursiveBeg} are analyzed separately, and we start by deriving an estimate for the first term which allows us to apply Gronwall's inequality.

\begin{lemma}\label{lem:GronwallIntegrated}
Let $X=L\circ P_{\Dx}((u,\mu,\nu))\in \F_0^{\alpha,0}$ for some $(u,\mu,\nu)\in \D_0^\alpha$ and recall that $\widehat X_{\Dx}(t)=S_t(X)$. Moreover, introduce $\tilde X_{\Dx}(t)= S_h(X_{\Dx}(\tau_k^*))$ for $t=\tau_k^*+h$, with $0\leq h\leq \Dt_k$ and $X_{\Dx}(\tau_k^*)=S_{\Dx,\tau_k^*}(X)$. Then 
\begin{equation*}
	d_s(\widehat X_{\Dx}(t), \tilde X_{\Dx}(t))\leq d_s(\widehat X_{\Dx}(\tau_k^*), X_{\Dx}	(\tau_k^*))+ \lambda \int_{\tau_k^*}^{t} d_s(\widehat X_{\Dx}(r), \tilde X_{\Dx}(r))dr,
\end{equation*}
where 
\begin{equation*}
	\lambda=1+\Big(1+\|\alpha'\|_\infty+ \frac12T\Big)\sqrt{G_\infty}+ \|\alpha'\|_\infty \Big(1+\frac12 T \Big)G_\infty.
\end{equation*}
\end{lemma}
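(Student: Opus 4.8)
The plan is to write out both $\widehat X_{\Dx}(t)$ and $\tilde X_{\Dx}(t)$ as solutions of \eqref{eq:intLagrSystem} on $[\tau_k^*,t]$ with (different) initial data $\widehat X_{\Dx}(\tau_k^*)$ and $X_{\Dx}(\tau_k^*)$ at $t=\tau_k^*$, observe that they share the same wave-breaking function (since $\widehat X_{\Dx}$ has the same initial data as $X_{\Dx}$, so $\widehat\tau_{\Dx}=\tau_{\Dx}$ by Remark~\ref{rem:Invariance}), and then estimate each of the five terms in $d_s$ from Definition via \eqref{eq:ds} by integrating the ODEs. Concretely, for each term I would use the integral form: $y_t=U$ gives $\|y-\hat y\|_\infty(t)\leq \|y-\hat y\|_\infty(\tau_k^*)+\int_{\tau_k^*}^t\|U-\hat U\|_\infty(r)\,dr$; $U_t=\tfrac12 V-\tfrac14 V_\infty$ gives a bound on $\|U-\hat U\|_\infty(t)$ in terms of $\int \|g(\cdot,\cdot)\|_\infty$ and hence $\int \|g\|_2$ (using that the relevant integrands are supported where the sequences are nontrivial and the total energy is bounded by $G_\infty$); the differentiated equations $y_{t\xi}=U_\xi$, $U_{t\xi}=\tfrac12 V_\xi$ handle $\|y_\xi-\hat y_\xi\|_2$ and $\|U_\xi-\hat U_\xi\|_2$; and the term $\|g(\widehat X_{\Dx}(t),\tilde X_{\Dx}(t))\|_2$ is handled by differentiating \eqref{eq:gg} in time piecewise (on $\Omega_c$ and on $\Omega_d$), using $V_t=0$ between breaking times together with the structure of the minimum.

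**Key steps in order.** First, I would fix $k$ and $t=\tau_k^*+h$ and record that on $[\tau_k^*,t]$ both flows evolve by the genuine system $S_\cdot$, so no iteration error enters here — the lemma is purely about how $d_s$ propagates under the exact flow. Second, I would bound $\|y-\hat y\|_\infty$ and $\|y_\xi-\hat y_\xi\|_2$ by Duhamel against $\|U-\hat U\|_\infty$ and $\|U_\xi-\hat U_\xi\|_2$, contributing the leading "$1$" in $\lambda$. Third, for $\|U-\hat U\|_\infty$ I would write $|U-\hat U|(t,\xi)\leq |U-\hat U|(\tau_k^*,\xi)+\tfrac14\int_{\tau_k^*}^t |V_\infty-\hat V_\infty|(r)+\tfrac12\int_{\tau_k^*}^t|V-\hat V|(t,\xi)\,dr$ and then control $|V-\hat V|$ and $|V_\infty-\hat V_\infty|$ pointwise by $\int_{-\infty}^\xi |V_\xi-\hat V_\xi|\,d\eta$, hence by $\sqrt{G_\infty}\,\|V_\xi-\hat V_\xi\|_2$ via Cauchy–Schwarz and $0\le V_\xi,\hat V_\xi\le 1$ (giving the $\sqrt{G_\infty}$-type terms, with the $\tfrac12T$ coming from the two nested time integrations up to length $T$). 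Similarly $\|U_\xi-\hat U_\xi\|_2(t)\leq \|U_\xi-\hat U_\xi\|_2(\tau_k^*)+\tfrac12\int \|V_\xi-\hat V_\xi\|_2(r)\,dr$, and since $\|V_\xi-\hat V_\xi\|_2\le \|g(\widehat X_{\Dx},\tilde X_{\Dx})\|_2$ by \eqref{eq:gg}, this folds into $d_s$. Fourth — the term needing the $\|\alpha'\|_\infty$ factors — I would differentiate $g$ in time: on $\Omega_c$, $V_\xi$ is constant in time away from breaking and the breaking jumps at $\tau_{j+1/2}$ match on both flows (same $\tau$), while on $\Omega_d$ the extra piece $\|\alpha'\|_\infty\min(V_\xi,\hat V_\xi)(|y-\hat y|+|U-\hat U|)$ has time derivative controlled, after the breaking time, by $\|\alpha'\|_\infty V_\xi$ times $\partial_t(|y-\hat y|+|U-\hat U|)\leq |U-\hat U|+\tfrac12|V-\hat V|+\tfrac14|V_\infty-\hat V_\infty|$, plus at the breaking instant the jump in $g$ is bounded using $|\alpha(\hat y)-\alpha(y)|V_\xi\leq\|\alpha'\|_\infty V_\xi|\hat y-y|$, which is already of the form tracked by $g$ on $\Omega_d$. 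Integrating the $L^2$ norm of this derivative and using $\|\alpha'\|_\infty V_\xi\le\|\alpha'\|_\infty$ pointwise with $\int V_\xi\le G_\infty$ produces the $\|\alpha'\|_\infty\sqrt{G_\infty}$ and $\|\alpha'\|_\infty(1+\tfrac12T)G_\infty$ contributions. Finally, I would sum the five estimates, collect constants into $\lambda$, and read off the stated integral inequality.

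**Main obstacle.** The delicate point is the $\|g(\widehat X_{\Dx}(t),\tilde X_{\Dx}(t))\|_2$ term: because of the minimum in \eqref{eq:gg} one cannot simply differentiate termwise, and one must be careful that the two flows share breaking times so that the singular jumps in $V_\xi$ occur simultaneously (this is exactly why the hypothesis $X\in\F_0^{\alpha,0}$ and $\widehat X_{\Dx}(\tau_k^*),X_{\Dx}(\tau_k^*)$ have the common $\tau_{\Dx}$ is invoked, cf. Remark~\ref{rem:Invariance}). Handling the instant of breaking — where $V_\xi$ drops and the $\min$ switches which argument it selects — requires showing the induced jump in $g$ is dominated by the quantity $g$ already measures just before breaking, so that after integrating one still obtains a clean Gronwall-type bound without extra boundary terms. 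The rest is a bookkeeping exercise in Duhamel estimates of the kind already carried out in \cite{AlphaHS} and \cite{NewestLipschitzMetric}, using only $0\le y_\xi,H_\xi,V_\xi\le$ the appropriate bounds, $V_\infty$ nonincreasing and $\le G_\infty$, and $h\le\Dt_k\le T$.
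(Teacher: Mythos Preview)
Your overall architecture is correct and matches the paper's: both flows solve \eqref{eq:intLagrSystem} on $[\tau_k^*,t]$, they share the wave-breaking function $\tau_{\Dx}$, and one estimates each of the five ingredients of $d_s$ by Duhamel. However, there is a genuine gap in your third step, and it propagates into the constant $\lambda$.

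You propose to bound $\|V_\xi-\tilde V_\xi\|_1$ by $\sqrt{G_\infty}\,\|V_\xi-\tilde V_\xi\|_2$ ``via Cauchy--Schwarz and $0\le V_\xi,\hat V_\xi\le 1$''. The pointwise bound $0\le V_\xi,\tilde V_\xi\le 1$ alone does \emph{not} give a measure bound on the support of $V_\xi-\tilde V_\xi$, so Cauchy--Schwarz cannot be applied this way. What the paper actually uses is the observation that $\widehat V_\xi(s,\xi)-\tilde V_\xi(s,\xi)=0$ for all $\xi\in\Omega_c(0)\cup\Omega_d(t)$ (both flows leave $V_\xi$ untouched there), so the difference is supported on $\Omega_d(0)\cap\Omega_c(t)$, whose measure is bounded by $(1+\tfrac14 t^2)G_\infty\le(1+\tfrac12 t)^2G_\infty$ via \cite[Corr.~2.4]{AlphaHS}. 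Cauchy--Schwarz on this set then gives
\[
\|\widehat V_\xi(s)-\tilde V_\xi(s)\|_1\le\Big(1+\tfrac12 T\Big)\sqrt{G_\infty}\,\|g(\widehat X(s),\tilde X(s))\|_2,
\]
and this is precisely where the factor $(1+\tfrac12 T)\sqrt{G_\infty}$ in $\lambda$ originates --- not from ``two nested time integrations'' as you suggest (there is only one time integral in the $\|U-\tilde U\|_\infty$ estimate). Without this measure bound your argument does not close, and your attribution of the $\tfrac12 T$ is incorrect.

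Your treatment of $\|g(\widehat X(t),\tilde X(t))\|_2$ via ``differentiating in time and tracking the jump at breaking'' is morally right but vaguer than necessary. The paper instead splits pointwise into the three regions $\Omega_c(\tau_k^*)$, $\Omega_d(\tau_k^*)\cap\Omega_c(t)$, and $\Omega_d(\tau_k^*)\cap\Omega_d(t)$ and bounds $g$ directly in each case; the decrease across breaking times is quoted from \cite[Prop.~3.1]{NewestLipschitzMetric}. In the middle region one again needs the $L^1\to L^2$ conversion above, so the same measure bound reappears and produces the $\|\alpha'\|_\infty(1+\tfrac12 T)G_\infty$ term in $\lambda$.
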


\begin{proof}
To ease the readability we drop the subindex $\Dx$. 

Since $\widehat X(0)=X=X(0)$, it follows that $\hat \tau= \tilde \tau=\tau$, which in turn implies $\Omega_j(\widehat X(t))=\Omega_j(\tilde X(t))$ for $j\in \{c,d\}$ and $t\in [\tau_k^*, \tau_{k+1}^*]$. As a consequence, $g(\widehat X(t), \tilde X(t))$ is well-defined and 
\begin{equation}\label{rel:Vxig}
\|\widehat{V}_{\xi}(t) - \tilde{V}_{\xi}(t)\|_2 \leq \|g(\widehat{X}(t), \tilde{X}(t))\|_2. 
\end{equation}
It therefore follows from \eqref{eq:intLagrSystem} that 
\begin{align}\nonumber
	 \| \widehat y(t)-\tilde y(t&)\|_\infty + \| \widehat y_\xi(t)-\tilde y_{\xi}(t)\|_2+ \|\widehat U_\xi(t)-	\tilde U_{ \xi}(t)\|_2\\ \nonumber
	& \leq \| \widehat y(\tau_k^*)-\tilde y(\tau_k^*)\|_\infty+ \| \widehat y_\xi(\tau_k^*)-\tilde 	y_{\xi}(\tau_k^*)\|_2+ \|\widehat U_\xi(\tau_k^*)-\tilde U_{ \xi}(\tau_k^*)\|_2\\ \nonumber 
	& \quad + \int_{\tau_k^*}^t \! \!\left( \|\widehat U(s)-\tilde U(s)\|_\infty + \|\widehat U_\xi(s)-\tilde U_{\xi}(s)\|_2+ \frac12 \|g(\widehat X(s), \tilde X(s))\|_2 \! \right)\!ds\\ \nonumber
	& = \| \widehat y(\tau_k^*)- y (\tau_k^*)\|_\infty+ \| \widehat y_\xi(\tau_k^*)-y_{\xi}(\tau_k^*)\|_2+ \|\widehat U_\xi(\tau_k^*)-U_{ \xi}(\tau_k^*)\|_2\\ \label{Gron1}
	& \quad  + \int_{\tau_k^*}^t \! \!\left( \|\widehat U(s)-\tilde U(s)\|_\infty + \|\widehat U_\xi(s)-\tilde U_{\xi}(s)\|_2+ \frac12 \|g(\widehat X(s), \tilde X(s))\|_2 \! \right)\!ds. \raisetag{-20pt}
\end{align}
Furthermore, \eqref{eq:int_ODE2} implies 
\begin{align}\label{eq:bound_Udiff}
	\|\widehat U(t)-\tilde U(t)\|_\infty &\leq \|\widehat U(\tau_k^*)-\tilde U(\tau_k^*)\|_\infty+\frac14  	\int_{\tau_k^*}^t \|\widehat V_\xi(s)-\tilde V_\xi(s)\|_1 ds \nonumber \\
	& = \|\widehat U(\tau_k^*)-U(\tau_k^*)\|_\infty +\frac14  \int_{\tau_k^*}^t \|\widehat V_\xi(s)-\tilde V_\xi(s)\|_1 ds.
\end{align}
Since $\widehat V_\xi(s, \xi)-\tilde V_\xi(s, \xi)=0$ for all $\xi \in \Omega_c(0)\cup\Omega_d(t)$ and, by \cite[Corr. 2.4]{AlphaHS}, 
\begin{equation}\label{eq:boundMeasure}
	\mathrm{meas}( \Omega_d(0) \cap \Omega_c(t))\leq \! \left(1+ \frac{1}{4}t^2\right) G_\infty\leq \Big(1+\frac12 t\Big)^{\! 2} G_\infty,
\end{equation}
we have, by the Cauchy-Schwarz inequality and \eqref{rel:Vxig},
\begin{equation}\label{L1V}
	\|\widehat V_\xi(s)-\tilde V_\xi(s)\|_1\leq \left(1+ \frac12T\right)\! \sqrt{G_\infty}\|g(\widehat X(s), \tilde X(s))\|_2,
\end{equation}
and hence  
\begin{align}\nonumber
	\|\widehat U(t)-\tilde U(t)\|_\infty 
& \leq  \|\widehat U(\tau_k^*)-U(\tau_k^*)\|_\infty \\
&  \qquad+ \frac14 \left(1+ \frac12T\right)\! \sqrt{G_\infty}\int_{\tau_k^*}^t \|g(\widehat X(s), \tilde X(s))\|_2 ds.  \label{Gron2}
\end{align}

Finally we can turn our attention towards $g(\widehat X(t), \tilde X(t))$, which is decreasing across wave breaking times, see \cite[Prop. 3.1]{NewestLipschitzMetric}. Let $\Omega_j(t) = \Omega_j(\tilde{X}(t))$ for $j \in \{c, d\}$. There are three scenarios to consider: If $\xi \in \Omega_c(\tau_k^*)$, then 
\begin{equation*}
g(\widehat X(t), \tilde X(t))(\xi) = g(\widehat X(\tau_k^*), \tilde X(\tau_k^*))(\xi),
\end{equation*}
 while for $\xi \in \Omega_d(\tau_k^*) \cap \Omega_c(t)$ we have, by \eqref{eq:intLagrSystem}, $\widehat{V}_{\xi}(\tau_k^*) = \tilde{V}_{\xi}(\tau_k^*)$, and by \eqref{eq:gg}, 
 \begin{align*}
	g(\widehat{X}(t), \tilde{X}(t))(\xi) &= |\alpha\left(\widehat{y}(\tau(\xi), \xi)\right)\widehat{V}_{\xi}(\tau_k^*, \xi)  - \alpha(\tilde{y}(\tau(\xi), \xi))\tilde{V}_{\xi}(\tau_k^*, \xi)|
	\\ & \leq \|\alpha'\|_{\infty} \tilde{V}_{\xi}(\tau_k^*, \xi)\!\left( \!|\widehat{y}(\tau_k^*, \xi) - \tilde{y}(\tau_k^*, \xi)| + \int_{\tau_k^*}^t |\widehat{U}(s, \xi) - \tilde{U}(s, \xi)|ds \! \right)
	\\ &\leq g(\widehat{X}(\tau_k^*), \tilde{X}(\tau_k^*))(\xi) + \|\alpha'\|_{\infty} \tilde{V}_{\xi}(\tau_k^*, \xi)\int_{\tau_k^*}^t \|\widehat{U}(s) - \tilde{U}(s)\|_{\infty}ds. 
\end{align*}
Using \eqref{eq:bound_Udiff} and \eqref{L1V} leads to the following estimate when $\xi \in \Omega_d(\tau_k^*) \cap \Omega_d(t)$
\begin{align*}
	g(\widehat{X}(t), \tilde{X}(t))(\xi) &\leq g(\widehat{X}(\tau_k^*), \tilde{X}(\tau_k^*))(\xi) 
	\\ & \quad + \|\alpha'\|_{\infty} \tilde{V}_{\xi}(\tau_k^*, \xi) \!\int_{\tau_k^*}^t \! \!\left(\|\widehat{U}(s) - \tilde{U}(s)\|_{\infty} + \frac{1}{4}\|\widehat{V}_{\xi}(s) - \tilde{V}_{\xi}(s)\|_1\! \right)\!ds\\
	& \leq g(\widehat{X}(\tau_k^*), \tilde{X}(\tau_k^*))(\xi)+  \|\alpha'\|_{\infty} \tilde{V}_{\xi}(\tau_k^*, \xi)\int_{\tau_k^*}^t \!\|\widehat{U}(s) - \tilde{U}(s)\|_{\infty}ds  \\
	& \quad +\frac{1}{4}\left(1+\frac12 T\right)\|\alpha'\|_{\infty}\sqrt{G_\infty} \tilde{V}_{\xi}(\tau_k^*, \xi)\int_{\tau_k^*}^t \|g(\hat X(s), \tilde X(s)\|_2 ds.
\end{align*}
As a consequence, \vspace{-0.2cm}
\begin{align}\nonumber
	\|g(\widehat X(t), \tilde X(t))\|_2& \leq \|g(\widehat X(\tau_k^*), X(\tau_k^*))\|_2 + \|\alpha'\|_\infty \sqrt{G_\infty} \int_{\tau_k^*}^t \|\widehat U(s)-\tilde U(s)\|_\infty ds\\ \label{Gron3}
	& \qquad + \frac14\left(1+\frac12 T\right)\|\alpha ' \|_\infty G_\infty\int_{\tau_k^*}^t \|g(\widehat X(s),\tilde X(s))\|_2 ds,
\end{align}
since $\max (\widehat V_\xi(s, \xi), \tilde V_\xi(s, \xi))\leq V_\xi(\xi)\leq 1$ for all $\xi \in \mathbb{R}$.
Combining \eqref{Gron1}, \eqref{Gron2}, and \eqref{Gron3} finishes the proof. 
\end{proof}

After applying Gronwall's inequality to Lemma~\ref{lem:GronwallIntegrated}, we end up with the following estimate.

\begin{corollary}\label{cor:GronwallIntegrated}
Let $X=L\circ P_{\Dx}((u,\mu,\nu))\in \F_0^{\alpha,0}$ for some $(u,\mu,\nu)\in \D_0^\alpha$ and recall that $\widehat X_{\Dx}(t)=S_t(X)$. Moreover, introduce $\tilde X_{\Dx}(t)= S_h(X_{\Dx}(\tau_k^*))$ for $t=\tau_k^*+h$, with $0\leq h\leq \Dt_k$ and $X_{\Dx}(\tau_k^*)=S_{\Dx,\tau_k^*}(X)$. Then 
\begin{equation*}
	d_s(\widehat X_{\Dx}(\tau_k^*+h), \tilde X_{\Dx}(\tau_k^*+h))\leq e^{\lambda h} d_s(\widehat 	X_{\Dx}(\tau_k^*), X_{\Dx}(\tau_k^*)),
\end{equation*}
where 
\begin{equation*}
\lambda=1+\Big(1+\|\alpha'\|_\infty+\frac12 T \Big)\sqrt{G_\infty}+ \|\alpha'\|_\infty \Big(1+\frac12 T \Big)G_\infty.
\end{equation*}
\end{corollary}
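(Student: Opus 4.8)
The plan is to apply the integral form of Gronwall's inequality directly to the estimate established in Lemma~\ref{lem:GronwallIntegrated}. Concretely, fix $k$ and define, for $h \in [0, \Dt_k]$, the nonnegative function $\varphi(h) := d_s(\widehat X_{\Dx}(\tau_k^*+h), \tilde X_{\Dx}(\tau_k^*+h))$, which is measurable (indeed continuous) in $h$ since both $S_t$ and the components entering $d_s$ depend continuously on time on $[\tau_k^*, \tau_{k+1}^*]$. Lemma~\ref{lem:GronwallIntegrated}, after the change of variables $r = \tau_k^* + \sigma$, then reads
\begin{equation*}
	\varphi(h) \leq \varphi(0) + \lambda \int_0^h \varphi(\sigma)\, d\sigma, \qquad 0 \leq h \leq \Dt_k,
\end{equation*}
with $\varphi(0) = d_s(\widehat X_{\Dx}(\tau_k^*), X_{\Dx}(\tau_k^*))$, using that $\tilde X_{\Dx}(\tau_k^*) = S_0(X_{\Dx}(\tau_k^*)) = X_{\Dx}(\tau_k^*)$.

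Next I would invoke the standard Gronwall lemma (integral form): if $\varphi \geq 0$ is, say, continuous on $[0,\Dt_k]$ and satisfies $\varphi(h) \leq C + \lambda\int_0^h \varphi(\sigma)\,d\sigma$ for a constant $C \geq 0$ and $\lambda \geq 0$, then $\varphi(h) \leq C e^{\lambda h}$ for all $h \in [0,\Dt_k]$. Applying this with $C = \varphi(0)$ yields exactly
\begin{equation*}
	d_s(\widehat X_{\Dx}(\tau_k^*+h), \tilde X_{\Dx}(\tau_k^*+h)) \leq e^{\lambda h}\, d_s(\widehat X_{\Dx}(\tau_k^*), X_{\Dx}(\tau_k^*)),
\end{equation*}
which is the asserted bound, and the constant $\lambda$ is carried over verbatim from Lemma~\ref{lem:GronwallIntegrated}, so nothing further needs to be checked there.

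Since this corollary is a direct corollary of the preceding lemma combined with a textbook inequality, there is no genuine obstacle; the only points requiring a word of care are (i) noting that $\varphi(0)$ has the stated form because $\tilde X_{\Dx}(\tau_k^*) = X_{\Dx}(\tau_k^*)$, and (ii) confirming the regularity of $h \mapsto \varphi(h)$ needed to legitimately apply Gronwall — continuity suffices and follows from the time-continuity of $y$, $U$, $y_\xi$, $U_\xi$ and of $g(\widehat X(\cdot), \tilde X(\cdot))$ on each interval $[\tau_k^*, \tau_{k+1}^*]$, the latter because across a breaking time the quantity $g$ only decreases (as recorded in the proof of Lemma~\ref{lem:GronwallIntegrated}) while away from breaking times all ingredients are smooth in $t$. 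I would state the proof in two or three sentences, citing Lemma~\ref{lem:GronwallIntegrated} and Gronwall's inequality, and would not belabor the elementary measurability issue.
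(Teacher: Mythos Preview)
Your proposal is correct and matches the paper's approach exactly: the paper simply states that the corollary follows by applying Gronwall's inequality to Lemma~\ref{lem:GronwallIntegrated}. One small remark: your claim that $h\mapsto\varphi(h)$ is continuous is not quite justified by the argument you give (the fact that $g$ \emph{decreases} across breaking times does not by itself preclude a jump), but this is irrelevant since the integral form of Gronwall's lemma only requires $\varphi$ to be nonnegative, measurable, and bounded, all of which are immediate here.
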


Looking back at \eqref{eq:recursiveBeg} and using the notation from Corollary~\ref{cor:GronwallIntegrated}, we have, for any $t \in [\tau_k^*, \tau_{k+1}^*]$, 
\begin{align*}
	d_s(\widehat X_{\Dx}(t),X_{\Dx}(t))& \leq d_s(\widehat X_{\Dx}(t), \tilde X_{\Dx}(t)) + d_s(\tilde X_{\Dx}(t), X_{\Dx}(t))\\
	& \quad \leq e^{\lambda (t-\tau_k^*)} d_s(\widehat X_{\Dx}(\tau_k^*), X_{\Dx}(\tau_k^*))+  d_s(\tilde X_{\Dx}(t), X_{\Dx}(t)),
\end{align*}
and hence by induction, since $\widehat X_{\Dx}(0) = X_{\Dx}(0)$, 
\begin{align}\label{eq:inductionArg}
d_s(\widehat X_{\Dx}(t)&, X_{\Dx}(t)) \nonumber
\\ & \leq e^{\lambda t} d_s(\widehat X_{\Dx}(0), X_{\Dx}(0)) + d_s(S_h(X_{\Dx}(\tau_k^*)), X_{\Dx}(t)) \nonumber\\
	& \qquad +\sum_{l=0}^{k-1} e^{\lambda (t-\tau_{l+1}^*)}d_s(S_{\Dt_l}(X_{\Dx}(\tau_{l}^*)), X_{\Dx}(\tau_{l+1}^*)) \nonumber\\
	& \leq e^{\lambda T}\!\left(\!d_s\big(S_h(X_{\Dx}(\tau_k^*)\big), X_{\Dx}(t))+\sum_{l=0}^{k-1} d_s\big(S_{\Dt_l}(X_{\Dx}(\tau_{l}^*)), X_{\Dx}(\tau_{l+1}^*)\big)\!\right)\!.
\end{align}
Thus, in view of \eqref{claim:2}, it remains to show that the above sum tends to $0$ as $\Dx\to 0$, which amounts to analyzing the aforementioned local errors introduced by the numerical solution operator $S_{\Dx, t}$. Since all the terms in the above sum have the same structure, we focus on estimating the first one.  

\begin{lemma}\label{lem:locerror}
Let $X=L\circ P_{\Dx}((u,\mu,\nu))\in \F_0^{\alpha,0}$ for some $(u,\mu,\nu)\in \D_0^\alpha$ and $\Delta x\leq 1$. Moreover, recall that $X_{\Dx}(t)=S_{\Dx,t}(X)$ and introduce $\tilde X_{\Dx}(t)= S_h(X_{\Dx}(\tau_k^*))$ for $t= \tau_k^*+h$ with $0\leq h\leq \Dt_k$. If $\Dt_k>\Dt$, then 
\begin{equation}\label{est:locerror1}
	d_s(\tilde X_{\Dx}(t), X_{\Dx}(t))=0,
\end{equation}
and if $\Dt_k\leq \Dt$, then 
\begin{align}\nonumber
	d_s(\tilde X_{\Dx}(t), X_{\Dx}(t))&\leq (1+\tilde C_s)(1+T)^2\big(1+\|\alpha'\|_{\infty}\sqrt{G_{\infty}}\big)G_\infty \Dt \Dt_k
	\\ \label{est:locerror2}& \qquad \qquad +2 \tilde C_s (1+T)^2\sqrt{\mathrm{meas}(B_k)}\Dt_k, 
\end{align}
where 
\begin{equation}\label{eq:tildeCs}
	\tilde C_s= \|\alpha'\|_\infty \Big(\|u\|_\infty +(1+\sqrt{2})\sqrt{G_\infty}+\frac14 G_\infty T\Big),
\end{equation}
and 
\begin{equation*}
	B_k=\{\xi \in \mathbb{R}: \tau_k^*<\tau(\xi)\leq\tau_{k+1}^*\}.
\end{equation*}
\end{lemma}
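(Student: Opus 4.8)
The plan is to treat the two regimes of \eqref{est:locerror1} and \eqref{est:locerror2} separately, the first being essentially trivial. If $\Dt_k>\Dt$, then inspecting Algorithm~\ref{alg:pseudocode} shows this can only occur through the \textbf{else}-branch (so $\tau_{k+1}^*=\hat\tau_{m+1}$) or through the line setting $\tau_{k+1}^*=T$ when $\hat\tau_{m+1}\geq T$; in either case no wave breaking occurs strictly inside $(\tau_k^*,\tau_{k+1}^*)$, whence Remark~\ref{rem:evolveExact} applies: the iteration scheme is exact and dissipates exactly the amount of energy prescribed by \eqref{eq:energy_den} at $\tau_{k+1}^*$, so $S_{\Dx,h}(X_{\Dx}(\tau_k^*))=S_h(X_{\Dx}(\tau_k^*))$ on $[\tau_k^*,\tau_{k+1}^*]$ and \eqref{est:locerror1} follows. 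For the same reason, if $\Dt_k\leq\Dt$ but still no wave breaking occurs in $(\tau_k^*,\tau_{k+1}^*)$ (i.e.\ $M_{\mathrm{it}}^k\leq 2$), then $d_s(\tilde X_{\Dx}(t),X_{\Dx}(t))=0$ and \eqref{est:locerror2} holds trivially. Hence only the case $\Dt_k\leq\Dt$ with wave breaking strictly inside $(\tau_k^*,\tau_{k+1}^*)$ remains.

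In that case both $\tilde X_{\Dx}(t)$ and $X_{\Dx}(t)=X_{\Dx}^{M_{\mathrm{it}}^k}(t)$ start from $X_{\Dx}(\tau_k^*)$ and, by \eqref{eq:energy_den} and \eqref{eq:numExactIteration}, they dissipate energy at exactly the same breaking times $\tau_{j+\frac12}\in(\tau_k^*,\tau_{k+1}^*]$; the only discrepancy is in the dissipated fractions, which are $\alpha(\tilde y(\tau_{j+\frac12},\xi))$ for the exact solution and $\alpha(y_j^{M_{\mathrm{it}}^k-1}(\tau_{k+1}^*))$ for the numerical one. Since $\tilde y_\xi(\tau_{j+\frac12},\xi)=0$ for $\xi$ in the cell with breaking time $\tau_{j+\frac12}$ — which follows from \eqref{eq:waveBreakingFunc} and Definition~\ref{def:LagSet}~\ref{def:importantRelation} exactly as in the derivation of \eqref{eq:num_timecont} — the function $\tilde y(\tau_{j+\frac12},\cdot)$ is constant on that cell and equals the nodal value $\tilde y_j(\tau_{j+\frac12})$, so the fraction discrepancy is bounded by
\begin{equation*}
\|\alpha'\|_\infty\Bigl( |\tilde y_j(\tau_{j+\frac12})-\tilde y_j(\tau_{k+1}^*)| + |\tilde y_j(\tau_{k+1}^*)-y_j^{M_{\mathrm{it}}^k}(\tau_{k+1}^*)| + |y_j^{M_{\mathrm{it}}^k}(\tau_{k+1}^*)-y_j^{M_{\mathrm{it}}^k-1}(\tau_{k+1}^*)| \Bigr).
\end{equation*}
The first term is the genuine minimal-time-step error: since $\tau_{j+\frac12},\tau_{k+1}^*\in[\tau_k^*,\tau_{k+1}^*]$, it is at most $\|\alpha'\|_\infty\|\tilde U\|_\infty\Dt_k$, and $\|\tilde U\|_\infty$ is bounded as in \eqref{eq:upper_bound_deriv} and \eqref{eq:upper_boundU} via \eqref{eq:estUDx} and $\Dx\leq 1$, giving a contribution of size $\lesssim\tilde C_s\Dt_k$. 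The third term is the iteration-truncation error; rerunning the estimates in the proof of Proposition~\ref{prop:termination} on the shorter interval $[\tau_k^*,\tau_k^*+h]$, together with \eqref{eq:dt}, bounds it by $\tfrac18 G_\infty\Dt_k^2\leq\tfrac18 G_\infty\Dt\Dt_k$. The middle term is self-referential and is to be absorbed in a Gronwall loop.

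To close the argument, I would mimic the proof of Lemma~\ref{lem:GronwallIntegrated}: integrating \eqref{eq:intLagrSystem}, using that the energy-density difference vanishes off $\Omega_d(\tau_k^*)\cap\Omega_c(t)\subseteq B_k$ together with the measure bound \eqref{eq:boundMeasure} and $V_\xi\leq 1$, and noting that the middle term above is $\leq\sup_{s\in[\tau_k^*,t]}d_s(\tilde X_{\Dx}(s),X_{\Dx}(s))$, one obtains an inequality $d_s(\tilde X_{\Dx}(t),X_{\Dx}(t))\leq F_k+\lambda\int_{\tau_k^*}^t d_s(\tilde X_{\Dx}(r),X_{\Dx}(r))\,dr$ on $[\tau_k^*,\tau_k^*+h]$, where $F_k$ collects the first and third terms: their contribution to the $\|\cdot\|_\infty$ and $V_\infty$-driven parts is $\lesssim(\tilde C_s\Dt_k+\tfrac18\|\alpha'\|_\infty G_\infty\Dt\Dt_k)G_\infty$, and their contribution to the localized $\|g\|_2$-part is, by Cauchy--Schwarz over $B_k$, $\lesssim(\tilde C_s\Dt_k+\tfrac18 G_\infty\Dt\Dt_k)\sqrt{\mathrm{meas}(B_k)}$. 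Gronwall's inequality over the short interval then amplifies $F_k$, and---keeping the bound $\mathrm{meas}(\Omega_d(0)\cap\Omega_c(t))\leq(1+\tfrac12 t)^2 G_\infty$ and the $\|\tilde U\|_\infty$-bound---one collects the stated factors $(1+\tilde C_s)$, $(1+T)^2$, $(1+\|\alpha'\|_\infty\sqrt{G_\infty})$ and the crude factor $2$, giving \eqref{est:locerror2}. The main obstacle is precisely the self-referential appearance of $\tilde y_j(\tau_{k+1}^*)$: it must be peeled off from the cheaply estimable $|\tilde y_j(\tau_{j+\frac12})-\tilde y_j(\tau_{k+1}^*)|$ so that what is left can be absorbed into the Gronwall term, and this has to be done while respecting that $g$ fails the triangle inequality (cf.\ the remark following \eqref{eq:recursiveBeg}), which forces careful tracking of the sets $\Omega_c$, $\Omega_d$ and $B_k$ across the breaking times.
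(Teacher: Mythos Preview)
Your treatment of $\Dt_k>\Dt$ and of $\Dt_k\leq\Dt$ with no breaking inside $(\tau_k^*,\tau_{k+1}^*)$ matches the paper.

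In the main case you diverge from the paper. You split $|\tilde y_j(\tau_{j+\frac12})-y_j^{M_{\mathrm{it}}^k-1}(\tau_{k+1}^*)|$ into three pieces, the middle one $|\tilde y_j(\tau_{k+1}^*)-y_j^{M_{\mathrm{it}}^k}(\tau_{k+1}^*)|$ being self-referential, and plan to close by Gronwall. The paper bypasses all of this with a crude direct bound: since $\tilde y_j(\tau_k^*)=y_j^{M_{\mathrm{it}}^k-1}(\tau_k^*)$ and both $\tilde U_j$ and $U_j^{M_{\mathrm{it}}^k-1}$ satisfy the uniform bound \eqref{eq:upper_boundU},
\[
\bigl|\tilde y_j(\tau_{j+\frac12})-y_j^{M_{\mathrm{it}}^k-1}(\tau_{k+1}^*)\bigr|
=\Bigl|\int_{\tau_k^*}^{\tau_{j+\frac12}}\!\tilde U_j\,ds-\int_{\tau_k^*}^{\tau_{k+1}^*}\!U_j^{M_{\mathrm{it}}^k-1}\,ds\Bigr|
\leq 2\Bigl(\|u\|_\infty+(1+\sqrt{2})\sqrt{G_\infty}+\tfrac14 G_\infty T\Bigr)\Dt_k,
\]
with no feedback loop. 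From this the paper bounds $\|\tilde y-y\|_\infty$, $\|\tilde U-U\|_\infty$, $\|g(\tilde X,X)\|_2$, $\|\tilde U_\xi-U_\xi\|_2$ and $\|\tilde y_\xi-y_\xi\|_2$ one after another by straightforward integration, and the stated constants in \eqref{est:locerror2} fall out directly.

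Your Gronwall step as written also has a genuine gap: the middle term lives at the \emph{fixed future} time $\tau_{k+1}^*$, so for $t<\tau_{k+1}^*$ it is \emph{not} dominated by $\sup_{s\in[\tau_k^*,t]}d_s(\tilde X_{\Dx}(s),X_{\Dx}(s))$, and the integral inequality you write does not hold. What would work instead is a sup-bootstrap: bound everything by $\sup_{s\in[\tau_k^*,\tau_{k+1}^*]}d_s$, observe the coefficient in front of it is $O(\|\alpha'\|_\infty G_\infty\Dt_k)$ and hence $<1$ for small $\Dt$, and solve. This is fixable but introduces an amplification factor $1/(1-O(\Dt_k))$ that would not obviously reproduce the precise constants in \eqref{est:locerror2}. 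The paper's direct bound is both simpler and what actually delivers the stated estimate.
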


\begin{proof}
To simplify the notation we omit the subindex $\Dx$. 

If $\Dt_k>\Dt$, then Remark~\ref{rem:evolveExact} implies that $\tilde X(t)=X(t)$ for all $t\in [\tau_k^*, \tau_{k+1}^*]$, from which \eqref{est:locerror1} follows. 

For $\Dt_k\leq \Dt$,  we distinguish between two cases: 

{\it Case $1$:} If no wave breaking occurs in $(\tau_k^*, \tau_{k+1}^*)$, then it follows, as in the case where $\Dt_k > \Dt$, that $\tilde X(t)=X(t)$ for all $t\in [\tau_k^*, \tau_{k+1}^*]$,  hence $d_s(\tilde X(t), X(t))=0$. 

{\it Case  $2$:} If $\tau_k^*=\hat\tau_m$ and $\tau_{k+1}^*= \hat\tau_{m+l}$ for $m\in \mathbb{N}_0$ and $l\geq 2$, then 
\begin{equation}\label{cancel}
	\tilde X_{\xi}(t,\xi)= X_{\xi}(t,\xi) \quad \text{ for all } (t,\xi)\in [\tau_k^*, \tau_{k+1}^*]\times B_k^c.
\end{equation}
Furthermore, $\tilde X(t)$ and $X(t)$ are piecewise linear and continuous with nodes located at $\{\xi_j\}_{j\in \mathbb{Z}}$ for any $t\in [\tau_k^*, \tau_{k+1}^*]$ and we can therefore, recalling \eqref{iterseq}, associate to $\tilde X(t)$ and $X(t)$ the sequences $\{\tilde X_j(t)\}_{j\in\mathbb{Z}}= \{\tilde X(t,\xi_j)\}_{j\in \mathbb{Z}}$ and $\{X_j(t)\}_{j \in \mathbb{Z}} = \{ X_j^{M_{\mathrm{it}}^k}(t)\}_{j\in\mathbb{Z}}= \{ X^{M_{\mathrm{it}}^k}(t,\xi_j)\}_{j\in \mathbb{Z}}$,
respectively. Here $M_{\mathrm{it}}^k$ is chosen such that
\begin{equation}\label{eq:stopCond}
	\sup_{t \in [\tau_k^*, \tau_{k+1}^*]} \| \{y_j^{M_{\mathrm{it}}^k}(t)\} - \{y_j^{M_{\mathrm{it}}^k-1}(t)\} \|_{\ell^{\infty}} \leq \frac{1}{8}G_{\infty} \Dt^2 \Dx,
\end{equation}
and satisfies, see the discussion proceeding Proposition~\ref{prop:termination}, $2 \leq M_{\mathrm{it}}^k \leq 3$. Moreover, let $\{X_j^{M_{\mathrm{it}}^k-1}(t)\}_{j \in \mathbb{Z}}\!=\!\{X^{M_\mathrm{it}^{k}-1}(t,\xi_j)\}_{j \in \mathbb{Z}}$ and recall \eqref{eq:sec_lagr}.

Using \eqref{eq:intLagrSystem},  \eqref{eq:numLagrSystemIter}, and \eqref{cancel} we find 
\begin{align}
	\|\tilde y(t)-y(t)\|_\infty& \leq \|\{\tilde y_j(t)\}-\{y_j(t)\}\|_{\ell^\infty} \nonumber \\ \nonumber
	&  \leq \int_{0}^h \| \{\tilde U_j(\tau_k^*+s)\}-\{U_j(\tau_k^*+s)\}\|_{\ell^\infty} ds\\ \nonumber
	& \leq \frac14 \int_0^h \int_0^s \sum_{j=-\infty}^\infty \vert \tilde V_{j+\frac12, \xi}-V_{j+\frac12, \xi}\vert (\tau_k^*+r)(\xi_{j+1}-\xi_j) dr ds\\ \nonumber
	& \leq \frac18 \sum_{j=-\infty}^\infty V_{j+\frac12, \xi}(\tau_k^*)\chi_{\{m:\tau_k^*<\tau_{m+\frac12}<\tau_{k+1}^*\}}(j)(\xi_{j+1}-\xi_j) h^2\\ \nonumber
	& \leq \frac18 V_{\Dx, \infty}(\tau_k^*)\Dt_k \Dt\\ \label{est:yn}
	& \leq \frac 18 G_\infty \Dt_k\Dt.
\end{align}
Observe that  for $\xi \in B_k \cap [\xi_j, \xi_{j+1}]$, it holds by \eqref{eq:intLagrSystem} and \eqref{eq:numLagrSystemIter}--\eqref{eq:beta} that
\begin{align*}
	\tilde{V}_{j+\frac{1}{2}, \xi}(t) - V_{j + \frac{1}{2}, \xi}(t) &= \begin{cases}
	0, & t < \tau_{j+\frac{1}{2}}, \\
	\left(\alpha(y_j^{M_{\mathrm{it}}^k-1}(\tau_{k+1}^*))-\alpha(\tilde{y}_j(\tau_{j+\frac{1}{2}}))\right)\!V_{j+\frac{1}{2}, \xi}(\tau_k^*), & \tau_{j+\frac{1}{2}} \leq t. 
	\end{cases}
\end{align*}
Furthermore, \eqref{eq:upper_boundU} does not only hold for $U(t)$, but also for $\tilde U_j(t)$ and $U_j^{M_{\mathrm{it}}^k-1}(t)$ for any $j \in \mathbb{Z}$, which implies, for all $j \in \mathbb{Z}$ with $\tau_{j+\frac12}\in (\tau_k^*, \tau_{k+1}^*]$ that 
\begin{align}\label{error:ds:alpha}
	\vert \tilde y_j(\tau_{j+\frac{1}{2}})- y_j^{M_{\mathrm{it}}^k-1}(\tau_{k+1}^*)\vert &= \left|\int_{\tau_k^*}^{\tau_{j+\frac{1}{2}}}\tilde{U}_j(s)ds - \int_{\tau_k^*}^{\tau_{k+1}^*}U_j^{M_{\mathrm{it}}^k-1}(s)ds \right| \nonumber
	\\ & \leq 2\Big(\|u\|_\infty +\left(1 + \sqrt{2} \right)\!\sqrt{G_\infty} + \frac{1}{4} G_{\infty}T \Big) \Dt_k,
\end{align} 
since $\Dx\leq 1$. Thus, 
\begin{align}\label{est:Un}
\|\tilde U(t)-U(t)\|_\infty& \leq \|\{\tilde U_j(t)\}-\{U_j(t)\}\|_{\ell^\infty} \nonumber \\ \nonumber
	& \leq \frac14 \int_0^h \sum_{j=-\infty}^\infty \vert \tilde V_{j+\frac12, \xi}(\tau_k^*+s)-V_{j+\frac12, \xi}(\tau_k^*+s)\vert(\xi_{j+1}-\xi_j)ds\\ \nonumber
	& \leq \frac14 \|\alpha '\|_\infty \! \sum_{j=-\infty}^\infty \! \vert \tilde y_j(\tau_{j+\frac{1}{2}})- y_j^{M_{\mathrm{it}}^k-1}(\tau_{k+1}^*)\vert \chi_{\{m: \tau_k^*<\tau_{m+\frac12}\leq \tau_{k+1}^*\}}(j)\\ \nonumber
	& \qquad \qquad \qquad \qquad\qquad  \times V_{j+\frac12,\xi}(\tau_{k}^*)(\xi_{j+1}-\xi_j)h\\ 
	& \leq \frac12 \|\alpha'\|_\infty  \Big(\|u\|_\infty +\left(1 + \sqrt{2} \right)\!\sqrt{G_\infty}  + \frac{1}{4} G_{\infty}T \Big)G_\infty \Dt_k\Dt \nonumber
	\\ &= \frac{1}{2} \tilde{C}_s G_{\infty}\Dt_k \Dt, 
\end{align}
with $\tilde{C}_s$ defined in \eqref{eq:tildeCs}. 

Next, we focus on $g(\tilde X(t), X(t))$. To this end, let $\Omega_m^j(t) = \Omega_m(\tilde{X}(t)) \cap [\xi_j, \xi_{j+1}]$ for $m \in \{c, d\}$ and $j \in \mathbb{Z}$, then 
\begin{align}\label{eq:g_proof}
g(\tilde X(t), X(t))(\xi)= 
	\begin{cases} 0, &  \xi \in \Omega_c^j(t) \cap B_k^c,\\
	|\alpha(\tilde{y}_j(\tau_{j+\frac{1}{2}}))-\alpha(y_j^{M_{\mathrm{it}}^k-1}(\tau_{k+1}^*))\vert V_{\xi}(\tau_k^*, \xi), &  \xi \in \Omega_c^j(t) \cap B_k, \\ 
\!\! \|\alpha'\|_{\infty}\bigl(|\tilde{y}(t, \xi) - y(t, \xi)| + |\tilde{U}(t, \xi) - U(t, \xi)|\bigr)
\\ \quad \qquad \times V_{\xi}(\tau_k^*, \xi), & 	 \xi \in \Omega_d^j(t). 
\end{cases} \raisetag{-60pt}
\end{align}
This means that $g(\tilde X(t), X(t))(\xi)$ can have jumps across wave breaking times and is in general discontinuous with respect to time. On the other hand, one has, using \eqref{est:yn}, \eqref{error:ds:alpha}, and \eqref{est:Un}, 
\begin{align} \label{est:gn}
	\|g(\tilde X(t), X(t))\|_2 & \leq  \|\alpha'\|_{\infty} \!\left( \|\{\tilde y_j(t)\} - \{y_j(t)\} \|_{\ell^{\infty}} + \|\{\tilde U_j(t)\} - \{U_j(t)\} \|_{\ell^{\infty}} \right) \!\sqrt{G_{\infty}} \nonumber \\ 
	& \qquad + \|\alpha'\|_\infty\|\{\tilde y_j(\tau_{j+\frac{1}{2}})\}- \{y_j^{M_{\mathrm{it}}^k-1}(\tau_{k+1}^*)\}\|_{\ell^{\infty}} \sqrt{\mathrm{meas}(B_k)} \nonumber \\
	& \leq \frac{1}{2}\|\alpha'\|_{\infty}\!\left(\frac{1}{4} + \tilde{C}_s\right)\!G_{\infty}^{\frac{3}{2}}\Dt_k \Dt  +  2\tilde{C}_s\sqrt{\mathrm{meas}(B_k)}\Dt_k, 
\end{align}
where we used that $0\leq V_{\Delta x,\xi}(\tau_k^*)\leq 1$.
As an immediate consequence,
\begin{align}\nonumber
\|\tilde U_\xi(t)-U_{\xi}(t)\|_2& \leq \frac12 \int_0^h \|g(\tilde X(\tau_k^*+s), X(\tau_k^*+s))\|_2 ds\\ \label{est:Uxin}
& \leq \frac14 \|\alpha'\|_\infty \!\left(\frac{1}{4} + \tilde{C}_s\right)\! G_{\infty}^{\frac{3}{2}} \Dt_k^2 \Dt + \tilde{C}_s \sqrt{\mathrm{meas}(B_k)}\Dt_k^2 \nonumber
\\ &\leq  \frac14 \|\alpha'\|_\infty \! \left(\frac{1}{4} + \tilde{C}_s\right)\! G_{\infty}^{\frac{3}{2}} T \Dt_k \Dt + \tilde{C}_s \sqrt{\mathrm{meas}(B_k)}T\Dt_k, 
\end{align}
which holds by \eqref{rel:Vxig}, and, moreover, 
\begin{align} \label{est:yxi:n}
\|\tilde y_\xi(t)-y_{ \xi}(t)\|_2&  \leq \int_0^h \|\tilde U_\xi(\tau_k^*+s)-U_{\xi}(\tau_k^*+s)\|_2ds \nonumber \\ 
& \leq \frac14 \|\alpha'\|_\infty \! \left(\frac{1}{4} + \tilde{C}_s\right)\!  G_{\infty}^{\frac{3}{2}} T^2 \Dt_k \Dt + \tilde{C}_s \sqrt{\mathrm{meas}(B_k)}T^2 \Dt_k.
\end{align}
Combining \eqref{est:yn}, \eqref{est:Un}, and \eqref{est:gn}--\eqref{est:yxi:n} proves the statement.
\end{proof}

\begin{lemma}\label{thm:localError}
Let $X=L\circ P_{\Dx}((u,\mu,\nu))\in \F_0^{\alpha,0}$ for some $(u,\mu,\nu)\in \D_0^\alpha$ and $\Dx\leq 1$. Moreover, recall that $X_{\Delta x}(t)= S_{\Delta x,t}(X)$ for $t=\tau_k^* + h \in [\tau_k^*, \tau_{k+1}^*]$, then 
\begin{align*}
	d_s(S_h(X_{\Dx}(\tau_k^*)), X_{\Dx}(t))&+\sum_{l=0}^{k-1} d_s(S_{\Dt_l}(X_{\Dx}(\tau_{l}^*)),X_{\Dx}(\tau_{l+1}^*))
	\\ & \leq (1+\tilde C_s)(1+T)^2\big(1+\|\alpha'\|_{\infty}\sqrt{G_{\infty}}\big)G_\infty T \Dt   
	\\ & \qquad + 2\tilde C_s (1+T)^3 \sqrt{G_\infty} \sqrt{T}\sqrt{\Dt}, 
\end{align*}
where 
\begin{equation*}
\tilde C_s= \|\alpha'\|_\infty \Big(\|u\|_\infty + (1+\sqrt{2})\sqrt{G_\infty}+ \frac14 G_\infty T\Big).
\end{equation*}
\end{lemma}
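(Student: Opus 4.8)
The strategy is to sum the per-step local error bounds from Lemma~\ref{lem:locerror} over the discretization $\{\tau_k^*\}_{k=0}^N$ and control the resulting sum using Proposition~\ref{lem:TimestepBound} together with the fact that the sets $B_l = \{\xi : \tau_l^* < \tau(\xi) \le \tau_{l+1}^*\}$ are pairwise disjoint. First I would note that each summand $d_s(S_{\Dt_l}(X_{\Dx}(\tau_l^*)), X_{\Dx}(\tau_{l+1}^*))$, as well as the first term $d_s(S_h(X_{\Dx}(\tau_k^*)), X_{\Dx}(t))$, is governed by Lemma~\ref{lem:locerror}: it vanishes whenever $\Dt_l > \Dt$ (including the case where no wave breaking occurs in the open interval), and otherwise it is bounded by
\begin{equation*}
	(1+\tilde C_s)(1+T)^2\big(1+\|\alpha'\|_{\infty}\sqrt{G_{\infty}}\big)G_\infty \Dt \Dt_l +2 \tilde C_s (1+T)^2\sqrt{\mathrm{meas}(B_l)}\,\Dt_l.
\end{equation*}
So it suffices to bound $\sum_l \Dt_l$ for the indices with $\Dt_l \le \Dt$ by something like $T$, and to bound $\sum_l \sqrt{\mathrm{meas}(B_l)}\,\Dt_l$ appropriately.

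For the first sum, since each $\Dt_l \le \Dt$ on the relevant indices and $\sum_{l} \Dt_l \le T$ (the $\Dt_l$ are the lengths of a partition of $[0,T]$, augmented by the possibly-shorter final piece), the first contribution is at most $(1+\tilde C_s)(1+T)^2(1+\|\alpha'\|_\infty\sqrt{G_\infty})G_\infty \Dt\, T$, which already matches the first term in the claimed bound. For the second sum, the key point is that $\Dt_l \le \Dt$ on each relevant index, so $\sqrt{\mathrm{meas}(B_l)}\,\Dt_l \le \sqrt{\mathrm{meas}(B_l)}\sqrt{\Dt}\sqrt{\Dt_l}$; then apply Cauchy--Schwarz over the (at most $N+1$) indices to get
\begin{equation*}
	\sum_l \sqrt{\mathrm{meas}(B_l)}\,\Dt_l \le \sqrt{\Dt}\,\Big(\sum_l \mathrm{meas}(B_l)\Big)^{1/2}\Big(\sum_l \Dt_l\Big)^{1/2} \le \sqrt{\Dt}\,\sqrt{\mathrm{meas}\big(\cup_l B_l\big)}\,\sqrt{T},
\end{equation*}
where disjointness of the $B_l$ gives $\sum_l \mathrm{meas}(B_l) = \mathrm{meas}(\cup_l B_l)$. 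It remains to bound $\mathrm{meas}(\cup_l B_l)$: since $\cup_l B_l \subset \{\xi : \tau(\xi) \le T\} \subset \Omega_d(X) \cap \Omega_c(S_T(X))$ up to the null set where $\tau=0$ exactly, the estimate \eqref{eq:boundMeasure} (i.e.\ \cite[Corr. 2.4]{AlphaHS}) yields $\mathrm{meas}(\cup_l B_l) \le (1+\tfrac12 T)^2 G_\infty \le (1+T)^2 G_\infty$, so this contribution is bounded by $2\tilde C_s (1+T)^2 \cdot (1+T)\sqrt{G_\infty}\sqrt{T}\sqrt{\Dt} = 2\tilde C_s(1+T)^3\sqrt{G_\infty}\sqrt{T}\sqrt{\Dt}$, matching the second claimed term.

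The main obstacle I anticipate is bookkeeping the indices carefully: one must separate those $l$ with $\Dt_l > \Dt$ (where the local error is exactly zero by \eqref{est:locerror1}) from those with $\Dt_l \le \Dt$, confirm that the ``first term'' $d_s(S_h(X_{\Dx}(\tau_k^*)), X_{\Dx}(t))$ with $t = \tau_k^* + h$, $0 \le h \le \Dt_k$, is itself covered by Lemma~\ref{lem:locerror} applied on $[\tau_k^*, \tau_k^*+h]$ (replacing $\Dt_k$ by $h \le \Dt_k$, which only improves the bound), and verify that $\cup_l B_l$ together with the partial interval at step $k$ indeed lies in the set to which \eqref{eq:boundMeasure} applies. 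Once these identifications are in place, the estimate is a direct combination of Proposition~\ref{lem:TimestepBound}, disjointness, Cauchy--Schwarz, and \eqref{eq:boundMeasure}, with no further analytic input required.
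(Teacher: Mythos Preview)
Your proposal is correct and follows essentially the same route as the paper: restrict to the indices $\mathcal{K}=\{l:\Dt_l\le\Dt\}$ (where Lemma~\ref{lem:locerror} gives a nonzero bound), use $\sum_{l\in\mathcal{K}}\Dt_l\le T$ for the first contribution, and for the second apply Cauchy--Schwarz together with disjointness of the $B_l$ and the measure bound \eqref{eq:boundMeasure}. Two minor remarks: Proposition~\ref{lem:TimestepBound} is not actually needed here (the bound $\sum_l\Dt_l\le T$ comes directly from the $\Dt_l$ forming a partition of $[0,T]$), and Lemma~\ref{lem:locerror} already covers the partial step $t=\tau_k^*+h$ with the bound stated in terms of $\Dt_k$, so no replacement of $\Dt_k$ by $h$ is required.
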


\begin{proof}
Introduce 
\begin{equation*}
	\mathcal{K}=\{l: \Dt_l\leq \Dt\}.
\end{equation*}
Then by Lemma~\ref{lem:locerror} and the Cauchy--Schwarz inequality, 
\begin{align*}
d_s(S_h(X_{\Dx}(\tau_k^*))&, X_{\Dx}(t)) +\sum_{l=0}^{k-1} d_s(S_{\Dt_l}(X_{\Dx}(\tau_{l}^*)), X_{\Dx}(\tau_{l+1}^*))\\
	&\leq (1+\tilde C_s)(1+T)^2\big(1+\|\alpha'\|_{\infty}\sqrt{G_{\infty}}\big)G_\infty\Dt \sum_{l\in \mathcal{K}} \Dt_l
	\\ & \qquad + 2\tilde C_s(1+T)^2 \sum_{l\in \mathcal{K}}\sqrt{\mathrm{meas}(B_l)}\Dt_l\\
	& \leq (1+\tilde C_s)(1+T)^2\big(1+\|\alpha'\|_{\infty}\sqrt{G_{\infty}}\big)G_\infty T \Dt  
	\\ & \qquad + 2\tilde C_s(1+T)^2  \sqrt{\sum_{l\in \mathcal{K}}\mathrm{meas}(B_l)} \sqrt{\sum_{l\in \mathcal{K}}\Dt_l^2}\\
& \leq (1+\tilde C_s)(1+T)^2\big(1+\|\alpha'\|_{\infty}\sqrt{G_{\infty}}\big)G_\infty T \Dt  
\\ & \qquad +2\tilde C_s(1+T)^2 \sqrt{\mathrm{meas} (\Omega_d(0)\cap \Omega_c(T))}\sqrt{T}\sqrt{\Dt}\\
& \leq (1+\tilde C_s)(1+T)^2\big(1+\|\alpha'\|_{\infty}\sqrt{G_{\infty}}\big)G_\infty T \Dt   
\\ & \qquad + 2\tilde C_s (1+T)^3 \sqrt{G_\infty} \sqrt{T}\sqrt{\Dt}.
\end{align*}
Here we used that $B_m\cap B_k=\emptyset$ for $m\neq k$ and that $\bigcup_{l =0}^{N-1}B_l= \Omega_d(0) \cap \Omega_c(T)$ combined with \eqref{eq:boundMeasure}. 
\end{proof}

\begin{remark}
Let $X=L\circ P_{\Dx}((u,\mu,\nu)) \in \F_0^{\alpha,0}$ for some $(u,\mu,\nu)\in \D_0^\alpha$. Combining \eqref{eq:inductionArg}, Lemma~\ref{thm:localError}, and \eqref{eq:dt} we conclude that 
\begin{equation}\label{est:errorit}
d_s(S_t(X),S_{\Dx,t}(X))\leq \mathcal{O}(\Dt^{\frac12})=\mathcal{O}(\Dx^\frac14).
\end{equation}
Inspecting the proof of Lemma~\ref{lem:locerror} and Lemma~\ref{thm:localError} reveals that the leading error term is the second term in \eqref{est:gn}, which relies on \eqref{error:ds:alpha}. However, the latter inequality contains no information about which local error, due to approximating $S_{\Dx,t}$ by $S_t$, is responsible for this leading order. 

Pick $j\in \mathbb{Z}$ such that $\tau_{j+\frac12}\in (\tau_k^*, \tau_{k+1}^*)$, then one has
\begin{align*}
		|\tilde{y}_j(\tau_{j+\frac{1}{2}}) - y_j^{M_{\mathrm{it}}^k-1}(\tau_{k+1}^*)| &\leq |\tilde{y}_j(\tau_{j+\frac{1}{2}}) - y_j(\tau_{j+\frac{1}{2}})| + |y_j(\tau_{j+\frac{1}{2}}) - y_j(\tau_{k+1}^*)|
		\\ & \qquad + |y_j^{M_{\mathrm{it}}^k}(\tau_{k+1}^*) - y_{j}^{M_{\mathrm{it}}^k-1}(\tau_{k+1}^*)|. 
	\end{align*}
The first term is of order  $\mathcal{O}(\Dx)$ by \eqref{est:yn}, whereas the third term is of order $\mathcal{O}(\Dx^2)$ by \eqref{eq:dt} and \eqref{eq:stopCond}.  For the second term, on the other hand, one obtains, due to \eqref{eq:upper_boundU}, 
	\begin{equation*}
		 |y_j(\tau_{j+\frac{1}{2}}) - y_j(\tau_{k+1}^*)| \leq \int_{\tau_{j+\frac{1}{2}}}^{\tau_{k+1}^*}|U_j(s)|ds =\mathcal{O}(\Dt)= \mathcal{O}(\sqrt{\Dx}). 
	\end{equation*}
	In other words, the dominating local error is introduced by delaying the computation of the $\alpha$-values, i.e., we compute them at $\tau_{k+1}^*$ instead of $\tau_{j+\frac{1}{2}}$.
\end{remark}

Recalling \eqref{eq:dt},  we have thus shown the following result in this section.
\begin{theorem}\label{thm:convLagr}
	Given $(u, \mu, \nu)\in \D_0^\alpha$ and $t \in [0, T]$, let $X=L((u, \mu,\nu))$ and  $X_{\Dx}=L\circ 	P_{\Dx}((u,\mu,\nu))$, then 
\begin{equation*}
	d(X(t), X_{\Dx}(t))= d(S_t(X), S_{\Dx,t}(X_{\Dx}))\to 0 \quad \text{ as }\quad \Delta x\to 0.
\end{equation*}
\end{theorem}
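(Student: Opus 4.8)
The plan is to assemble the estimates accumulated over the course of this section; essentially no new work is required. First I would fix $(u,\mu,\nu)\in\D_0^\alpha$, observe that $X=L((u,\mu,\nu))$ and $X_{\Dx}=L\circ P_{\Dx}((u,\mu,\nu))$ both belong to $\F_0^{\alpha,0}$ — the latter because $P_{\Dx}$ maps $\D_0^\alpha$ into itself and $\mu=\nu$ forces $V=H$ in the image of $L$ — and assume $\Dx\leq 1$ so that all the auxiliary lemmas apply. Setting $\widehat X_{\Dx}(t):=S_t(X_{\Dx})$, the triangle inequality for $d$ together with Theorem~\ref{thm:Lipschitz} (applied with $X_0=X\in\F^{\alpha,0}$ and $\hat X_0=X_{\Dx}\in\F_0^{\alpha,0}$) gives, exactly as in \eqref{eq:triangle},
\[ d(X(t),X_{\Dx}(t))\leq C(t)e^{D(t)t}\,d(X,X_{\Dx})+d\!\left(\widehat X_{\Dx}(t),X_{\Dx}(t)\right)\!. \]
By Corollary~\ref{cor:convini} the first term tends to $0$ as $\Dx\to0$, so the whole statement reduces to \eqref{claim:2}, i.e.\ to showing $d(\widehat X_{\Dx}(t),X_{\Dx}(t))\to0$.

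For that second term I would invoke Lemma~\ref{lem:ConstantCs} with the numerical initial datum $X_{\Dx}$ playing the role of ``$X$'', which bounds $d(\widehat X_{\Dx}(t),X_{\Dx}(t))$ by $C_s\,d_s(\widehat X_{\Dx}(t),X_{\Dx}(t))$, where $C_s$ depends only on $\|u\|_\infty$, $G_\infty$, $\|\alpha'\|_\infty$ and $T$, hence is uniform in $\Dx$. Next comes the inductive chain over the non-uniform mesh $\{\tau_k^*\}_{k=0}^N$: applying Corollary~\ref{cor:GronwallIntegrated} on each subinterval $[\tau_l^*,\tau_{l+1}^*]$, together with the decomposition \eqref{eq:recursiveBeg} and $\widehat X_{\Dx}(0)=X_{\Dx}(0)$, telescopes to \eqref{eq:inductionArg},
\[ d_s(\widehat X_{\Dx}(t),X_{\Dx}(t))\leq e^{\lambda T}\!\left(d_s\big(S_h(X_{\Dx}(\tau_k^*)),X_{\Dx}(t)\big)+\sum_{l=0}^{k-1}d_s\big(S_{\Dt_l}(X_{\Dx}(\tau_l^*)),X_{\Dx}(\tau_{l+1}^*)\big)\right)\!. \]
Lemma~\ref{thm:localError} then estimates the right-hand sum of local errors by $(1+\tilde C_s)(1+T)^2\big(1+\|\alpha'\|_\infty\sqrt{G_\infty}\big)G_\infty T\,\Dt+2\tilde C_s(1+T)^3\sqrt{G_\infty}\sqrt{T}\sqrt{\Dt}$, again with $\tilde C_s$ independent of $\Dx$.

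Finally, the coupling \eqref{eq:dt}, namely $\Dt\leq\sqrt{8\Dx/(\|\alpha'\|_\infty G_\infty)}$, forces $\Dt\to0$ and hence $\sqrt{\Dt}\to0$ as $\Dx\to0$; combined with the previous display this yields $d_s(\widehat X_{\Dx}(t),X_{\Dx}(t))=\mathcal{O}(\sqrt{\Dt})=\mathcal{O}(\Dx^{1/4})\to0$, whence $d(\widehat X_{\Dx}(t),X_{\Dx}(t))\to0$ and the theorem follows. Because every constant in play — $C(t)$, $D(t)$, $C_s$, $\lambda$, $\tilde C_s$ — is uniform in $\Dx$ over $t\in[0,T]$, there is no genuine analytic obstacle remaining: the proof is pure assembly of Lemmas~\ref{lem:ConstantCs}, \ref{lem:GronwallIntegrated}, \ref{lem:locerror}, and~\ref{thm:localError} together with Corollaries~\ref{cor:convini} and~\ref{cor:GronwallIntegrated}. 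The only point demanding care is bookkeeping: making sure the datum fed into Lemma~\ref{lem:ConstantCs} (and into the Gronwall/local-error lemmas) is the \emph{numerical} initial datum $X_{\Dx}=L\circ P_{\Dx}((u,\mu,\nu))$ rather than the exact datum $X$, and that the parametrization $t=\tau_k^*+h$ with $0\leq h\leq\Dt_k$ used in \eqref{eq:inductionArg} is consistent across the telescoping sum.
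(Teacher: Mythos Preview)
Your proposal is correct and follows exactly the assembly the paper performs: the triangle inequality \eqref{eq:triangle} together with Theorem~\ref{thm:Lipschitz} and Corollary~\ref{cor:convini} handles the first term, while Lemma~\ref{lem:ConstantCs}, the telescoping bound \eqref{eq:inductionArg} (via Corollary~\ref{cor:GronwallIntegrated}), Lemma~\ref{thm:localError}, and the coupling \eqref{eq:dt} dispose of the second. The paper in fact presents Theorem~\ref{thm:convLagr} as a summary of precisely this chain of results, with no additional argument.
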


\subsection{Convergene in Eulerian coordinates}

Finally, we can consider in what sense the mapping $M$ from Definition~\ref{def:MapM} transports the convergence in Theorem~\ref{thm:convLagr} into Eulerian coordinates. 

\begin{theorem}\label{thm:EulerianConvergence}
	Given $(u_0, \mu_0, \nu_0) \in \D_0^{\alpha}$, introduce $(u, \mu, \nu)(t) = T_t((u_0, \mu_0, \nu_0))$ and $(u_{\Dx}, \mu_{\Dx}, \nu_{\Dx})(t) = T_{\Dx, t} ((u_0, \mu_0, \nu_0))$ for $t \in [0, T]$. Then as $\Dx \rightarrow 0$, we have\footnote{Here $\eta_{\Dx} \implies \eta$ indicates that $\int_{\R}\phi d\eta_{\Dx} \rightarrow \int_{\R}\phi d\eta$ for all $\phi \in C_b(\R) = C(\R) \cap L^{\infty}(\R)$, whereas $\eta_{\Dx} \overset{\ast}{\rightharpoonup} \eta$ means that $\int_{\R}\phi d\eta_{\Dx} \rightarrow \int_{\R}\phi d\eta$ for all $\phi \in C_c(\R)$.}
	\begin{subequations}
	\begin{align}
		u_{\Dx}(t) &\rightarrow u(t) \hspace{0.65cm} \mathrm{in } L^{\infty}(\R), \label{eq:conv_u}\\
		u_{\Dx, x}(t) &\rightharpoonup u_x(t)  \hspace{0.5cm} \mathrm{in } L^2(\R), \label{eq:conv_ux}  \\
		\nu_{\Dx}(t) &\hspace{-0.05cm}\implies \nu(t), \label{eq:conv_nu} \\ 
		\left(y_{\Dx}(t)\right)_{\#}\left(g_p(X_{\Dx}(t))d\xi\right) &\overset{\ast}{\rightharpoonup} (y(t))_{\#} \left(g_p(X(t))d\xi\right)\!,   \quad p \in \{1, 2, 3\}. \label{eq:conv_g123} 
		\end{align}
	\end{subequations}
\end{theorem}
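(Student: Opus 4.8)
The strategy is to transport the Lagrangian convergence from Theorem~\ref{thm:convLagr} through the mapping $M$, using the same toolbox developed for the projection operator and essentially mimicking the stability analysis of $M$ that appears in \cite{AlphaAlgorithm} for constant $\alpha$. We already have $d(X(t),X_{\Dx}(t))\to 0$, and Definition~\ref{def:metric} unpacks this into a long list of convergences in Lagrangian coordinates: $\|y-y_{\Dx}\|_\infty \to 0$, $\|U-U_{\Dx}\|_\infty\to 0$, $\|H_\xi-H_{\Dx,\xi}\|_1\to 0$, $\|y_\xi-y_{\Dx,\xi}\|_2\to 0$, $\|U_\xi-U_{\Dx,\xi}\|_2\to 0$, $\|g_1(X)+y_\xi-g_1(X_{\Dx})-y_{\Dx,\xi}\|_2\to 0$, $\|H_\xi-H_{\Dx,\xi}\|_2\to 0$, and the $g_2,g_3$ and $UH_\xi$ terms as well. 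I would first record these componentwise. The whole proof then reduces to four essentially independent arguments, one for each displayed convergence.

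First, \eqref{eq:conv_u}: since $u(t,\cdot)=U(t)\circ y(t)^{-1}$ (in the appropriate generalized sense) and similarly for $u_{\Dx}$, one estimates $\|u(t)-u_{\Dx}(t)\|_\infty$ by a combination of $\|U(t)-U_{\Dx}(t)\|_\infty$ and the modulus of continuity of $u(t)$ composed with $\|y(t)-y_{\Dx}(t)\|_\infty$; this is exactly the computation behind \cite[Sec.~3.2.4]{AlphaAlgorithm} and \cite[Lem.~4.4]{AlphaAlgorithm}, and the bound $\|u-u_{\Dx}\|_\infty\leq C(\|U-U_{\Dx}\|_\infty + \|y-y_{\Dx}\|_\infty^{1/2})$ follows from the uniform $E_2$-bound / Hölder continuity of $u(t)$, which in turn follows from the uniform energy bound $V_\infty(t)\leq G_\infty$. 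For \eqref{eq:conv_ux}: combine the strong $L^2$-bound on $u_{\Dx,x}(t)$ (uniformly bounded in $\Dx$, since $\|u_{\Dx,x}(t)\|_2^2\le V_{\Dx,\infty}(t)\le G_\infty$) with density — it suffices to test against $\phi\in C_c^\infty(\R)$, write $\int u_{\Dx,x}\phi = -\int u_{\Dx}\phi_x \to -\int u\phi_x = \int u_x\phi$ using \eqref{eq:conv_u}; the uniform $L^2$-bound then upgrades test functions to all of $L^2$. For \eqref{eq:conv_nu}: $\nu_{\Dx}(t)=y_{\Dx}(t)_\#(H_{\Dx,\xi}d\xi)$ and $\nu(t)=y(t)_\#(H_\xi d\xi)$; for $\phi\in C_b(\R)$ write $\int\phi\,d\nu_{\Dx}(t)=\int \phi(y_{\Dx}(t,\xi))H_{\Dx,\xi}(t,\xi)d\xi$ and compare with $\int\phi(y(t,\xi))H_\xi(t,\xi)d\xi$, splitting into $\int(\phi\circ y_{\Dx}-\phi\circ y)H_{\Dx,\xi}$ (controlled by uniform continuity of $\phi$ on compacts together with $\|y-y_{\Dx}\|_\infty\to 0$, plus tightness since total mass is $H_\infty=G_\infty$ for both) and $\int\phi\circ y\,(H_{\Dx,\xi}-H_\xi)$ (controlled by $\|\phi\|_\infty\|H_\xi-H_{\Dx,\xi}\|_1\to 0$); note total masses agree, which promotes weak-$*$ to narrow ($\Rightarrow$) convergence. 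For \eqref{eq:conv_g123}: the same pushforward argument, now testing against $\phi\in C_c(\R)$ and using $g_p(X_{\Dx}(t))\to g_p(X(t))$ in $L^1$ — for $p=1$ this is part of $d$-convergence directly (modulo the $y_\xi$-shift, which converges separately in $L^1\cap L^2$), and for $p=2,3$ the $L^2$-convergence from $d$ gives $L^1$-convergence on the relevant compact support; tightness here is immediate since $\phi$ has compact support.

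The main obstacle, I expect, is the compositional/pushforward bookkeeping rather than any single hard estimate: $y(t,\cdot)$ need not be strictly increasing (it is constant on the intervals $[\xi_{3j},\xi_{3j+1}]$ and wherever energy concentrates), so "$u=U\circ y^{-1}$" must be interpreted via Definition~\ref{def:MapM}, and one must check carefully that the various $L^1$/$L^\infty$ convergences survive composition with $y_{\Dx}(t)$ — in particular that the measures $y_{\Dx}(t)_\#(\cdot\,d\xi)$ do not leak mass to infinity. This is handled by the uniform bounds (total energy $\le G_\infty$, $0\le H_{\Dx,\xi}\le e^{t/2}$, $\|y(t)-\id\|_\infty$ bounded on $[0,T]$ via \eqref{eq:int_ODE1} and the velocity bound \eqref{eq:upper_boundU}), giving uniform tightness. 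Beyond that, the proof is a routine—if lengthy—adaptation of the $M$-continuity arguments in \cite[Sec.~4]{AlphaAlgorithm}, and I would simply cite that reference for the parts of the computation that are unchanged by allowing $\alpha\in W^{1,\infty}(\R,[0,1))$, flagging that the only new ingredient is that $g_1$ now involves the non-constant $\alpha$, which is already accounted for in $d$.
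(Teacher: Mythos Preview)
Your proposal is correct and follows essentially the same route as the paper. The paper simply cites \cite[Thm.~4.12]{AlphaAlgorithm} for \eqref{eq:conv_u}--\eqref{eq:conv_nu} (your sketched arguments are precisely what that reference does), and for \eqref{eq:conv_g123} it carries out the pushforward split you describe, using Cauchy--Schwarz together with the observation that $\phi\circ y_{\Dx}(t,\cdot)$ has support in a fixed compact set (since $y-\id\in L^\infty$ and $y_{\Dx}\to y$ uniformly) to get $\phi\circ y_{\Dx}\to\phi\circ y$ in $L^2$ by dominated convergence; the only minor sharpening over your outline is that the paper works consistently in $L^2$ via Cauchy--Schwarz rather than passing through $L^1$ on compacts, which avoids your slight overstatement that $y_\xi-y_{\Dx,\xi}\to 0$ in $L^1$ (the metric only gives $L^2$, but this suffices).
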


\begin{proof}
Since Theorem~\ref{thm:convLagr} implies that $y_{\Dx}(t) \rightarrow y(t)$ in $L^{\infty}(\R)$, $U_{\Dx}(t) \rightarrow U(t)$ in $L^{\infty}(\R)$ and $H_{\Dx, \xi}(t) \rightarrow H_{\xi}$ in $L^1(\R)$, one can prove \eqref{eq:conv_u}--\eqref{eq:conv_nu} by following the proof of \cite[Thm. 4.12]{AlphaAlgorithm}. 

To prove \eqref{eq:conv_g123} for $p=1$, pick any $\phi \in C_c(\R) $. Then 
\begin{align}\label{eq:weak_g}
	\int_{\R} \phi d\big(y(t)_{\#} \big(g_1(X(t))&d\xi\big)\big) - \int_{\R} \phi d\big(y_{\Dx}(t)_{\#}\big(g_1(X_{\Dx}(t))d\xi\big) \big) \nonumber
	\\ &\hspace{-0.4cm}= \int_{\R}\phi(y(t, \xi))g_1(X(t))(\xi)d\xi - \int_{\R}\phi(y_{\Dx}(t, \xi))g_1(X_{\Dx}(t))(\xi)d\xi \nonumber
	\\ &\hspace{-0.4cm}= \int_{\R}\left(\phi(y(t, \xi)) - \phi(y_{\Dx}(t, \xi)) \right)\!g_1(X(t))( \xi)d\xi \nonumber
	\\ &\hspace{-0.4cm} \qquad +  \int_{\R} \phi(y_{\Dx}(t, \xi)) \left(g_1(X(t)) - g_1(X_{\Dx}(t))\right)\!( \xi)d\xi. 
\end{align}
The fact that $y_{\Dx}(t) \rightarrow y(t)$ in $L^{\infty}(\R)$ by Theorem~\ref{thm:convLagr} together with $y-\id \in L^{\infty}(\R)$, implies that $\phi(y_{\Dx}(t, \cdot))$ has compact support for all $\Dx\leq 1$ and in particular that there exists a compact set $K$ such that the support of $\phi(y_{\Dx}(t, \cdot))$ is contained in $K$ for all $\Dx\leq 1$. As a consequence, it follows by the dominated convergence theorem that $\phi(y_{\Dx}(t)) \rightarrow \phi(y(t))$ in $L^2(\R)$ as $\Dx \rightarrow 0$. Furthermore, from \eqref{eq:g} it is apparent that $g_1(X(t)) \in L^2(\R)$. Hence 
\begin{align*}
	\left|\int_{\R}\left(\phi(y(t, \xi)) - \phi(y_{\Dx}(t, \xi)) \right)\!g_1(X(t))( \xi)d\xi\right| &\leq \|\phi(y(t)) - \phi(y_{\Dx}(t))\|_2 \|g_1(X(t))\|_2,
\end{align*}
and the first term on the right-hand side of \eqref{eq:weak_g} tends to $0$ as $\Dx\to 0$. 
Similarly, since $g_1(X_{\Dx}(t)) \rightarrow g_1(X(t))$ in $L^2(\R)$ by Theorem~\ref{thm:convLagr}, we can apply the Cauchy--Schwarz inequality to the other term in \eqref{eq:weak_g} and thereby establish \eqref{eq:conv_g123} for $p=1$. The same argument can be used to show \eqref{eq:conv_g123} for $p \in \{2, 3\}$. 
\end{proof}

\section{Numerical results}\label{sec:NumericalExperiments}
We complement the previous section by investigating two numerical examples for $T=3$. The first example displays a multipeakon solution, i.e., a solution for which the wave profile $u(t,\cdot)$ is piecewise linear for all $t\geq 0$, with three distinct breaking times contained within an interval of length $\frac{2}{9}$. Thus for coarse grids satisfying $\Dt \geq \frac{2}{9}$, a minimal time step is taken. This allows us to numerically examine how minimal time steps affect the approximation accuracy. 

Secondly, we continue our investigation of the cusp data from Example~\ref{ex:cuspData}, for which an infinitesimal amount of energy concentrates at each $t \in [0, 3]$ for the exact solution. Here the sequence $\{\tau_k^*\}_{k=0}^N$ extracted according to Algorithm~\ref{alg:pseudocode} really comes into play, as well as the fact that the number of iterates is uniformly bounded by $3$, see Section~\ref{sec:numericalImp}, which combined reduce the computation time significantly. 

\begin{example}[Multipeakon data]\label{ex:MultipeakonExample}Consider the Cauchy problem \eqref{eq:Hunter-Saxton} with \vspace{-0.1cm}
\begin{align*}
	u_0(x) &= \begin{cases}
		3, & x \leq 0, \\
		3-x & 0 < x \leq 1, \\
		2, & 1 < x \leq \frac{400}{361}, \\
		\frac{58}{19} - \frac{19}{20}x, & \frac{400}{361} \leq x \leq \frac{800}{361}, \\
		 \frac{18}{19}, & \frac{800}{361} < x \leq \frac{200}{81}, \\
		\frac{542}{171} - \frac{9}{10}x, &  \frac{200}{81} < x \leq \frac{100}{27}, \\
		-\frac{28}{171}, & \frac{100}{27} < x, 
		\end{cases}  \\
	d\mu_0 &= d\nu_0 = u_{0, x}^2dx, \\
	\alpha(x) &= \begin{cases}
		0, & x \leq \frac{1434}{361}, \\
		-\frac{478}{127} + \frac{361}{381}x, & \frac{1434}{361} < x \leq \frac{6879}{1444}, \\
		\frac{1}{1705}(361x -441), &\frac{6879}{1444} < x \leq 5, \\
		\frac{4}{5}, &5 < x. 
		\end{cases}
\end{align*}
The exact solution $(u, F)(t)$ experiences wave breaking at $\tau_1= 2$, $\tau_2=\frac{40}{19}$ and $\tau_3=\frac{20}{9}$, and $F(t)$ is discontinuous at each wave breaking occurrence.

In Figure~\ref{fig:comp_multipeakon}, we compare $(u_{\Dx}, F_{\Dx})$ to $(u, F)$ at  $t=0$, $\frac{20}{9}$, and  $3$, for $\Dx=10^{-1}$ and $10^{-4}$. As we approximate $(u_0, F_0)$ by $(u_{\Dx}, F_{\Dx})(0)$, there is in general a difference in the sets of points for which wave breaking occurs and the times for which it takes place. This leads to a discrepancy between $F_{\Dx, \infty}(\frac{20}{9})$ and $F_{\infty}(\frac{20}{9})$.  
Nevertheless, the algorithm captures the wave breaking phenomena for this example well, as the right plot in Figure~\ref{fig:convrateEx1} shows that the wave breaking time $\tau_2=\frac{20}{9}$ is just slightly delayed for the numerical solution with $\Dx=10^{-4}$. Additionally, for comparison, we have also plotted the conservative ($\alpha=0$) and dissipative ($\alpha=1$) solutions in Figure~\ref{fig:comp_multipeakon}. Observe that the dissipative solution is equal to a constant for $t \geq \frac{20}{9}$ since all its energy has been dissipated. 

\begin{figure}
	\includegraphics{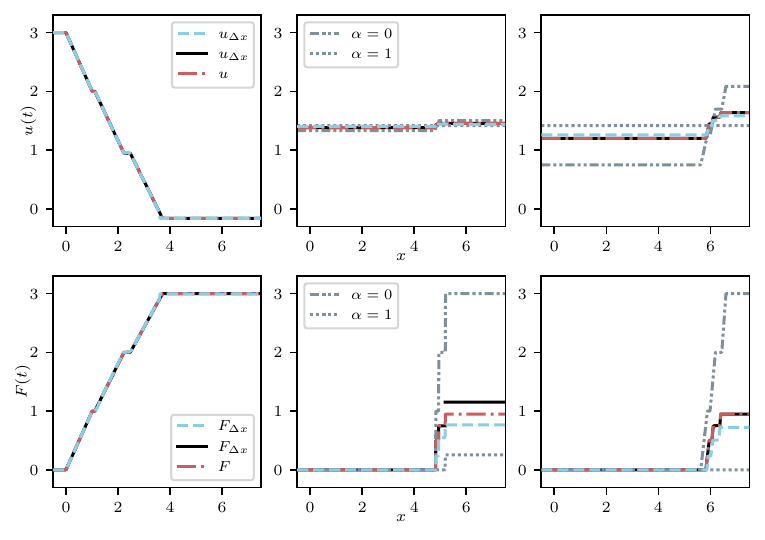}
	\captionsetup{width=.975\linewidth}
	\caption{Comparison of $u$ (top row, red dotted line) and $F$ (bottom row, red dotted line) with $u_{\Dx}$ (upper row) and $F_{\Dx}$ (lower row) for $\Dx = 10^{-1}$ (dashed blue line) and $\Dx=10^{-4}$ (solid black line) for Example~\ref{ex:MultipeakonExample}. The solutions are compared from left to right, at the times $t=0$, $\frac{20}{9}$, and $3$. Moreover, also the conservative ($\alpha=0$, gray dashdotted) and dissipative ($\alpha=1$, gray dotted) solutions are displayed at $t=\frac{20}{9}$ and $3$.}
	\label{fig:comp_multipeakon}
\end{figure}

Figure~\ref{fig:convrateEx1} also displays the numerically computed approximation errors (two leftmost figures). A careful inspection of Figure~\ref{fig:convrateEx1} reveals that a jump in the approximation errors is located between $\Dx=5\cdot 10^{-2}$ and $\Dx=10^{-2}$. This is a consequence of the minimal time step being enforced, which influences the number of grid cells which contribute to the local error. To be more precise, the numerical solution will have at least $3$ and at most $9$ distinct breaking times of which $3$ coincide with $\tau_1$, $\tau_2$, and $\tau_3$ and the remaining ones are attained at one grid cell each. Since \begin{equation*}
	\Dt \approx 1.678 \sqrt{\Dx},
\end{equation*}
by \eqref{eq:dt},
which implies that $\Dt \geq \frac{2}{9}$ whenever $\Dx \geq 1.75 \cdot 10^{-2}$, the number of grid cells contributing to the local error differs significantly for $\Dx=5\cdot 10^{-2}$ and $\Dx=10^{-2}$. 

\begin{figure}
	\includegraphics{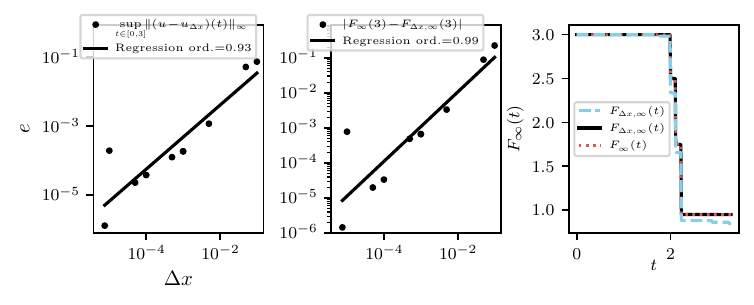}
	\captionsetup{width=.975\linewidth}
	\caption{In the two leftmost figures,  the errors $ \sup_{t \in [0, 3]} \|u(t) - u_{\Dx}(t)\|_{\infty}$ and $|F_{\infty}(3) - F_{\Dx, \infty}(3)|$ are plotted as functions of $\Dx$ for Example~\ref{ex:MultipeakonExample}, while the time evolution of the total numerical energy $F_{\Dx, \infty}(t)$ for $\Dx=5\cdot10^{-1}$ (dashed blue) and $\Dx=10^{-4}$ (solid black) is compared with $F_{\infty}(t)$ (dotted red) in the right figure. }
	\label{fig:convrateEx1}
\end{figure}
\end{example}

\begin{example}[Cusp data]\label{ex:CuspExample}
Consider the initial data from Example~\ref{ex:cuspData} again. The exact solution is not available for comparison, and to compensate, we compute a numerical solution with $\Dx =10^{-5}$, denoted $(u_{\mathrm{ref}}, F_{\mathrm{ref}})$, which is used as a reference solution.

\begin{figure}
	\includegraphics{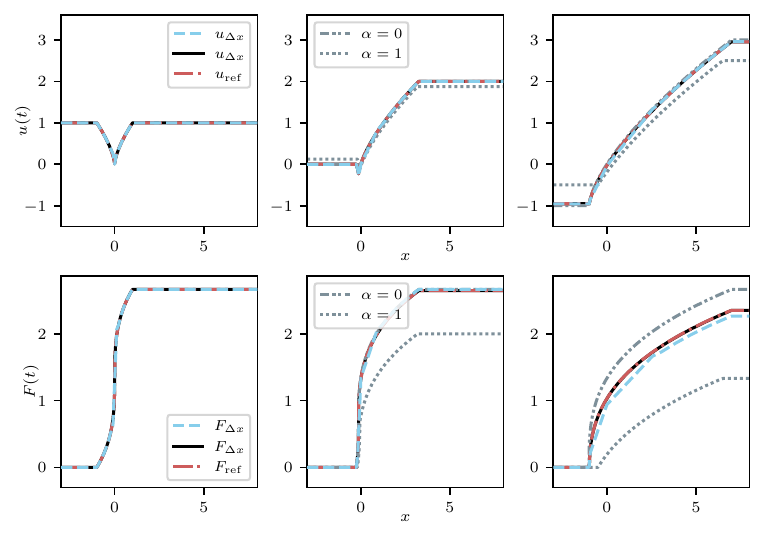}
	\captionsetup{width=.975\linewidth}
	\vspace{-0.4cm}
	\caption{A comparison of $u_{\mathrm{ref}}$ (top row, red dotted line) and $F_{\mathrm{ref}}$ (bottom row, red dotted line) with that of $u_{\Dx}$ (top row) and $F_{\Dx}$ (bottom row) for $\Dx=10^{-1}$ (blue dashed) and $\Dx = 10^{-4}$ (black solid) for Example \ref{ex:CuspExample}. The solutions are compared from left to right, at the times $t =0$, $\frac{3}{2}$, and $3$, with $\beta =\frac{19}{20}$. Moreover, also the conservative ($\alpha=0$, gray dashdotted) and dissipative ($\alpha=1$, gray dotted) solutions are displayed at $t=\frac{3}{2}$ and $3$.} 
	\label{fig:comparison_cusp}
\end{figure}

In Figure~\ref{fig:comparison_cusp}, $(u_{\Dx}, F_{\Dx})$  is compared to $(u_{\mathrm{ref}}, F_{\mathrm{ref}})$ at $t=0$, $\frac{3}{2}$, and $3$ for $\Dx= 10^{-1}$ and $10^{-4}$, where we also have included the exact conservative ($\alpha=0$) and exact dissipative ($\alpha=1$) solutions at $t=\frac{3}{2}$ and $3$ for comparison. The right snapshot in Figure~\ref{fig:convrateEx2} depicts the time evolution of the numerical total energy $F_{\Dx, \infty}(t)$. Again, approximating $(u_0, F_0)$ by $(u_{\Dx}, F_{\Dx})(0)$ introduces a difference in the sets of points for which wave breaking occurs and the times for which it takes place. Nevertheless, the majority of $\{\tilde{\tau}_j\}$ (sequence of distinct numerical breaking times) is concentrated inside the time interval $[0,3]$. On the other hand, $F_{\Dx, \infty}(t)$ for $\Dx=10^{-1}$ remains constant for short amounts of time and then drops abruptly, creating a zig-zag pattern. This behavior can be observed for any $F_{\Dx,\infty}(t)$ with $\Dx>0$ and is a consequence of wave breaking occurring continuously over $[0,3]$ for the exact solution, while it occurs discretely, i.e.,  at the distinct breaking times $\tilde{\tau}_j$ for the numerical solution.

Figure~\ref{fig:convrateEx2} also depicts the numerically computed approximation errors, whereas Table~\ref{tab:RunningTimes} summarizes the running times for $T=3$ and $\Dx=10^{-2}$, $10^{-3}$, $10^{-4}$, and $10^{-5}$. These were computed using Python 3.8 on a 2020 Macbook Pro with  Apple M1 chip and an 8-core CPU. 
Here evolving with a minimal time step refers to computing the solution using $T_{\Dx, t}$, defined in Definition~\ref{def:numSol}, while without minimal times steps means to compute the solution exactly between all successive numerical breaking times $\{\tilde{\tau}_j\}_{j \in \mathbb{N}}$ as outlined in Remark~\ref{rem:evolveExact}. One observes a substantial reduction in
 computation time for $\Dx = 10^{-4}$ and $\Dx=10^{-5}$, which can be explained as follows: For $\Dx=10^{-4}$, the number of sorted distinct breaking times, with value less than or equal to $T=3$,  is about $10^4$, whilst the extracted subsequence $\{\tau_k^*\}_{k=0}^{N}$ has length $169$, which is very close to the predicted value in Example~\ref{ex:cuspData}. At most three iterations and hence three numerical integrations are performed for each minimal time step, which leads to around $500$ numerical integrations in total, in contrast to $10^4$ which are required when evolving without minimal time steps. A similar analysis extends to $\Dx=10^{-5}$, but we ran into memory issues when trying to solve without minimal time steps.

\begin{figure}[H]
	\includegraphics{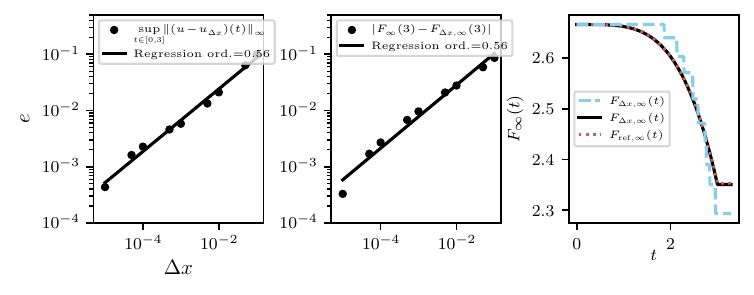}
	\captionsetup{width=.975\linewidth}
	\caption{The errors $\sup_{t \in [0, 3]} \|u_{\mathrm{ref}}(t) - u_{\Dx}(t)\|_{\infty}$ and $|F_{\mathrm{ref}, \infty}(3) - F_{\Dx, \infty}(3)|$ are plotted as functions of $\Dx$ for Example~\ref{ex:CuspExample} in the two left figures, while the time evolution of the total numerical energy $F_{\Dx, \infty}(t)$ for $\Dx=10^{-1}$ (dashed blue) and $\Dx=10^{-4}$ (solid black) is compared to $F_{\mathrm{ref}, \infty}(t)$ (dotted red) in the right plot. }
	\label{fig:convrateEx2}
\end{figure}

\begin{table}[H]
\begin{tabular}{ c|cccc } 
 \hline
 $\Dx$ & $10^{-2}$& $10^{-3}$ & $10^{-4}$ & $10^{-5}$ \\ 
 \hline
 With minimal & 0.0110 & 0.2655 & 8.4128 & 264.8032\\ 
 \hline
 Without minimal & 0.0378 & 3.5321 & 367.4902 & N/A\\ 
 \hline
\end{tabular}
\vspace{-0.3cm}
\caption{Running times in seconds for Example~\ref{ex:CuspExample} when evolving with and without minimal time steps.}
\label{tab:RunningTimes}
\end{table}

\end{example}

\bibliographystyle{plain}

\appendix

\section{Exact solution to the multipeakon example}
Consider the initial data 
\begin{subequations}\label{eq:exInitial2}
\begin{align}
		u_0(x) &= \begin{cases}
		3, & x \leq 0, \\
		3-x & 0 < x \leq 1, \\
		2, & 1 < x \leq \frac{400}{361}, \\
		\frac{58}{19} - \frac{19}{20}x, & \frac{400}{361} \leq x \leq \frac{800}{361}, \\
		 \frac{18}{19}, & \frac{800}{361} < x \leq \frac{200}{81}, \\
		\frac{542}{171} - \frac{9}{10}x, &  \frac{200}{81} < x \leq \frac{100}{27}, \\
		-\frac{28}{171}, & \frac{100}{27} < x, 
		\end{cases} \\ 
		F_0(x) &= G_0(x) =  \begin{cases}
		0, & x \leq 0, \\
		x, & 0 < x \leq 1, \\
		1, & 1 < x \leq \frac{400}{361}, \\
		\frac{361}{400}x, & \frac{400}{361} < x \leq \frac{800}{361}, \\
		2,  &  \frac{800}{361} < x \leq \frac{200}{81}, \\
		\frac{81}{100}x, &  \frac{200}{81} < x \leq \frac{100}{27}, \\
		3, & \frac{100}{27} < x, 
		\end{cases} 
	\end{align}
	\end{subequations}
with $\alpha(x)$ given by 
	\begin{equation} \label{eq:exInitial}
		\alpha(x) = \begin{cases}
		0, & x \leq \frac{1434}{361}, \\
		-\frac{478}{127} + \frac{361}{381}x, & \frac{1434}{361} < x \leq \frac{6879}{1444}, \\
		\frac{361}{1705}x -\frac{441}{1705}, &\frac{6879}{1444} < x \leq 5, \\
		\frac{4}{5}, &5 < x. 
		\end{cases}
	\end{equation}
	
After applying $L$ from Definition~\ref{def:MapL}, we obtain
\begin{align*}
		y_0(\xi) &= \begin{cases}
		\xi, & \xi \leq 0, \\
		\frac{1}{2}\xi, &0 < \xi \leq 2, \\
		\xi - 1, & 2 < \xi \leq \frac{761}{361}, \\
		\frac{400}{761}\xi, &  \frac{761}{361} < \xi \leq \frac{1522}{361}, \\
		\xi - 2, & \frac{1522}{361} < \xi \leq \frac{362}{81}, \\
		\frac{100}{181} \xi, & \frac{362}{81} < \xi \leq \frac{181}{27}, \\
		\xi - 3, & \frac{181}{27} < \xi, 
		\end{cases}\\
		U_0(\xi) &= \begin{cases}
		3, & \xi \leq 0, \\
		3 - \frac{1}{2}\xi, & 0 < \xi \leq 2, \\ 
		\!\!2, & 2 < \xi \leq \frac{761}{361}, \\ 
		\!\! \frac{58}{19} - \frac{380}{761}\xi, &  \frac{761}{361} < \xi \leq \frac{1522}{361}, \\
		\frac{18}{19}, & \frac{1522}{361} < \xi \leq \frac{362}{81}, \\
		\frac{542}{171} - \frac{90}{181}\xi, &  \frac{362}{81} < \xi \leq \frac{181}{27}, \\
		-\frac{28}{171}, &  \frac{181}{27} < \xi, 
		\end{cases} \\
		V_0(\xi) = H_0(\xi) &= \begin{cases}
		0, & \xi \leq 0, \\
		\frac{1}{2}\xi, &0 < \xi \leq 2, \\
		1, & 2 < \xi \leq \frac{761}{361}, \\
		\frac{361}{761}\xi, & \frac{761}{361} < \xi \leq \frac{1522}{361}, \\
		2, & \frac{1522}{361} < \xi \leq \frac{362}{81}, \\
		\frac{81}{181}\xi, & \frac{362}{81} < \xi \leq \frac{181}{27}, \\
		3, &  \frac{181}{27} < \xi, 
		\end{cases}
	\end{align*}
		and from \eqref{eq:waveBreakingFunc} we have 
	\begin{align}\label{tauex}
		\tau(\xi) &= \begin{cases}
		2, & \xi \in [0, 2], \\
		\frac{40}{19}, & \xi \in [\frac{761}{361}, \frac{1522}{361}], \\
		\frac{20}{9}, & \xi \in [\frac{362}{81}, \frac{181}{27}], \\
		\infty, & \text{otherwise}. 
		\end{cases}
	\end{align}
	Thus, no wave breaking occurs for $t < 2$. Hence, solving \eqref{eq:intLagrSystem} for $t \in [0, 2)$ yields
\begin{align} \label{eq:LagrSol1}
		y(t, \xi) &= \begin{cases}
		\xi - \frac{3}{8}t^2 + 3t, & \xi \leq 0, \\
		\frac{1}{8}(t-2)^2\xi - \frac{3}{8}t^2 + 3t, & 0 < \xi \leq 2, \\
		\xi -\frac{1}{8}t^2 + 2t - 1, & 2 < \xi \leq \frac{761}{361}, \\
		\frac{361}{3044}(t-\frac{40}{19})^2\xi - \frac{3}{8}t^2 + \frac{58}{19}t, & \frac{761}{361}< \xi \leq \frac{1522}{361}, \\
		\xi + \frac{1}{8}t^2 + \frac{18}{19}t - 2, & \frac{1522}{361} < \xi \leq \frac{362}{81}, \\
		\frac{81}{724}(t-\frac{20}{9})^2\xi - \frac{3}{8}t^2 + \frac{542}{171}t, &  \frac{362}{81} < \xi \leq \frac{181}{27}, \\
		\xi  + \frac{3}{8}t^2 - \frac{28}{171}t - 3, &  \frac{181}{27} \leq \xi, 
		\end{cases}  \\
		U(t, \xi) &= \begin{cases}
		- \frac{3}{4}t + 3, & \xi \leq 0, \\
		\frac{1}{4}(t-2)\xi - \frac{3}{4}t + 3, & 0 < \xi \leq 2, \\
		-\frac{1}{4}t + 2, & 2 < \xi  \leq \frac{761}{361}, \\
		\frac{361}{1522}(t - \frac{40}{19})\xi - \frac{3}{4}t + \frac{58}{19}, &  \frac{761}{361} < \xi \leq \frac{1522}{361}, \\
		\frac{1}{4}t + \frac{18}{19}, &  \frac{1522}{361} < \xi \leq \frac{362}{81}, \\
		\frac{81}{362}(t-\frac{20}{9})\xi - \frac{3}{4}t + \frac{542}{171}, &  \frac{362}{81} < \xi \leq \frac{181}{27}, \\
		\frac{3}{4}t -\frac{28}{171}, & \frac{181}{27} < \xi, 
		\end{cases} \nonumber\\
		V(t, \xi) &= H(t,\xi)=V_0(\xi) \nonumber. 
	\end{align}
	Note that the function $y(t, \cdot)$ is strictly increasing for each $t\in [0,2)$, hence invertible. Thus, Definition~\ref{def:MapM} implies that for $t\in[0,2)$ the pair $(u,F)(t,\cdot)$ is given by 
	\begin{subequations}\label{eq:exEul1}
		\begin{align}
		u(t, x) &= \begin{cases}
		-\frac{3}{4}t + 3, & x \leq -\frac{3}{8}t^2 + 3t, \\
		\frac{4x -3t - 12}{2(t-2)}, & -\frac{3}{8}t^2 + 3t < x \leq -\frac{1}{8}t^2 + 2t +1, \\
		-\frac{1}{4}t + 2, & -\frac{1}{8}t^2 + 2t +1 < x \leq -\frac{1}{8}t^2 + 2t + \frac{400}{361}, \\
		\frac{2(361x - 266t - 1160)}{19(19t-40)}, & -\frac{1}{8}t^2 + 2t + \frac{400}{361} < x \leq \frac{1}{8}t^2 + \frac{18}{19}t + \frac{800}{361}, \\
		\frac{1}{4}t + \frac{18}{19}, & \frac{1}{8}t^2 + \frac{18}{19}t + \frac{800}{361} < x \leq \frac{1}{8}t^2 + \frac{18}{19}t + \frac{200}{81}, \\
		\frac{3078x - 2313t - 10840}{171(9t-20)}, & \frac{1}{8}t^2 + \frac{18}{19}t + \frac{200}{81} < x \leq \frac{3}{8}t^2 - \frac{28}{171}t + \frac{100}{27}, \\
		\frac{3}{4}t - \frac{28}{171}, & \frac{3}{8}t^2 - \frac{28}{171}t + \frac{100}{27} < x, 
		\end{cases} \\
		F(t, x) &= \begin{cases}
		0, & x \leq -\frac{3}{8}t^2 + 3t, \\
		\frac{4}{(t-2)^2}(x+\frac{3}{8}t^2 -3t), & -\frac{3}{8}t^2 + 3t < x \leq -\frac{1}{8}t^2 + 2t +1, \\
		1, & -\frac{1}{8}t^2 + 2t +1 < x \leq -\frac{1}{8}t^2 + 2t + \frac{400}{361}, \\
		\frac{4}{(t-\frac{40}{19})^2}(x + \frac{3}{8}t^2 - \frac{58}{19}t), & -\frac{1}{8}t^2 + 2t + \frac{400}{361} < x \leq \frac{1}{8}t^2 + \frac{18}{19}t + \frac{800}{361}, \\
		2, & \frac{1}{8}t^2 + \frac{18}{19}t + \frac{800}{361} < x \leq \frac{1}{8}t^2 + \frac{18}{19}t + \frac{200}{81}, \\
		\frac{4}{(t-\frac{20}{9})^2}(x + \frac{3}{8}t^2 - \frac{542}{171}t), & \frac{1}{8}t^2 + \frac{18}{19}t + \frac{200}{81} < x \leq \frac{3}{8}t^2 - \frac{28}{171}t + \frac{100}{27}, \\
		3, & \frac{3}{8}t^2 - \frac{28}{171}t + \frac{100}{27} < x.  
		\end{cases}
	\end{align}
	\end{subequations}
	
	At $t=2$ wave breaking takes place for all $\xi\in [0,2]$, cf. \eqref{tauex}, and in particular, $\lim_{t \uparrow 2}y(t, \xi) = \frac{9}{2}$ for all $\xi \in [0, 2]$ by \eqref{eq:LagrSol1}, which yields $\alpha(y(\tau(\xi), \xi)) =\frac{1}{2}$. Thus, after computing $V(2, \cdot)$ and using \eqref{eq:intLagrSystem}, we find that the solution in Lagrangian coordinates for $ t\in [2, \frac{40}{19})$ reads  
\begin{align}\label{eq:LagrSol2}
	y(t, \xi) &= \begin{cases}
	\xi - \frac{5}{16}t^2 + \frac{11}{4}t + \frac{1}{4}, & \xi \leq 0, \\
	\frac{1}{16}(t-2)^2\xi  - \frac{5}{16}t^2 + \frac{11}{4}t + \frac{1}{4}, & 0 < \xi \leq 2, \\
	\xi - \frac{3}{16}t^2 + \frac{9}{4}t - \frac{5}{4}, &2 < \xi \leq \frac{761}{361}, \\
	\frac{361}{3044}(t-\frac{40}{19})^2\xi - \frac{7}{16}t^2 + \frac{251}{76}t -\frac{1}{4}, & \frac{761}{361} < \xi \leq\frac{1522}{361}, \\
	\xi + \frac{1}{16}t^2 + \frac{91}{76}t -\frac{9}{4}, & \frac{1522}{361} < \xi \leq \frac{362}{81}, \\
	\frac{81}{724}(t-\frac{20}{9})^2\xi - \frac{7}{16}t^2 + (\frac{371}{171}+\frac{5}{4})t - \frac{1}{4}, &\frac{362}{81} < \xi \leq \frac{181}{27}, \\
	\xi + \frac{5}{16}t^2 + \frac{59}{684}t -\frac{13}{4}, &  \frac{181}{27} < \xi, 
	\end{cases} \\
	U(t, \xi) &= \begin{cases}
	-\frac{5}{8}t + \frac{11}{4}, & \xi \leq 0, \\
	\frac{1}{8}(t-2)\xi -\frac{5}{8}t + \frac{11}{4}, &0 < \xi \leq 2, \\
	-\frac{3}{8}t + \frac{9}{4}, & 2 < \xi \leq \frac{761}{361}, \\
	\frac{361}{1522}(t-\frac{40}{19})\xi-\frac{7}{8}t + \frac{251}{76}, & \frac{761}{361} < \xi 		\leq\frac{1522}{361}, \\
	\frac{1}{8}t + \frac{91}{76}, & \frac{1522}{361} < \xi \leq \frac{362}{81}, \\
	\frac{81}{362}(t-\frac{20}{9})\xi -\frac{7}{8}t + \frac{371}{171} + \frac{5}{4}, &\frac{362}{81} < \xi \leq \frac{181}{27}, \\
	\frac{5}{8}t + \frac{59}{684}, & \frac{181}{27} < \xi, 
	\end{cases} \nonumber \\ 
		V(t, \xi) &= \begin{cases}
		0, & \xi \leq 0, \\
		\frac{1}{4}\xi, & 0 < \xi \leq 2, \\
		\frac{1}{2}, & 2 < \xi \leq \frac{761}{361}, \\
		\frac{361}{761}\xi - \frac{1}{2}, & \frac{761}{361} < \xi \leq \frac{1522}{361}, \\
		\frac{3}{2}, & \frac{1522}{361} < \xi \leq \frac{362}{81}, \\
		\frac{81}{181}\xi - \frac{1}{2}, & \frac{362}{81} < \xi \leq \frac{181}{27}, \\
		\frac{5}{2}, & \frac{181}{27} < \xi,
		\end{cases} \nonumber \\
		H(t,\xi)&= V_0(\xi) \nonumber.
	\end{align}
	
	As before, the function $y(t, \cdot)$ is strictly increasing and therefore invertible for all $t\in (2, \frac{40}{19})$. Hence, it follows by Definition \eqref{def:MapM} that the solution in Eulerian coordinates for $t\in [2, \frac{40}{19})$ takes the form 
	\begin{subequations}\label{eq:exEul2}
	\begin{align}
		u(t, x) &= \begin{cases}
		-\frac{5}{8}t + \frac{11}{4} & x \leq -\frac{5}{16}t^2 + \frac{11}{4}t + \frac{1}{4}, \\
		\frac{4x -3t - 12}{2(t-2)} &  -\frac{5}{16}t^2 + \frac{11}{4}t + \frac{1}{4} < x \leq -\frac{3}{16}t^2 + \frac{9}{4}t + \frac{3}{4}, \\
		-\frac{3}{8}t + \frac{9}{4} & -\frac{3}{16}t^2 + \frac{9}{4}t + \frac{3}{4} < x \leq -\frac{3}{16}t^2 + \frac{9}{4}t + \frac{1239}{1444}, \\
		\frac{2}{(t-\frac{40}{19})}(x - \frac{111}{152}t - \frac{4659}{1444}) &  -\frac{3}{16}t^2 + \frac{9}{4}t + \frac{1239}{1444} < x \leq \frac{1}{16}t^2 + \frac{91}{76}t + \frac{2839}{1444}, \\
		\frac{1}{8}t + \frac{91}{76} & \frac{1}{16}t^2 + \frac{91}{76}t + \frac{2839}{1444} < x \leq \frac{1}{16}t^2 + \frac{91}{76}t + \frac{719}{324}, \\
		\frac{2}{(t-\frac{20}{9})}(x - \frac{1009}{1368}t - \frac{21851}{6156}) & \frac{1}{16}t^2 + \frac{91}{76}t + \frac{719}{324} < x \leq \frac{5}{16}t^2 + \frac{59}{684}t + \frac{373}{108}, \\ 
		\frac{5}{8}t + \frac{59}{684} & \frac{5}{16}t^2 + \frac{59}{684}t + \frac{373}{108} <x, 
		\end{cases}\\
		F(t, x) &= \begin{cases}
		0 & x \leq -\frac{5}{16}t^2 + \frac{11}{4}t + \frac{1}{4}, \\
		\frac{4}{(t-2)^2}(x + \frac{5}{16}t^2 - \frac{11}{4}t -\frac{1}{4}) &  -\frac{5}{16}t^2 + \frac{11}{4}t + \frac{1}{4} < x \leq -\frac{3}{16}t^2 + \frac{9}{4}t + \frac{3}{4}, \\
		\frac{1}{2} & -\frac{3}{16}t^2 + \frac{9}{4}t + \frac{3}{4} < x \leq -\frac{3}{16}t^2 + \frac{9}{4}t + \frac{1239}{1444}, \\
		\frac{4}{(t-\frac{40}{19})^2}(x + \frac{5}{16}t^2 -\frac{211}{76}t - \frac{439}{1444}) &  -\frac{3}{16}t^2 + \frac{9}{4}t + \frac{1239}{1444} < x \leq \frac{1}{16}t^2 + \frac{91}{76}t + \frac{2839}{1444}, \\
		\frac{3}{2} & \frac{1}{16}t^2 + \frac{91}{76}t + \frac{2839}{1444} < x \leq \frac{1}{16}t^2 + \frac{91}{76}t + \frac{719}{324}, \\
		\frac{4}{(t-\frac{20}{9})^2}(x + \frac{5}{16}t^2 - \frac{653}{228}t - \frac{119}{324}) & \frac{1}{16}t^2 + \frac{91}{76}t + \frac{719}{324} < x \leq \frac{5}{16}t^2 + \frac{59}{684}t + \frac{373}{108}, \\ 
		\frac{5}{2} & \frac{5}{16}t^2 + \frac{59}{684}t + \frac{373}{108} <x. 
	\end{cases}
	\end{align}
	\end{subequations}
	
	At $t=\frac{40}{19}$ wave breaking occurs for all $\xi\in [\frac{761}{361}, \frac{1522}{361}]$, cf. \eqref{tauex} and in particular,  $\displaystyle \lim_{t\uparrow \frac{40}{19}} y(t, \xi) = \frac{6879}{1444} \approx 4.764$ for all  $\xi \in[\frac{761}{361}, \frac{1522}{361}]$ by  \eqref{eq:LagrSol2}, implying that  $\alpha(y(\tau(\xi), \xi)) = \frac{3}{4}$. Computing $V(\frac{40}{19}, \cdot)$ and using  \eqref{eq:intLagrSystem}, therefore reveals that the solution in Lagrangian coordinates for $t\in [\frac{40}{19}, \frac{20}{9})$ reads 
	\begin{align*}
		y(t, \xi) &= \begin{cases}
		\xi -\frac{7}{32}t^2 + \frac{179}{76}t + \frac{961}{1444}, & \xi \leq 0, \\
		\frac{1}{16}(t-2)^2\xi  -\frac{7}{32}t^2 + \frac{179}{76}t + \frac{961}{1444}, &0 < \xi \leq 2, \\
		\xi -\frac{3}{32}t^2 +\frac{141}{76}t - \frac{1205}{1444}, & 2 < \xi \leq  \frac{761}{361}, \\
		\frac{361}{12176}(t-\frac{40}{19})^2\xi -\frac{5}{32}t^2 +\frac{161}{76}t +\frac{1439}{1444}, & \frac{761}{361} \leq \xi < \frac{1522}{361}, \\
		\xi - \frac{1}{32}t^2 + \frac{121}{76}t  - \frac{3849}{1444}, & \frac{1522}{361} < \xi \leq \frac{362}{81}, \\
		\frac{81}{724}(t-\frac{20}{9})^2\xi -\frac{17}{32}t^2 + \frac{2609}{684}t -\frac{961}{1444}, &\frac{362}{81} < \xi \leq \frac{181}{27}, \\
		\xi + \frac{7}{32}t^2 + \frac{329}{684}t -\frac{5293}{1444}, & \frac{181}{27} \leq \xi, 
		\end{cases} \\ 
		U(t, \xi) &= \begin{cases}
		-\frac{7}{16}t +\frac{179}{76}, & \xi \leq 0, \\
		\frac{1}{8}(t-2)\xi -\frac{7}{16}t +\frac{179}{76}, & 0 < \xi \leq 2, \\
		-\frac{3}{16}t + \frac{141}{76}, & 2 < \xi \leq \frac{761}{361}, \\
		\frac{361}{6088}(t-\frac{40}{19})\xi - \frac{5}{16}t + \frac{161}{76}, & \frac{761}{361} < \xi \leq \frac{1522}{361}, \\
		- \frac{1}{16}t + \frac{121}{76}, &  \frac{1522}{361} < \xi \leq \frac{362}{81}, \\
		\frac{81}{362}(t-\frac{20}{9})\xi  -\frac{17}{16}t + \frac{2609}{684}, &\frac{362}{81} < \xi \leq \frac{181}{27}, \\
		\frac{7}{16}t + \frac{329}{684}, & \frac{181}{27} \leq \xi, 
		\end{cases}\\
		V(t, \xi) &= \begin{cases}
		0, & \xi \leq 0, \\ 
		\frac{1}{4}\xi, & 0 < \xi \leq 2, \\
		\frac{1}{2}, & 2 < \xi \leq \frac{761}{361}, \\
		\frac{361}{3044}\xi + \frac{1}{4}, & \frac{761}{361} \leq \xi < \frac{1522}{361}, \\
		\frac{3}{4}, & \frac{1522}{361} < \xi \leq \frac{362}{81}, \\
		\frac{81}{181}\xi - \frac{5}{4},  &\frac{362}{81} < \xi \leq \frac{181}{27}, \\
		\frac{7}{4}, & \frac{181}{27} \leq \xi,
		\end{cases}\\
		H(t,\xi)&=V_0(\xi).
	\end{align*}
	
	Using Definition~\ref{def:MapM} once more and the fact that $y(t,\cdot)$ is strictly increasing for each $t\in (\frac{40}{19}, \frac{20}{9})$, yields the following solution in Eulerian coordinates for $t\in [\frac{40}{19}, \frac{20}{9})$, 
	\begin{subequations}\label{eq:exEul3}
	\begin{align}
		u(t, x) &= \begin{cases}
		-\frac{7}{16}t + \frac{179}{76}, & x \leq x_1(t), \\
		\frac{2}{t-2}(x-\frac{225}{304}t -\frac{2181}{722}), & x_1(t) < x \leq x_2(t), \\
		-\frac{3}{16}t +\frac{141}{76}, & x_2(t) < x \leq x_3(t), \\
		\frac{2}{t-\frac{40}{19}}(x-\frac{111}{152}t -\frac{4659}{1444}), & x_3(t) < x \leq x_4(t), \\
		-\frac{1}{16}t +\frac{121}{76}, & x_4(t) < x\leq x_5(t), \\ 
		\frac{2}{t-\frac{20}{9}}\left(x-\frac{497}{684}t - \frac{417869}{116964}\right)\!, &x_5(t) < x \leq x_6(t), \\
		\frac{7}{16}t + \frac{329}{684}, & x_6(t) < x, 
		\end{cases} \\
		F(t, x) &= \begin{cases}
		0, & x \leq x_1(t), \\
		\frac{4}{(t-2)^2}(x+\frac{7}{32}t^2 -\frac{179}{76}t -\frac{961}{1444}), & x_1(t) < x \leq x_2(t), \\
	\frac{1}{2}, & x_2(t) < x \leq x_3(t), \\
	\frac{4}{(t-\frac{40}{19})^2}\left(x +\frac{7}{32}t^2 -\frac{181}{76}t -\frac{1039}{1444}\right)\!, & x_3(t) < x \leq x_4(t), \\ 
	\!\! \frac{3}{4}, & x_4(t) < x \leq x_5(t), \\
	\frac{4}{(t-\frac{20}{9})^2} \left(x + \frac{7}{32}t^2 - \frac{553}{228}t +\frac{961}{1444} -\frac{125}{81}\right)\!, & x_5(t) < x \leq x_6(t), \\
	\frac{7}{4}, & x_6(t) < x, 
	\end{cases}
	\end{align}
	\end{subequations}
	where 
	\begin{align*}
		x_1(t) &= y(t, 0) = -\frac{7}{32}t^2 + \frac{179}{76}t + \frac{961}{1444}, \\
		x_2(t) &= y(t, 2) = -\frac{3}{32}t^2 +\frac{141}{76}t +\frac{1683}{1444}, \\
		x_3(t) &= y\!\left(\!t, \frac{761}{361}\right) = -\frac{3}{32}t^2 +\frac{141}{76}t + \frac{1839}{1444}, \\
		x_4(t) &= y\!\left(\!t, \frac{1522}{361}\right) =  -\frac{1}{32}t^2 + \frac{121}{76}t + \frac{2239}{1444}, \\
		x_5(t) &=  y \! \left(\!t, \frac{362}{81}\right) = -\frac{1}{32}t^2 + \frac{121}{76}t + \frac{200}{81} - \frac{961}{1444}, \\
		x_6(t) &= y \! \left(\!t, \frac{181}{27} \right) = \frac{7}{32}t^2 + \frac{329}{684}t + \frac{100}{27} - \frac{961}{1444}.
	\end{align*}
	
	It remains to compute the solution for $t \geq \frac{20}{9}$. At $t= \frac{20}{9}$ wave breaking takes place for all  $\xi \in [\frac{362}{81}, \frac{181}{27}]$, cf. \eqref{tauex}, and it therefore holds that \small $\displaystyle \lim_{t \uparrow \frac{20}{9}}y(t, \xi) = \frac{6005}{1026}-\frac{961}{1444} \approx 5.182 $\normalsize, which yields $\alpha(y(\tau(\xi), \xi))=\frac{4}{5}$. Thus, after computing $V(\frac{20}{9}, \cdot)$ and using \eqref{eq:intLagrSystem}, we find that the solution in Lagrangian coordinates for $t\in [\frac{20}{9}, \infty)$ reads
\begin{align*}
y(t, \xi) &= \begin{cases}
	\xi - \frac{19}{160}t^2 + \frac{1307}{684}t + \frac{40}{81} + \frac{961}{1444}, & \xi \leq 0, \\
	\frac{1}{16}(t-2)^2\xi - \frac{19}{160}t^2 + \frac{1307}{684}t + \frac{40}{81} + \frac{961}{1444}, & 0 < \xi \leq 2, \\
	\xi + \frac{1}{160}t^2 + \frac{965}{684}t +\frac{40}{81} - \frac{1205}{1444}, & 2 < \xi \leq \frac{761}{361},  \\
	\frac{361}{12176}(t-\frac{40}{19})^2\xi -\frac{9}{160}t^2 + \frac{1145}{684}t + \frac{40}{81} + \frac{1439}{1444}, & \frac{761}{361} \leq \xi < \frac{1522}{361},\\
	\xi + \frac{11}{160}t^2 + \frac{785}{684}t + \frac{40}{81} - \frac{3849}{1444}, &\frac{1522}{361} < \xi \leq \frac{362}{81}, \\
	\frac{81}{3620}(t-\frac{20}{9})^2\xi - \frac{1}{32}t^2 + \frac{121}{76}t + \frac{200}{81} - \frac{961}{1444}, & \frac{362}{81} < \xi \leq \frac{181}{27}, \\
	\xi + \frac{19}{160}t^2 + \frac{211}{228}t -\frac{40}{81} -\frac{5293}{1444}, & \frac{181}{27} \leq \xi,
	\end{cases} \\
	U(t, \xi) &= \begin{cases}
	-\frac{19}{80}t + \frac{1307}{684}, & \xi \leq 0, \\
	\frac{1}{8}(t-2)\xi -\frac{19}{80}t + \frac{1307}{684}, & 0 < \xi \leq 2, \\
	\frac{1}{80}t + \frac{965}{684}, & 2 < \xi \leq \frac{761}{361},  \\
	\frac{361}{6088}(t-\frac{40}{19})\xi -\frac{9}{80}t + \frac{1145}{684}, & \frac{761}{361} \leq \xi < \frac{1522}{361},\\
	\frac{11}{80}t + \frac{785}{684}, & \frac{1522}{361} < \xi \leq \frac{362}{81}, \\
	\frac{81}{1810}(t-\frac{20}{9})\xi -\frac{1}{16}t + \frac{121}{76}, &\frac{362}{81} < \xi \leq \frac{181}{27}, \\
	\frac{19}{80}t + \frac{211}{228}, & \frac{181}{27} \leq \xi,
	\end{cases}\\
	V(t, \xi) &= \begin{cases} 
		\!0, & \xi \leq 0, \\ 
		\frac{1}{4}\xi, & 0 < \xi \leq 2, \\
		\frac{1}{2}, & 2 < \xi \leq \frac{761}{361}, \\
		\frac{361}{3044}\xi + \frac{1}{4}, & \frac{761}{361} \leq \xi < \frac{1522}{361}, \\
		\frac{3}{4}, & \frac{1522}{361} < \xi \leq \frac{362}{81}, \\
		\frac{81}{905}\xi + \frac{7}{20}, &\frac{362}{81} < \xi \leq \frac{181}{27}, \\
		\frac{19}{20}, & \frac{181}{27} \leq \xi,
		\end{cases}\\
		H(t,\xi)&=V_0(\xi).
\end{align*}

Applying \eqref{def:MapM} one last time, reveals that the solution in Eulerian coordinates takes the following form for $t\in [\frac{20}{9}, \infty)$  
\begin{subequations}\label{eq:exEul4}
\begin{align}
	u(t, x) &= \begin{cases}
	-\frac{19}{80}t + \frac{1307}{684}, & x \leq \bar{x}_1(t), \\
	\frac{2}{t-2}\left(x - \frac{9821}{13680}t -\frac{40}{81} -\frac{16741}{6498}\right), & \bar{x}_1(t) < x \leq \bar{x}_2(t), \\
	\frac{1}{80}t + \frac{965}{684}, & \bar{x}_2(t) < x \leq \bar{x}_3(t), \\
	\frac{2}{t-\frac{40}{19}}\left(x - \frac{983}{1368}t -\frac{40}{81} - \frac{35851}{12996}\right)\!, & \bar{x}_3(t) < x \leq \bar{x}_4(t), \\
	\frac{11}{80}t + \frac{785}{684}, & \bar{x}_4(t) < x \leq \bar{x}_5(t), \\
	\frac{2}{t-\frac{20}{9}}\left( x-\frac{497}{684}t -\frac{200}{81} - \frac{14341}{12996}\right)\!, & \bar{x}_5(t) < x \leq \bar{x}_6(t), \\
	\frac{19}{80}t + \frac{211}{228}, & \bar{x}_6(t) < x, 
	\end{cases} \\
	F(t, x) &= \begin{cases}
	0, & x \leq \bar{x}_1(t), \\
	\frac{4}{(t-2)^2}\left(x + \frac{19}{160}t^2 -\frac{1307}{684}t -\frac{40}{81} - \frac{961}{1444}\right)\!, & \bar{x}_1(t) < x \leq \bar{x}_2(t), \\
	\frac{1}{2}, & \bar{x}_2(t) < x \leq \bar{x}_3(t), \\
	\frac{4}{(t-\frac{40}{19})^2} \left(x + \frac{19}{160}t^2 - \frac{1325}{684}t -\frac{40}{81} -\frac{1039}{1444}\right) \!, & \bar{x}_3(t) < x \leq \bar{x}_4(t), \\
	\frac{3}{4}, & \bar{x}_4(t) < x \leq \bar{x}_5(t), \\
	\frac{4}{(t-\frac{20}{9})^2} \left(x + \frac{19}{160}t^2 - \frac{1355}{684}t -\frac{55}{27} + \frac{961}{1444} \right)\!, & \bar{x}_5(t) < x \leq \bar{x}_6(t), \\
	\frac{19}{20},  & \bar{x}_6(t) < x, 
	\end{cases}
\end{align}
\end{subequations}
where
\begin{align*}
	\bar{x}_1(t) &= y(t, 0) = -\frac{19}{160}t^2 + \frac{1307}{684}t + \frac{40}{81} + \frac{961}{1444}, \\
	\bar{x}_2(t) &= y(t, 2) = \frac{1}{160}t^2 + \frac{965}{684}t + \frac{40}{81} + \frac{1683}{1444}, \\
	\bar{x}_3(t)&= y\!\left(t, \frac{761}{361}\right) = \frac{1}{160}t^2 + \frac{965}{684}t + \frac{40}{81} + \frac{1839}{1444}, \\
	\bar{x}_4(t) &= y\!\left(t, \frac{1522}{361}\right) = \frac{11}{160}t^2 + \frac{785}{684}t + \frac{40}{81} + \frac{2239}{1444}, \\
	\bar{x}_5(t) &= y\!\left(t, \frac{362}{81} \right) =  \frac{11}{160}t^2 + \frac{785}{684}t + \frac{134}{27} - \frac{3849}{1444}, \\
	\bar{x}_6(t) &= y\!\left(t, \frac{181}{27}\right) =   \frac{19}{160}t^2 + \frac{211}{228}t + \frac{503}{81} - \frac{5293}{1444}. 
\end{align*}
Combining \eqref{eq:exEul1} and \eqref{eq:exEul2}--\eqref{eq:exEul4} finally yields the globally defined $\alpha$-dissipative solution $(u, F)(t, \cdot)$ with initial data \eqref{eq:exInitial2} and $\alpha$ given by \eqref{eq:exInitial}.  
\end{document}